\crefname{hypothesis}{Hypothesis}{Hypotheses}
\let\O\undefined
\let\I\undefined
\let\T\undefined
\DeclareMathOperator{\O}{O}
\DeclareMathOperator{\I}{I}
\DeclareMathOperator{\T}{T}
\DeclareMathOperator{\tr}{tr}
\definecolor{DukeBlue}{HTML}{001A57}
\definecolor{DarkRed}{rgb}{0.75, 0.0, 0.0}
\definecolor{DarkGreen}{rgb}{0.0, 0.5, 0.0}
\title{Semi-Riemannian Manifold Optimization}
\author{Tingran Gao\thanks{Department of Statistics and Committee on Computational and Applied Mathematics (CCAM), The University of Chicago, Chicago IL (\email{tingrangao@galton.uchicago.edu})}
\and Lek-Heng Lim \thanks{Department of Statistics and Committee on Computational and Applied Mathematics (CCAM), The University of Chicago, Chicago, IL (\email{lekheng@galton.uchicago.edu})}
\and Ke Ye \thanks{Academy of Mathematics and Systems Science, Chinese Academy of Sciences, Beijing, China (\email{keyk@amss.ac.cn})}}
\DeclareMathOperator*{\argmin}{argmin}
\newcommand{\GL}{\mathrm{GL}}
\let\E\undefined
\DeclareMathOperator{\E}{E}
\let\T\undefined
\DeclareMathOperator{\T}{T}
\let\N\undefined
\DeclareMathOperator{\N}{N}
\let\J\undefined
\DeclareMathOperator{\J}{J}
\newcommand{\SN}{\operatorname{SN}}
\newcommand{\SL}{\operatorname{SL}}
\newcommand{\Sp}{\operatorname{Sp}}
\newcommand{\Gr}{\operatorname{Gr}}
\begin{document}

\maketitle

\begin{abstract}
  We introduce in this paper a manifold optimization framework that utilizes \emph{semi-Riemannian} structures on the underlying smooth manifolds. Unlike in Riemannian geometry, where each tangent space is equipped with a positive definite inner product, a semi-Riemannian manifold allows the metric tensor to be \emph{indefinite} on each tangent space, i.e., possessing both positive and negative definite subspaces; differential geometric objects such as geodesics and parallel-transport can be defined on non-degenerate semi-Riemannian manifolds as well, and can be carefully leveraged to adapt Riemannian optimization algorithms to the semi-Riemannian setting. In particular, we discuss the metric independence of manifold optimization algorithms, and illustrate that the weaker but more general semi-Riemannian geometry often suffices for the purpose of optimizing smooth functions on smooth manifolds in practice.
\end{abstract}

\begin{keywords}
  manifold optimization, semi-Riemannian geometry, degenerate submanifolds, Lorentzian geometry, steepest descent, conjugate gradient, Newton's method, trust region method
\end{keywords}

\begin{AMS}
  90C30, 53C50, 53B30, 49M05, 49M15
\end{AMS}

\section{Introduction}
\label{sec:intro}

Manifold optimization \cite{EAS1998,AMS2009} is a class of techniques for solving optimization problems of the form
\begin{equation}
\label{eq:man-opt-prob}
  \min_{x\in \mathcal{M}}f \left( x \right)
\end{equation}
where $\mathcal{M}$ is a (typically nonlinear and nonconvex) manifold and $f:\mathcal{M}\rightarrow\mathbb{R}$ is a smooth function over $\mathcal{M}$. These techniques generally begin with endowing the manifold $\mathcal{M}$ with a \emph{Riemannian structures}, which amounts to specifying a smooth family of inner products on the tangent spaces of $\mathcal{M}$, with which analogies of differential quantities such as gradient and Hessian can be defined on $\mathcal{M}$ in parallel with their well-known counterparts on Euclidean spaces. This geometric perspective enables us to tackle a constrained optimization problem \cref{eq:man-opt-prob} using methodologies of unconstrained optimization, which becomes particularly beneficial when the constraints (expressed in $\mathcal{M}$) appear highly nonlinear and nonconvex.

The optimization problem \cref{eq:man-opt-prob} is certainly independent of the choice of Riemannian structures on $\mathcal{M}$; in fact, all critical points of $f$ on $\mathcal{M}$ are metric independent. From a differential geometric perspective, equipping the manifold with a Riemannian structure and studying the critical points of a generic smooth function is highly reminiscent of the classical Morse theory \cite{Milnor1963,Nicolaescu2011}, for which the main interest is to understand the topology of the underlying manifold; the topological information needs to be extracted using tools from differential geometry, but is certainly independent of the choice of Riemannian structures. It is thus natural to inquire the influence of different choices of Riemannian metrics on manifold optimization algorithms, which to our knowledge has never been explored in existing literature. This paper stems from our attempts at understanding the dependence of manifold optimization on Riemannian structure. It turns out that most technical tools for optimization on Riemannian manifolds can be extended to a larger class of metric structures on manifolds, namely, \emph{semi-Riemannian structures}. Just as a Riemannian metric is a smooth assignment of inner products to tangent spaces, a semi-Riemannian metric smoothly assigns to each tangent space a \emph{scalar product}, which is a symmetric bilinear form but without the constraint of positive definiteness; our major technical contribution in this paper is an optimization framework built upon the rich differential geometry in such weaker but more general metric structures, of which standard unconstrained optimization on Euclidean spaces and Riemannian manifold optimization are special cases. Though semi-Riemannian geometry has attracted generations of mathematical physicists for its effectiveness in providing space-time model in general relativity \cite{ONeill1983,Carroll2004}, to the best of our knowledge, the link with manifold optimization has never been explored.

A different yet strong motivation for investigating optimization problems on semi-Riemannian manifolds arises from the Riemannian geometric interpretation of interior point methods \cite{NN1994,Renegar2001}. For a twice differentiable and strongly convex function $f$ defined over an open convex domain $Q$ in an Euclidean space, denote by $\nabla f$ and $\nabla^2f$ for the gradient and Hessian of $f$, respectively. The strong convexity of $f$ ensures $\nabla^2 f \left( x \right)\succeq 0$ which defines a \emph{local inner product} $g_x \left( \cdot,\cdot \right):T_xQ\times T_xQ\rightarrow\mathbb{R}$ by
\begin{equation*}
  g_x \left(  v,w \right):=v^{\top}\left[\nabla^2 f \left( x \right)\right]w,\quad\forall v,w\in T_xQ.
\end{equation*}
With respect to this class of new local inner products, which can be interpreted as turning $Q$ into a Riemannian manifold $\left( Q,g \right)$, the gradient of $f$ takes the form
\begin{equation*}
  \tilde{\nabla}f \left( x \right)=\left[ \nabla f \left( x \right) \right]^{-1}\nabla f \left( x \right).
\end{equation*}
The negative manifold gradient $-\tilde{\nabla}f \left( x \right)=-\left[ \nabla f \left( x \right) \right]^{-1}\nabla f \left( x \right)$ coincides with the descent direction $\eta_x$ satisfying the Newton's equation
\begin{equation}
\label{eq:newton-equation}
  \left[\nabla^2 f \left( x \right)\right]\eta_x=-\nabla f \left( x \right)
\end{equation}
at $x\in M$. In other words, the Newton method, which is second order, can be interpreted as a first order method in the Riemannian setting. Such equivalence between first and second order methods under coordinate transformation is also known in other contexts such as \emph{natural gradient descent} in information geometry; see \cite{RM2015} and the references therein. Extending this geometric picture beyond the relatively well-understood case of strongly convex functions requires understanding optimization on semi-Riemannian manifolds as a first step; we expect the theoretical foundation laid out in this paper will shed light upon gaining deeper geometric insights on the convergence of non-convex optimization algorithms.

The rest of this paper is organized as follows. In \Cref{sec:preliminaries} we provide a brief but self-contained introduction to Riemannian optimization and semi-Riemannian geometry. \Cref{sec:semi-riem-optim} details the algorithmic framework of semi-Riemannian optimization, and proposes semi-Riemannian analogies of the Riemannian steepest descent and conjugate gradient algorithms; the metric independence of some second-order algorithms are also investigated. We specialize the general geometric framework to submanifolds in \Cref{sec:semi-riem-geom-sub}, in which we characterize the phenomenon (which does not exist in Riemannian geometry) of \emph{degeneracy} for induced semi-Riemannian structures, and identify several (nearly) non-degenerate examples to which our general algorithmic framework applies. We illustrate the utility of the proposed framework with several examples in \Cref{sec:numer-exper} and conclude with \Cref{sec:discussion}. More examples and some omitted proofs are deferred to the Supplementary Materials.

\section{Preliminaries}
\label{sec:preliminaries}

\subsection{Notations}
\label{sec:notations}

We denote a smooth manifold using $M$ or $\mathcal{M}$. Lower case letters such as $a,b,c$ or $x,y,z$ will be used to denote vectors or points on a manifold, depending on the context. We write $T\!M$ and $T^{*}M$ for the tangent and cotangent bundles of $M$, respectively. For a fibre bundle $E$, $\Gamma \left( E \right)$ will be used to denote smooth sections of this bundle. Unless otherwise specified, we use $\left\langle \cdot,\cdot \right\rangle$ or $g\in \Gamma \left( T^{*}\!M\otimes T^{*}\!M \right)$ to denote a semi-Riemannian metric. For a smooth function $f$, notations $Df$ and $D^2f$ stand for semi-Riemannian gradients and Hessians, respectively, when they exist; $\nabla f$ and $\nabla^2f$ will be reserved for Riemannian gradients and Hessians, respectively. More generally, $D$ will be used to denote the Levi-Civita connection on the semi-Riemannian manifold, while $\nabla$ denotes for the Levi-Civita connection on a Riemannian manifold. We denote anti-symmetric (i.e. skew-symmetric) matrices and symmetric matrices of size $n$-by-$n$ with $\mathrm{Skew}\left( \mathbb{R}^{n\times n} \right)$ and $\mathrm{Sym}\left( \mathbb{R}^{n\times n} \right)$, respectively. For a vector space $V$, $\bigwedge^k V$ and $S^kV$ stands for alternated or symmetrized $k$ copies of $V$, respectively.

\subsection{Riemannian Manifold Optimization}
\label{sec:riem-manif-optim}

As stated at the beginning of this paper, manifold optimization is a type of nonlinear optimization problems taking the form of \cref{eq:man-opt-prob}. The methodology of Riemannian optimization is to equip the smooth manifold $M$ with a Riemannian metric structure, i.e. positive definite bilinear forms $\left\langle \cdot,\cdot \right\rangle$ on the tangent spaces of $\mathcal{M}$ that varies smoothly on the manifold \cite{Milnor1973,doCarmo1992RG,Petersen2006}. The differentiable structure on $\mathcal{M}$ facilitates generalizing the concept of differentiable functions from Euclidean spaces to these nonlinear objects; in particular, notions such as gradient and Hessian are available on Riemannian manifolds and play the same role as their Euclidean space counterparts.

The algorithmic framework of Riemannian manifold optimization has been established and investigated in a sequence of works \cite{Gabay1982,Smith1994,EAS1998,AMS2009}. These algorithms typically builds upon the concepts of \emph{gradient}, the first-order differential operator $\nabla:C^1 \left( \mathcal{M} \right)\rightarrow \Gamma \left( TM \right)$ defined by
\begin{equation*}
  \left\langle \nabla f \left( x \right), X \right\rangle = Xf \left( x \right)\quad \forall X\in T_xM,
\end{equation*}
and \emph{Hessian}, the covariant derivative of the gradient operator defined by
\begin{equation*}
  \nabla^2f \left( X,Y \right)=XYf - \left(\nabla_XY\right)f\quad\forall X,Y\in \Gamma \left( TM \right)
\end{equation*}
as well as a \emph{retraction} $\mathrm{Retr}_x:T_x\mathcal{M}\rightarrow \mathcal{M}$ from each tangent plane $T_x\mathcal{M}$ to the manifold $\mathcal{M}$ such that (1) $\mathrm{Retr}_x \left( 0 \right)=x$ for all $x\in\mathcal{M}$, and (2) the differential map of $\mathrm{Retr}_x$ is identify at $0\in T_x\mathcal{M}$. On Riemannian manifolds it is natural to use the exponential mapping as the retraction, but any general map from tangent spaces to the Riemannian manifold suffices; in fact, the only requirement implied by conditions (1) and (2) is that the retraction map coincides with the exponential map up to the first order.

The optimality conditions for unconstrained optimization on Euclidean spaces in terms of gradients and Hessians can be naturally translated into the Riemannian manifold setting:
\begin{proposition}[\cite{BAC2016}, Proposition 1.1]
  \label{prop:riem-man-opt-optimality}
  A local optimum $x\in\mathcal{M}$ of Problem \cref{eq:man-opt-prob} satisfies the following necessary conditions:
\begin{enumerate}[(i)]
\item\label{item:1} $\nabla f \left( x \right)=0$ if $f:\mathcal{M}\rightarrow \mathbb{R}$ is first-order differentiable;
\item\label{item:2} $\nabla f \left( x \right)=0$ and $\nabla^2f \left( x \right)\succeq 0$ if $f:\mathcal{M}\rightarrow \mathbb{R}$ is second-order differentiable.
\end{enumerate}
\end{proposition}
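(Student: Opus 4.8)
The plan is to reduce the statement to elementary one-variable calculus, exploiting the basic fact that if $x$ is a local optimum of $f$ on $\mathcal{M}$, then $t=0$ is a local optimum of the real-valued composition $f\circ\gamma$ for every smooth curve $\gamma$ through $x$. The two necessary conditions then follow by unwrapping the first- and second-order optimality conditions for $f\circ\gamma$ and translating the resulting derivatives into intrinsic differential-geometric quantities via the definitions of gradient and Hessian given above. The only genuinely geometric input is the careful choice of curve in each case.

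For the first-order condition, I would fix an arbitrary $X\in T_x\mathcal{M}$ and choose any smooth curve $\gamma:(-\epsilon,\epsilon)\to\mathcal{M}$ with $\gamma(0)=x$ and $\dot\gamma(0)=X$; such a curve always exists. Since $t=0$ is a local minimum of the smooth function $f\circ\gamma$, its derivative must vanish there, and the chain rule gives $(f\circ\gamma)'(0)=Xf(x)=\langle\nabla f(x),X\rangle$ by the defining property of the gradient. As $X$ was arbitrary, $\langle\nabla f(x),X\rangle=0$ for every $X\in T_x\mathcal{M}$, and non-degeneracy of the metric forces $\nabla f(x)=0$.

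For the second-order condition the first-order part is already established, so it remains to prove $\nabla^2 f(x)\succeq 0$, and here the choice of curve is decisive: rather than an arbitrary curve I would take $\gamma$ to be the geodesic with $\gamma(0)=x$ and $\dot\gamma(0)=X$, so that the acceleration $\nabla_{\dot\gamma}\dot\gamma$ vanishes identically along $\gamma$. The second-order necessary condition for the one-variable function $f\circ\gamma$ yields $(f\circ\gamma)''(0)\ge 0$. The crux is then the identity $(f\circ\gamma)''(0)=\nabla^2 f(X,X)$: differentiating $(f\circ\gamma)'(t)=\dot\gamma(t)f$ once more and comparing with the definition $\nabla^2 f(X,Y)=XYf-(\nabla_XY)f$, the correction term $(\nabla_{\dot\gamma}\dot\gamma)f$ is precisely what the geodesic equation annihilates. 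Since $X\in T_x\mathcal{M}$ is arbitrary, this gives $\nabla^2 f(x)(X,X)\ge 0$ for all $X$, i.e. $\nabla^2 f(x)\succeq 0$.

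The one delicate step is this last identity relating the ordinary second derivative of $f\circ\gamma$ to the covariant Hessian, and it is exactly the reason a geodesic, rather than an arbitrary curve, must be used: along a general curve the acceleration term $(\nabla_{\dot\gamma}\dot\gamma)f$ would survive and spoil the clean positive-semidefiniteness conclusion. Everything else is a faithful transcription of the Euclidean optimality conditions, with the non-degeneracy of the metric playing the role that the standard inner product plays in Euclidean space when passing from the vanishing of $\langle\nabla f(x),\cdot\rangle$ to the vanishing of $\nabla f(x)$ itself. This observation also foreshadows the semi-Riemannian setting, where non-degeneracy, rather than positive definiteness, is what the first-order argument actually requires.
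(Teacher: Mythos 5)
Your proof is correct, but it is worth noting what it is being compared against: the paper does not prove this proposition itself (it is quoted from \cite{BAC2016}), and the natural benchmark is the paper's proof of the semi-Riemannian analogue, \cref{prop:semi-riem-man-opt-optimality}, which specializes to the Riemannian case. Your first-order step coincides with the paper's exactly: directional derivatives vanish along arbitrary curves, and non-degeneracy of the metric upgrades $\left\langle \nabla f \left( x \right), X \right\rangle = 0$ for all $X$ to $\nabla f \left( x \right) = 0$; your closing observation that only non-degeneracy, not positive definiteness, is used here is precisely the point the paper exploits in its generalization. The second-order step differs in a small but genuine way. The paper works in a geodesically convex neighborhood $U$, writes the exact Taylor formula with Lagrange remainder, $f \left( y \right) = f \left( x \right) + \tfrac{1}{2}\left[ D^2 f \left( \gamma \left( \xi \right) \right) \right]\left( \gamma' \left( \xi \right), \gamma' \left( \xi \right) \right)$ at an intermediate parameter $\xi$, and then lets $y \rightarrow x$, invoking smoothness (continuity of the Hessian) to conclude positive semidefiniteness at $x$. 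You instead apply the one-variable second-order necessary condition directly at $t = 0$ and use the identity $\left( f \circ \gamma \right)'' \left( 0 \right) = \nabla^2 f \left( x \right)\left( X, X \right)$ along a geodesic, correctly flagging that the geodesic equation is what annihilates the correction term $\left( \nabla_{\dot\gamma}\dot\gamma \right) f$ — for a general curve this term survives and the conclusion would fail. Your route is mildly more economical: it needs only local existence of geodesics rather than a geodesically convex neighborhood, requires no limiting argument, and correspondingly needs no continuity of $\nabla^2 f$ near $x$; the paper's mean-value-plus-limit formulation, on the other hand, transfers verbatim to the second-order \emph{sufficient} condition (\cref{prop:semi-riem-man-opt-second-optimality}), which is presumably why the authors phrase it that way. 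Both arguments share the same core mechanism — reduction to one-variable calculus along geodesics — so yours is best described as a cleaner variant of the paper's approach rather than a different one.
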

Following \cite{BAC2016}, we call $x\in\mathcal{M}$ satisfying condition (i) in \cref{prop:riem-man-opt-optimality} a \emph{(first-order) critical point} or \emph{stationary point}, and a point satisfying condition (i) in \cref{prop:riem-man-opt-optimality} a \emph{second-order critical point}.

The heart of Riemannian manifold optimization is to transform the nonlinear constrained optimization problem \cref{eq:man-opt-prob} into an unconstrained problem on the manifold $\mathcal{M}$. Following this methodology, classical unconstrained optimization algorithms such as gradient descent, conjugate gradients, Newton's method, and trust region methods have been generalized to Riemannian manifolds; see \cite[Chapter 8]{AMS2009}. For instance, the dynamics of the iterates $x_0,x_1,\cdots,x_k,\cdots$ generated by gradient descent algorithm on Riemannian manifolds essentially replaces the descent step $x_{k+1}=x_k-\nabla f \left( x_k \right)$ with its Riemannian counterpart $x_{k+1}=\mathrm{Retr}_{x_k}\left( -\nabla f \left( x_k \right) \right)$. Other differential geometric objects such as parallel-transport, Hessian, and curvature render themselves naturally en route to adapting other unconstrained optimization algorithms to the manifold setting. We refer interested readers to \cite{AMS2009} for more details.

\subsection{Semi-Riemannian Geometry}
\label{sec:semi-riem-geom}

Semi-Riemannian geometry differs from Riemannian geometry in that the bilinear form equipped on each tangent space can be indefinite. Classical examples include Lorentzian spaces and De Sitter spaces in general relativity; see e.g. \cite{ONeill1983,Carroll2004}. Although one may think of Riemannian geometry as a special case of semi-Riemannian geometry as all Riemannian metric tensors are automatically semi-Riemannian, the existence of a semi-Riemannian metric with nontrivial \emph{index} (see definition below) actually imposes additional constraints on the tangent bundle of the manifold and is thus often more restrictive---the tangent bundle should admit a non-trivial splitting into the direct sum of ``positive definite'' and ``negative definite'' sub-bundles. Nevertheless, such metric structures have found vast applications in and beyond understanding the geometry of spacetime, for instance, in the study of the regularity of optimal transport maps \cite{KMW2010,KM2010,AKM2011}.

\begin{definition}
A symmetric bilinear form $\left\langle\cdot,\cdot \right\rangle:V\times V\rightarrow \mathbb{R}$ on a vector space $V$ is non-degenerate if
\begin{equation*}
    \left\langle v,w\right\rangle =0\,\,\textrm{for all}\,\,w\in V\quad\Leftrightarrow\quad v=0.
  \end{equation*}
The \emph{index} $\nu\in\mathbb{Z}_{\geq 0}$ of a symmetric bilinear form on $V$ is the dimension of the maximum negative definite subspace of $V$; similarly, we denote $\pi\in\mathbb{Z}_{\geq 0}$ for the dimension of the maximum positive definite subspace of $V$. A \emph{scalar product} on a vector space $V$ is a non-degenerate symmetric bilinear form on $V$. The \emph{signature} of a scalar product on $V$ with index $\nu$ is a vector of length $\mathrm{dim} \left( V \right)$ with the first $\nu$ entries equaling $-1$ and the rest of entries equaling $1$. A subspace $W\subset V$ is said to be \emph{non-degenerate} if the restriction of the scalar product to $W$ is non-degenerate.
\end{definition}
The main difference between a scalar product and an inner product is that the former needs not possess positive definiteness. The main issue with this lack of positivity is the consequent lack of a meaningful definition for ``orthogonality'' --- a vector subspace may well be the orthogonal complement of itself: consider for example the subspace spanned by $\left( 1,1 \right)$ in $\mathbb{R}^2$ equipped with a scalar product with signature $\left( -, + \right)$. The same example illustrates that the property of non-degeneracy is not always inheritable by subspaces. Nonetheless, the following is true:
\begin{lemma}[Chapter 2, Lemma 23, \cite{ONeill1983}]
\label{lem:nondegeneracy}
  A subspace $W$ of a vector space $V$ is non-degenerate if and only if $V=W\oplus W^{\perp}$.
\end{lemma}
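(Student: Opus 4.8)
The plan is to reduce the biconditional to a single statement about the \emph{radical} of $W$, namely the subspace $W \cap W^{\perp}$ where $W^{\perp} = \{v \in V : \langle v,w\rangle = 0 \text{ for all } w \in W\}$, and then to settle the two implications with the help of a dimension count that crucially exploits the non-degeneracy of the ambient scalar product on $V$ (not merely on $W$).

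First I would record the basic dimension identity: for \emph{any} subspace $W \subseteq V$ one has $\dim W + \dim W^{\perp} = \dim V$. To prove it, consider the linear map $\Phi : V \to W^{*}$ sending $v$ to the functional $w \mapsto \langle v,w\rangle$ on $W$. Its kernel is precisely $W^{\perp}$ by definition. Non-degeneracy of the scalar product on $V$ makes the map $V \to V^{*}$, $v \mapsto \langle v,\cdot\rangle$, injective, hence an isomorphism in finite dimensions; composing with the surjective restriction $V^{*} \to W^{*}$ shows that $\Phi$ is surjective. The rank--nullity theorem then gives $\dim V = \dim \ker \Phi + \dim \operatorname{im}\Phi = \dim W^{\perp} + \dim W$.

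Next I would observe that $W \cap W^{\perp}$ is exactly the set of $w \in W$ with $\langle w,w'\rangle = 0$ for all $w' \in W$, i.e. the radical of the restricted form $\langle\cdot,\cdot\rangle|_{W\times W}$. By the very definition of non-degeneracy for a subspace, $W$ is non-degenerate if and only if this radical vanishes, that is, if and only if $W \cap W^{\perp} = 0$. With these two observations the lemma assembles quickly. For the reverse implication, $V = W \oplus W^{\perp}$ forces $W \cap W^{\perp} = 0$, whence $W$ is non-degenerate; this direction needs nothing beyond the radical observation. For the forward implication, non-degeneracy of $W$ gives $W \cap W^{\perp} = 0$, and combining this with the dimension identity through $\dim(W + W^{\perp}) = \dim W + \dim W^{\perp} - \dim(W \cap W^{\perp}) = \dim V$ shows $W + W^{\perp} = V$; the sum being direct, we conclude $V = W \oplus W^{\perp}$.

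The main obstacle is the dimension identity $\dim W + \dim W^{\perp} = \dim V$, and specifically the surjectivity of $\Phi$. This is the one place where the \emph{global} non-degeneracy hypothesis on $V$ is indispensable, and it tacitly relies on $V$ being finite-dimensional; everything else is a routine bookkeeping of the radical $W \cap W^{\perp}$.
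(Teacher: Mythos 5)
Your proof is correct and takes essentially the same route as the source the paper cites: the paper gives no proof of its own (it quotes the lemma from O'Neill, Chapter 2), and O'Neill's argument likewise combines the dimension identity $\dim W + \dim W^{\perp} = \dim V$ (his Lemma 22, established via the same map $v \mapsto \langle v,\cdot\rangle|_{W}$, whose surjectivity uses the non-degeneracy of the ambient scalar product and finite-dimensionality) with the observation that $W \cap W^{\perp}$ is the radical of the restricted form, so that $W$ is non-degenerate if and only if $W \cap W^{\perp} = 0$. Your explicit flagging of where global non-degeneracy and finite dimension enter is a faithful account of the only non-routine step.
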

\begin{definition}[Semi-Riemannian Manifolds]
  A \emph{metric tensor} $g$ on a smooth manifold $M$ is a symmetric non-degenerate $\left( 0,2 \right)$ tensor field on $M$ of constant index. A \emph{semi-Riemannian manifold} is a smooth manifold $M$ equipped with a metric tensor.
\end{definition}

\begin{example}[Minkowski Spaces $\mathbb{R}^{p,q}$]
  \label{exm:euc-pq}
  Consider the Euclidean space $\mathbb{R}^n$ and denote $\I_{p,q}$ for the $n$-by-$n$ diagonal matrix with the first $p$ diagonal entries equaling $-1$ and the rest $q=n-p$ entries equaling $1$, where $0\leq p\leq n$ and $n\geq 1$. For arbitrary $u,w\in\mathbb{R}^n$, define the bilinear form
  \begin{equation*}
    \left\langle u,v \right\rangle:=u^{\top}\I_{p,q} w.
  \end{equation*}
It is straightforward to verify that this bilinear form is nondegenerate on $\mathbb{R}^n$, and that such defined $\left( \mathbb{R}, \left\langle \cdot,\cdot \right\rangle \right)$ is a semi-Riemannian manifold. This space is known as the \emph{Minkowski space of signature $\left( p,q \right)$}.
\end{example}

\begin{example}
  \label{exm:matrix-semi-riem}
  Consider the vector space of matrices $\mathbb{R}^{n\times n}$, where $n\in\mathbb{N}$ and $n=p+q$, $p,q\in\mathbb{N}$. Define a bilinear form on $\mathbb{R}^{n\times n}$ by
  \begin{equation*}
    \left\langle A,B \right\rangle := \mathrm{Tr}\left( A^{\top}\I_{p,q}B \right),\quad\forall A,B\in \mathbb{R}^{n\times n}.
  \end{equation*}
  This bilinear form is non-degenerate on $\mathbb{R}^{n\times n}$, because for any $A,B\in\mathbb{R}^{n\times n}$ we have
  \begin{equation*}
    \mathrm{Tr}\left( A^{\top}\I_{p,q}B \right)=\mathrm{vec}\left( A \right)^{\top} \left( I_n\otimes I_{p,q} \right)\mathrm{vec}\left( B \right)
  \end{equation*}
  where $I_n$ is the identity matrix of size $n$-by-$n$, $\otimes$ denotes for the Kronecker product, and $\mathrm{vec}:\mathbb{R}^{n\times n}\rightarrow \mathbb{R}^{n^2}$ is the \emph{vectorization} operator that vertically stacks the columns of a matrix in $\mathbb{R}^{n\times n}$. The non-degeneracy then follows from \cref{exm:euc-pq}. This example gives rise to a semi-Riemannian structure for matrices in $\mathbb{R}^{n\times n}$.
\end{example}

The non-degeneracy of the semi-Riemannian metric tensor ensures that most classical constructions on Riemannian manifolds have their analogies on a semi-Riemannian manifold. Most fundamentally, the ``miracle of Riemannian geometry'' --- the existence and uniqueness of a canonical connection --- is beheld on semi-Riemannian manifolds as well. Quoting \cite[Theorem 11]{ONeill1983}, on a semi-Riemannian manifold $M$ there is a unique connection $D:\Gamma \left( M,TM \right)\rightarrow\Gamma \left( M,T^{\otimes 2}M \right)$ such that
\begin{equation}\label{eqn:torsion free}
  \left[ V,W \right]=D_VW-D_WV
\end{equation}
and
\begin{equation}\label{eqn:metric compactible}
  X \left\langle V,W \right\rangle = \left\langle D_XV, W \right\rangle+\left\langle V,D_XW \right\rangle
\end{equation}
for all $X,V,W\in \Gamma \left( M,TM \right)$. This connection is called the \emph{Levi-Civita connection} of $M$ and is characterized by the \emph{Koszul formula}
\begin{equation}\label{eqn:Koszul formula}
  \begin{aligned}
    2 \left\langle D_VW, X \right\rangle = &V \left\langle W,X \right\rangle+ W \left\langle X,V \right\rangle-X \left\langle V,W \right\rangle\\
    &-\left\langle V,\left[ W,X \right] \right\rangle+\left\langle W,\left[ X,V \right] \right\rangle+\left\langle X,\left[ V,W \right] \right\rangle\quad \forall X,V,W\in \Gamma \left( M,TM \right).
  \end{aligned}
\end{equation}
Geodesics, parallel-transport, and curvature of $M$ can be defined via the Levi-Civita connection on $M$ in an entirely analogous manner as on Riemannian manifolds.

Differential operators can be defined on semi-Riemannian manifolds much the same way as on Riemannian manifolds. For any $f\in C^1 \left( M \right)$, where $M$ is a semi-Riemannian manifold, the \emph{gradient} of $f$, denoted as $D f\in \Gamma \left( M,TM \right)$, is defined by the equality (c.f. \cite[Definition 47]{ONeill1983})
\begin{equation}
  \label{eq:semi-riem-grad}
  \left\langle D f,X \right\rangle=Xf,\quad\forall X\in\Gamma \left( M,TM \right).
\end{equation}
The \emph{Hessian} of $f\in C^2 \left( M \right)$ can be similarly defined, also similar to the Riemannian case (\cite[Definition 48, Lemma 49]{ONeill1983}), by $D^2f=D \left(D f\right)\in\Gamma \left( M,T^{*}M\otimes T^{*}M \right)$, or equivalently
\begin{equation}
  \label{eq:semi-riem-hess}
  D^2f \left( X,Y \right)=XYf-\left( D_XY \right)f,\quad\forall X,Y\in \Gamma \left( M,TM \right).
\end{equation}
Since the Levi-Civita connection on $M$ is torsion-free, $\nabla^2f$ is a symmetric $\left( 0,2 \right)$ tensor field on $M$, i.e.,
\begin{equation*}
  D^2f \left( X,Y \right)=D^2 f \left( Y,X \right),\quad\forall X,Y\in \Gamma \left( M,TM \right).
\end{equation*}

One way to compare the semi-Riemannian and Riemannian gradients and Hessians, when both metric structures exist on the same smooth manifold, is through their local coordinate expressions. In fact, the local coordinate expressions for the two types (Riemannian/semi-Riemannian) of differential operators can be unified as follows. Let $\left\{ x^1,\cdots,x^n \right\}$ be a local coordinate system around an arbitrary point $x\in\mathcal{M}$, and denote $g_{ij}$ and $h_{ij}$ for the components of the Riemannian and semi-Riemannian metric tensors, respectively; the Christoffel symbols will be denoted as $\phantom{}^g\Gamma_{ij}^{k}$ and $\phantom{}^h\Gamma_{ij}^{k}$, respectively. Direct computation reveals
\begin{equation}
  \label{eq:riem-semiriem-grad-hess}
  \begin{aligned}
    &\nabla f=g^{ij}\partial_jf\partial_i, \qquad \nabla^2f=\left(\partial_{ij}^2f-\phantom{}^g\Gamma_{ij}^k \partial_kf\right)\mathrm{d}x^i\otimes \mathrm{d}x^j,\\
    &D f=h^{ij}\partial_jf\partial_i, \qquad D^2f=\left(\partial_{ij}^2f-\phantom{}^h\Gamma_{ij}^k \partial_kf\right)\mathrm{d}x^i\otimes\mathrm{d}x^j.
  \end{aligned}
\end{equation}
Using the music isomorphism induced from the (Riemannian or semi-Riemannian) metric, the Hessians can be cast in the form of $\left( 2,0 \right)$-tensors on $\Gamma \left( T\!M\otimes T\!M \right)$ as
\begin{equation*}
  \begin{aligned}
    \left( \nabla^2f \right)^{\sharp}&=g^{i\ell}g^{jm}\left(\partial_{ij}^2f-\phantom{}^g\Gamma_{ij}^k \partial_kf\right)\partial_i\otimes \partial_m,\\
    \left( D^2f \right)^{\sharp}&=h^{i\ell}h^{jm}\left(\partial_{ij}^2f-\phantom{}^h\Gamma_{ij}^k \partial_kf\right)\partial_i\otimes \partial_m.
  \end{aligned}
\end{equation*}

\begin{remark}
  \label{rem:hessian-equiv}
  Notably, for any $x\in\mathcal{M}$, if we compute the Hessians $D^2f \left( x \right)$ and $\nabla^2f \left( x \right)$ in the corresponding geodesic normal coordinates centered at $x$, \cref{eq:riem-semiriem-grad-hess} implies that the two Hessians take the same coordinate form $\left(\partial_{ij}^2f\right)_{1\leq i,j\leq n}$ since both $\phantom{}^g\Gamma_{ij}^k$ and $\phantom{}^h\Gamma_{ij}^k$ vanish at $x$. For instance, $\mathbb{R}^n$ has the same geodesics under the Euclidean or Lorentzian metric (straight lines), and the standard coordinate system serves as geodesic normal coordinate system for both metrics; see \cref{exm:semi-riem-euc}. In particular, the notion of geodesic convexity \cite{Rapcsak1991,Udriste1994} is equivalent for the two different of metrics; this equivalence is not completely trivial by the well-known first and second order characterization (see e.g. \cite[Theorem 5.1]{Udriste1994} and \cite[Theorem 6.1]{Udriste1994}) since geodesics need not be the same under different metrics.
\end{remark}

\begin{proposition}
\label{prop:equivalence-geo-convexity}
  On a smooth manifold $\mathcal{M}$ admitting two different Riemannian or semi-Riemannian structures, an optimization problem is geodesic convex with respect to one metric if and only if it is also geodesic convex with respect to another.
\end{proposition}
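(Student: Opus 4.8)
The plan is to reduce geodesic convexity to a pointwise condition on the Hessian and then to compare the two Hessian tensors. The natural tool is the second-order characterization of geodesic convexity (\cite[Theorem 6.1]{Udriste1994}), which carries over verbatim to the semi-Riemannian setting: along any geodesic $\gamma$ one has $\left(f\circ\gamma\right)''(t)=D^2f\left(\gamma'(t),\gamma'(t)\right)$ because $D_{\gamma'}\gamma'=0$, so $f$ is geodesically convex with respect to a given metric exactly when its Hessian is positive semidefinite, as a symmetric $(0,2)$-tensor, at every point. The proposition therefore reduces to the assertion that $\nabla^2f(x)\succeq 0$ for all $x$ if and only if $D^2f(x)\succeq 0$ for all $x$.

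First I would fix a point $x$ and pass to geodesic normal coordinates, invoking \cref{rem:hessian-equiv}: in the $g$-normal (respectively $h$-normal) chart centered at $x$ the associated Christoffel symbols vanish at $x$, so both $\nabla^2f(x)$ and $D^2f(x)$ are represented there by the plain matrix of second partials $\left(\partial_{ij}^2f\right)$. Since positive semidefiniteness of a symmetric bilinear form is basis-independent, one may read it off from this matrix in each chart, and the proposition would follow once we know that these two matrices are positive semidefinite simultaneously.

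The main obstacle is that the two metrics need not share geodesics, so the $g$- and $h$-normal charts at $x$ are genuinely different, and the two matrices $\left(\partial_{ij}^2f\right)$ are expressed in different bases of $T_x\mathcal{M}$. Equivalently, in a single common chart the Hessians differ by a tensorial term proportional to $df$,
\begin{equation*}
  \nabla^2f - D^2f = \left(\phantom{}^h\Gamma_{ij}^k - \phantom{}^g\Gamma_{ij}^k\right)\partial_kf\;\mathrm{d}x^i\otimes\mathrm{d}x^j,
\end{equation*}
so the bilinear forms genuinely disagree away from critical points. The clean case is a critical point, where $df(x)=0$ forces $\nabla^2f(x)=D^2f(x)$ and semidefiniteness transfers immediately; I would record this first, since it already yields the metric-independence of second-order criticality. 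Extending to every point is the delicate step: one must show that the gradient-dependent correction above cannot reverse the sign of $\left(f\circ\gamma\right)''$ along any geodesic. I would attempt this by writing the $h$-normal coordinates as a smooth reparametrization of the $g$-normal coordinates with identity differential at $x$ and comparing the second-order jets of the two exponential maps; this is precisely where any structural relation between the two metrics---such as a shared geodesic spray, as in the flat $\mathbb{R}^n$ versus $\mathbb{R}^{p,q}$ comparison of \cref{rem:hessian-equiv}---would have to enter.
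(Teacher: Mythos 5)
Your reduction and setup are exactly those of the paper's proof: invoke the second-order characterization of geodesic convexity, compute each Hessian in its own geodesic normal chart centered at $x$ (where it is the bare matrix of second partials), and compare across the transition map between the two charts, using the Jacobian $J=\left( \partial y_j/\partial x_i \right)$ to relate the components of a tangent vector. The difference is in what happens at your ``main obstacle.'' The paper's proof completes the comparison in a single chain of equivalences, passing from $v^iv^j\,\partial^2 f/\partial x_i\partial x_j\left( x \right)\geq 0$ to the corresponding statement in the $y$-coordinates using the Jacobian alone --- that is, treating $\left( \partial^2_{ij}f \right)$ as if it transformed tensorially between the two normal charts. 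This silently drops the correction in
\begin{equation*}
  \frac{\partial^2 f}{\partial y_{\ell}\partial y_m}
  =\frac{\partial x_i}{\partial y_{\ell}}\frac{\partial x_j}{\partial y_m}\frac{\partial^2 f}{\partial x_i\partial x_j}
  +\frac{\partial^2 x_i}{\partial y_{\ell}\partial y_m}\frac{\partial f}{\partial x_i},
\end{equation*}
and the dropped term is precisely your gradient-proportional obstruction $\left( {}^{h}\Gamma^k_{ij}-{}^{g}\Gamma^k_{ij} \right)\partial_k f$, which vanishes only at critical points of $f$ or when the transition between the two normal charts is affine to second order at $x$ (equivalently, when the two connections agree at $x$). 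So the step at which you stopped is exactly the step the paper performs without justification; your proposal is incomplete as a proof, but its diagnosis is correct.

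Moreover, the missing step cannot be filled in general, so you were right not to force it. On an interval of $\mathbb{R}$ take $g=\mathrm{d}x^2$ and $h=e^{2\phi\left( x \right)}\mathrm{d}x^2$; an $h$-geodesic satisfies $\ddot{\gamma}=-\phi'\left( \gamma \right)\dot{\gamma}^2$, so $\left( f\circ\gamma \right)''=\left( f''-\phi'f' \right)\dot{\gamma}^2$. With $f\left( x \right)=x^2$ and $\phi\left( x \right)=x^2$ this equals $\left( 2-4x^2 \right)\dot{\gamma}^2<0$ for $\left| x \right|>1/\sqrt{2}$, so $f$ is $g$-geodesically convex but not $h$-geodesically convex, and likewise $D^2f\succeq 0$ fails where $\nabla^2f\succeq 0$ holds. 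The equivalence asserted in \cref{prop:equivalence-geo-convexity} is therefore valid only under extra hypotheses: at critical points (your observation, which does yield metric-independence of second-order criticality), or when the two metrics share geodesics so that the normal-chart transition is affine --- which covers the Euclidean-versus-Minkowski comparison of \cref{rem:hessian-equiv} and is the only situation the paper actually exploits later. Your closing remark that a shared geodesic spray ``would have to enter'' identifies the precise additional hypothesis needed, and in doing so exposes a genuine gap in the paper's own argument rather than merely in your attempt.
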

\begin{proof}
  Denote the two metric tensors on $\mathcal{M}$ as $g$ and $h$, respectively. Both $g$ and $h$ can be Riemannian or semi-Riemannian, respectively or simultaneously. For any $x\in \mathcal{M}$, let $x^1,\cdots,x^n$ and $y^1,\cdots,y^n$ be the geodesic coordinates around $x$ with respect to $g$ and $h$, respectively. Denote $J=\left( \partial y_j/\partial x_i \right)_{1\leq i,j\leq n}$ for the Jacobian of the coordinate transformation between the two normal coordinate systems. The coordinate expressions of a tangent vector $v\in T_x\mathcal{M}$ in the two normal coordinate systems are linked by (Einstein summation convention adopted)
  \begin{equation*}
    v=v^i \partial/\partial x_i = \tilde{v}^j \partial/ \partial y_j\quad\Leftrightarrow\quad v^i=\tilde{v}^j\partial x_i/\partial y_j.
  \end{equation*}
  Therefore
  \begin{equation*}
    \begin{aligned}
      &\left[\nabla^2f \left( x \right)\right] \left( v,v \right)\geq 0\quad\forall v\in T_x\mathcal{M}\\
      \Leftrightarrow \quad & v^iv^j \frac{\partial^2f}{\partial x_i\partial x_j} \left( x \right)\geq 0\quad \forall v^1,\cdots,v^n\in\mathbb{R}\\
      \Leftrightarrow \quad & \tilde{v}^{\ell} \frac{\partial x_i}{\partial y_{\ell}}\tilde{v}^m \frac{\partial x_j}{\partial y_m} \frac{\partial^2f}{\partial y_i\partial y_j}\geq 0\quad \forall \tilde{v}^1,\cdots,\tilde{v}^n\in\mathbb{R}\\
      \Leftrightarrow \quad &\left[D^2f \left( x \right)\right] \left( v,v \right)\geq 0\quad\forall v\in T_x\mathcal{M}.
    \end{aligned}
  \end{equation*}
  which establishes the desired equivalence.
\end{proof}

\begin{example}[Gradient and Hessian in Minkowski Spaces]
  \label{exm:semi-riem-euc}
  Consider the Euclidean space $\mathbb{R}^n$. Denote $I_{p,q}\in\mathbb{R}^{n\times n}$ for the $n$-by-$n$ diagonal matrix with the first $p$ diagonal entries equaling $-1$ and the rest $q=n-p$ diagonal entries equaling $1$. We compute and compare in this example the gradients and Hessians of differentiable functions on $\mathbb{R}^n$. We take the Riemannian metric as the standard Euclidean metric, and the semi-Riemannian metric given by $\I_{p,q}$. For any $f\in C^2 \left( M \right)$, the gradient of $f$ is determined by
  \begin{equation*}
    \begin{aligned}
      \left( Df \right)^{\top}\I_{p,q}X&=Xf=\left( \nabla f \right)^{\top}X,\quad\forall X\in \Gamma \left( \mathbb{R}^n, \mathbb{R}^n \right)\\
      &\Leftrightarrow Df=\I_{p,q}\nabla f\qquad\textrm{where $\nabla f =\left( \partial_1f,\cdots,\partial_nf \right)\in \mathbb{R}^n$.}
    \end{aligned}
  \end{equation*}
  Furthermore, since in this case the semi-Riemannian metric tensor is constant on $\mathbb{R}^n$, the Christoffel symbol vanishes (c.f. \cite[Chap 3. Proposition 13 and Lemma 14]{ONeill1983}), and thus $D_XDf=\I_{p,q}\nabla_X\nabla f=\I_{p,q}\left(\nabla^2f\right)X$ for all $X\in \Gamma \left( \mathbb{R}^n, \mathbb{R}^n \right)$, where $$\nabla^2f=\left( \partial_i\partial_jf \right)_{1\leq i,j\leq n}\in\mathbb{R}^{n\times n}.$$
By the definition of Hessian, for all $X,Y\in\Gamma \left( \mathbb{R}^n,\mathbb{R}^n \right)$ we have
  \begin{equation*}
    D^2f \left( X,Y \right)=\left\langle D_XDf,Y \right\rangle=Y^{\top}\I_{p,q}\cdot\I_{p,q}\left(\nabla^2f\right)X=Y^{\top}\left(\nabla^2f\right)X
  \end{equation*}
  from which we deduce the equality $D^2f=\nabla^2f$. In fact, the equivalence of the two Hessians also follows directly from \cref{rem:hessian-equiv}, since the geodesics under the Riemannian and semi-Riemannian metrics coincide in this example (see e.g. \cite[Chapter 3 Example 25]{ONeill1983}). In particular, the equivalence between the two types of geodesics and Hessians imply the equivalence of geodesic convexity for the two metrics.
\end{example}

\section{Semi-Riemannian Optimization Framework}
\label{sec:semi-riem-optim}

This section introduces the algorithmic framework of semi-Riemannian optimization. To begin with, we point out that the first- and second-order necessary conditions for optimality in unconstrained optimization and Riemannian optimization can be directly generalized to semi-Riemannian manifolds. We then generalize several Riemannian manifold optimization algorithms to their semi-Riemannian counterparts, and illustrate the difference with a few numerical examples. We end this section by showing global and local convergence results for semi-Riemannian optimization.

\subsection{Optimality Conditions}
\label{sec:optim-cond}

The following \cref{prop:semi-riem-man-opt-optimality} should be considered as the semi-Riemannian analogy of the optimality conditions \cref{prop:riem-man-opt-optimality}
.
\begin{proposition}[Semi-Riemannian First-  and Second-Order Necessary Conditions for Optimality]
  \label{prop:semi-riem-man-opt-optimality}
  Let $\mathcal{M}$ be a semi-Riemannian manifold. A local optimum $x\in\mathcal{M}$ of Problem \cref{eq:man-opt-prob} satisfies the following necessary conditions:
\begin{enumerate}[(i)]
\item\label{item:3} $Df \left( x \right)=0$ if $f:\mathcal{M}\rightarrow \mathbb{R}$ is first-order differentiable;
\item\label{item:4} $Df \left( x \right)=0$ and $D^2f \left( x \right)\succeq 0$ if $f:\mathcal{M}\rightarrow \mathbb{R}$ is second-order differentiable.
\end{enumerate}
\end{proposition}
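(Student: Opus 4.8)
The plan is to imitate the standard proof of the Riemannian optimality conditions in \cref{prop:riem-man-opt-optimality}, restricting $f$ to geodesics emanating from $x$ so that both necessary conditions reduce to elementary one-variable calculus. The essential observation---the one that makes the argument survive the passage to an indefinite metric---is that positive-definiteness of $\left\langle\cdot,\cdot\right\rangle$ is never actually invoked: the first-order conclusion uses only \emph{non-degeneracy} of the scalar product, while the second-order conclusion is a statement about the symmetric bilinear form $D^2f(x)$ that is insensitive to the index of the metric.

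First I would fix an arbitrary $X\in T_x\mathcal{M}$ and let $\gamma_X\colon(-\varepsilon,\varepsilon)\to\mathcal{M}$ be the geodesic with $\gamma_X(0)=x$ and $\gamma_X'(0)=X$. Such a geodesic exists on a small interval by the existence-and-uniqueness theorem for the second-order ODE $D_{\gamma'}\gamma'=0$, and this step is valid verbatim for a metric of arbitrary index, since the Levi-Civita connection $D$ is available on any semi-Riemannian manifold. Because $x$ is a local optimum of $f$, the scalar function $t\mapsto f(\gamma_X(t))$ attains a local optimum at $t=0$, so its first derivative vanishes there, giving $Xf(x)=0$. As $X$ was arbitrary, the defining relation \cref{eq:semi-riem-grad} yields $\left\langle Df(x),X\right\rangle=0$ for every $X\in T_x\mathcal{M}$, and non-degeneracy of the scalar product on $T_x\mathcal{M}$ then forces $Df(x)=0$. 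This is exactly the point at which the Riemannian argument would appeal to positive-definiteness, but where non-degeneracy already suffices.

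For the second-order condition I would differentiate once more along the same geodesic. Starting from $\tfrac{d}{dt}f(\gamma_X(t))=\left\langle Df,\gamma_X'\right\rangle$ and using metric compatibility \cref{eqn:metric compactible} together with the geodesic equation $D_{\gamma_X'}\gamma_X'=0$, the term $(D_XY)f$ in the Hessian formula \cref{eq:semi-riem-hess} drops out, so that $\tfrac{d^2}{dt^2}\big|_{t=0}f(\gamma_X(t))=D^2f(x)(X,X)$. The second-order necessary condition for a one-variable local minimum gives $D^2f(x)(X,X)\ge 0$. Running this over all $X\in T_x\mathcal{M}$ and invoking the symmetry of the $(0,2)$-tensor $D^2f(x)$ established from \cref{eq:semi-riem-hess}, I would conclude $D^2f(x)\succeq 0$.

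The step I expect to require the most care is conceptual rather than computational: one must verify that nothing in the argument secretly relies on positive-definiteness. Concretely, I would stress three points: (i) geodesics in every direction exist irrespective of signature, so the reduction to $f\circ\gamma_X$ is always legitimate; (ii) the passage from $\left\langle Df(x),X\right\rangle=0$ to $Df(x)=0$ uses only the non-degeneracy axiom of a scalar product, not positivity; and (iii) the conclusion $D^2f(x)\succeq 0$ asserts positive-semidefiniteness of the bilinear form $D^2f(x)$ itself and is wholly independent of the index of $\left\langle\cdot,\cdot\right\rangle$---indeed the inequality constrains $f$, not the metric. Once these three points are checked, the proof is identical in form to the Riemannian case.
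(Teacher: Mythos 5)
Your proposal is correct, and its core strategy---reduction to one-variable calculus along geodesics, with non-degeneracy of the scalar product doing the work in the first-order step---is the same as the paper's; your three-point checklist of where positive-definiteness might hide matches the paper's own commentary exactly. Where you genuinely diverge is in how the second-order step is packaged. The paper works inside a geodesically convex neighborhood $U$ of $x$, joins $x$ to an arbitrary $y\in U$ by a geodesic, applies Taylor's theorem with Lagrange remainder to obtain $f(y)=f(x)+\tfrac12\,[D^2f(\gamma(\xi))](\gamma'(\xi),\gamma'(\xi))$ at an intermediate $\xi$, and then lets $y\to x$, invoking continuity of $D^2f$ to pass the inequality to the limit. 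You instead differentiate $f\circ\gamma_X$ twice and evaluate \emph{at} $t=0$, quoting the elementary second-order necessary condition for a one-variable local minimum; the identity $(f\circ\gamma_X)''(0)=[D^2f(x)](X,X)$, obtained from metric compatibility and $D_{\gamma_X'}\gamma_X'=0$, is exactly the computation appearing in the middle line of the paper's display, so the differential-geometric content coincides. Your route is slightly leaner: it needs only short-time existence of geodesics in each direction and twice-differentiability, dispensing with both the geodesically convex neighborhood (which the paper imports from do Carmo) and the limiting argument that uses smoothness of $D^2f$. Conversely, the paper's remainder-form expansion is the version it reuses verbatim to justify the sufficient conditions in \cref{prop:semi-riem-man-opt-second-optimality}, so its extra machinery is not wasted. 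One cosmetic point: since \cref{eq:man-opt-prob} is a minimization, make explicit (as the paper does via $f(y)\geq f(x)$) that ``local optimum'' means local minimum before applying the one-variable second-derivative test; with that convention fixed, your argument is complete. Also note that for part (i) geodesics are more than you need---the paper simply evaluates $Xf(x)=0$ for arbitrary vector fields, and any curve through $x$ with prescribed velocity would serve---though using $\gamma_X$ there is harmless and lets you reuse the same curves in part (ii).
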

\begin{proof}
\begin{enumerate}[(i)]
\item\label{item:9} If $x\in\mathcal{M}$ is a local optimum of \cref{eq:man-opt-prob}, then for any $X\in\Gamma \left( TM \right)$ we have $Xf \left( x \right)=0$, which, by definition \cref{eq:semi-riem-grad} and the non-degeneracy of the semi-Riemannian metric, implies that $Df \left( x \right)=0$.
\item\label{item:11} If $x\in\mathcal{M}$ is a local optimum of \cref{eq:man-opt-prob}, then there exists a local neighborhood $U\subset\mathcal{M}$ of $x$ such that $f \left( y \right)\geq f \left( x \right)$ for all $y\in U$. Without loss of generality we can assume that $U$ is sufficiently small so as to be geodesically convex (see e.g. \cite[\S3.4]{doCarmo1992RG}). Denote $\gamma:\left[ -1,1 \right]\rightarrow U$ for a constant-speed geodesic segment connecting $\gamma \left( 0 \right)=x$ to $\gamma \left( 1 \right)=y$ that lies entirely in $U$. The one-variable function $t\mapsto f\circ\gamma \left( t \right)$ admits Taylor expansion
  \begin{equation*}
    \begin{aligned}
      f \left( y \right) &= f \circ \gamma \left( 1 \right)=f\circ\gamma \left( 0 \right)+\left( f\circ\gamma \right)' \left( 0 \right)+\frac{1}{2}\left( f\circ\gamma \right)'' \left( \xi \right)\\
      &=f \left( x \right)+\left\langle Df \left( x \right), \gamma' \left( 0 \right) \right\rangle+\frac{1}{2}D_{\gamma' \left( \xi \right)}\left\langle Df \left( \gamma \left( \xi \right) \right), \gamma' \left( \xi \right) \right\rangle\\
      &=f \left( x \right)+\frac{1}{2}\left[D^2f \left( \gamma \left( \xi \right) \right)\right] \left( \gamma' \left( \xi \right), \gamma' \left( \xi \right) \right)
    \end{aligned}
  \end{equation*}
  where the last equality used $Df \left( x \right)=0$. Letting $y\rightarrow x$ on $\mathcal{M}$, the smoothness of $D^2f$ ensures that
  \begin{equation*}
    D^2f \left( x \right)\left[ V,V \right]\geq 0\qquad\forall V\in T_x\mathcal{M}
  \end{equation*}
  which establishes $D^2f \left( x \right)\succeq 0$.
\end{enumerate}
\end{proof}

The formal similarity between \cref{prop:semi-riem-man-opt-optimality} and \cref{prop:riem-man-opt-optimality} is not entirely surprising. As can be seen from the proofs, both optimality conditions are based on geometric interpretations of the same Taylor expansion; the metrics affect the specific forms of the gradient and Hessian, but the optimality conditions are essentially derived from the Taylor expansions only. Completely parallel to the Riemannian setting, we can also translate the second-order sufficient conditions \cite[\S7.3]{LuenbergerYe1984} into the semi-Riemannian setting without much difficulty. The proof essentially follows \cite[\S7.3 Proposition 3]{LuenbergerYe1984}, with the Taylor expansion replaced with the expansion along geodesics in \cref{prop:semi-riem-man-opt-optimality} (ii); we omit the proof since it is straightforward, but document the result in \cref{prop:semi-riem-man-opt-second-optimality} below for future reference. Recall from \cite[\S7.1]{LuenbergerYe1984} that $x\in\mathcal{M}$ is a strict relative minimum point of $f$ on $\mathcal{M}$ if there is a local neighborhood of $x$ on $\mathcal{M}$ such that $f \left( y \right)>f \left( x \right)$ for all $y\in U\backslash \left\{ x \right\}$.

\begin{proposition}[Semi-Riemannian Second-Order Sufficient Conditions]
  \label{prop:semi-riem-man-opt-second-optimality}
  Let $f$ be a second differentiable function on a semi-Riemannian manifold $\mathcal{M}$, and $x\in \mathcal{M}$ is a an interior point. If $Df \left( x \right)=0$ and $D^2f \left( x \right)\succ 0$, then $x$ is a strict relative minimum point of $f$.
\end{proposition}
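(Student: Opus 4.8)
The plan is to run the argument behind \cref{prop:semi-riem-man-opt-optimality}(ii) in reverse: the same second-order Taylor expansion along geodesics is the engine, but now the hypothesis $D^2f(x)\succ 0$ is used to \emph{force} a strict inequality rather than being deduced from one. The only genuinely new ingredients are (a) producing a neighborhood of $x$ every point of which is reached from $x$ by a geodesic that stays in the neighborhood, and (b) propagating the strict positive-definiteness from the single point $x$ to a whole neighborhood, so that the quadratic remainder is strictly positive at every intermediate point along such geodesics.

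First I would set up a normal neighborhood. Since the metric tensor is non-degenerate, the Levi-Civita connection and hence the exponential map $\exp_x$ are defined, and $d(\exp_x)_0=\mathrm{id}_{T_x\mathcal{M}}$; thus $\exp_x$ restricts to a diffeomorphism from a star-shaped open set $\tilde U\subset T_x\mathcal{M}$ containing $0$ onto an open neighborhood $U:=\exp_x(\tilde U)$ of $x$ (see e.g. \cite[Chapter 3]{ONeill1983}). By construction each $y\in U$ equals $\exp_x v$ for a unique $v\in\tilde U$, and the radial geodesic $\gamma_v(t):=\exp_x(tv)$, $t\in[0,1]$, joins $x$ to $y$ while remaining inside $U$. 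This replaces the appeal to geodesic convexity made in \cref{prop:semi-riem-man-opt-optimality}(ii) and, crucially, requires no notion of distance, which the indefinite metric does not provide.

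Next I would propagate positive-definiteness and conclude. The Hessian $D^2f$ is a smooth, hence continuous, symmetric $(0,2)$-tensor field, and positive-definiteness of a symmetric bilinear form is an open condition (e.g. by Sylvester's criterion applied to its component matrix in a fixed chart); after shrinking $\tilde U$ if necessary I may therefore assume $D^2f(y)\succ 0$ for every $y\in U$. I would stress that this is a statement about the quadratic form $D^2f$ alone and is unrelated to the index of the metric. Now fix $y\in U\setminus\{x\}$ and write $y=\exp_x v$ with $v\neq 0$. Applying the geodesic Taylor expansion from the proof of \cref{prop:semi-riem-man-opt-optimality}(ii) to $f\circ\gamma_v$ on $[0,1]$ yields, for some $\xi\in(0,1)$,
\[
  f(y)=f(x)+\left\langle Df(x),\gamma_v'(0)\right\rangle+\tfrac12\,\bigl[D^2f(\gamma_v(\xi))\bigr]\bigl(\gamma_v'(\xi),\gamma_v'(\xi)\bigr).
\]
The hypothesis $Df(x)=0$ annihilates the first-order term; because $\gamma_v$ is a geodesic with $\gamma_v'(0)=v\neq 0$, its velocity field is parallel and parallel transport is a linear isomorphism, so $\gamma_v'(\xi)\neq 0$. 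Since $\gamma_v(\xi)\in U$ and $D^2f(\gamma_v(\xi))\succ 0$, the quadratic term is strictly positive, whence $f(y)>f(x)$ for every $y\in U\setminus\{x\}$, i.e. $x$ is a strict relative minimum point.

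The main obstacle, and the point at which the semi-Riemannian setting departs from the classical one, is step (b). In the Euclidean or Riemannian proof one controls the Taylor remainder using a norm together with a uniform lower bound on the smallest eigenvalue of the Hessian over a metric ball; here the indefinite metric offers no norm with which to measure the remainder or to bound the intermediate velocities. I would therefore avoid any quantitative remainder estimate and argue purely qualitatively---openness of positive-definiteness together with pointwise strict positivity of $[D^2f](\gamma_v',\gamma_v')$ along each radial geodesic---which is precisely what allows the conclusion to survive the loss of positivity of the metric.
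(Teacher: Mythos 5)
Your proof is correct and follows essentially the route the paper itself indicates: the paper omits the proof, stating that it follows \cite[\S 7.3, Proposition 3]{LuenbergerYe1984} with the Taylor expansion replaced by the geodesic expansion from the proof of \cref{prop:semi-riem-man-opt-optimality}(ii), and your argument is exactly that adaptation, with the omitted ``straightforward'' details (a star-shaped normal neighborhood in place of a geodesically convex one, openness of positive-definiteness to propagate $D^2f\succ 0$ to a neighborhood, and $\gamma_v'(\xi)\neq 0$ via parallel transport) correctly supplied. Your qualitative treatment of the remainder, avoiding the classical norm-plus-eigenvalue bound, is a sound and apt substitute given the indefinite metric, so no changes are needed.
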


The formal similarity between the Riemannian and semi-Riemannian optimality conditions indicates that it might be possible to transfer many technologies in manifold optimization from the Riemannian to the semi-Riemannian setting. For instance, the equivalence of the first-order necessary condition implies that, in order to search for a first-order stationary point, on a semi-Riemannian manifold we should look for points at which the semi-Riemannian gradient $Df$ vanishes, just like in the Riemannian realm we look for points at which the Riemannian gradient $\nabla f$ vanishes. However, extra care has to be taken regarding the influence different metric structures have on the induced topology of the underlying manifold. For Riemannian manifolds, it is straightforward to check that the induced topology coincides with the original topology of the underlying manifold (see e.g. \cite[Chap 7 Proposition 2.6]{doCarmo1992RG}), whereas the ``topology'' induced by a semi-Riemannian structure is generally quite pathological --- for instance, two distinct points connected by a light-like geodesic (a geodesic along which all tangent vectors are \emph{null vectors} (c.f. \cref{defn:vector-types})) has zero distance. An exemplary consequence is that, in search of a first-order stationary point, we shouldn't be looking for points at which $\left\| Df \right\|^2$ vanishes since this does not imply $Df=0$.

\subsection{Determining the ``Steepest Descent Direction''}
\label{sec:algorithms}

As long as gradients, Hessians, retractions, and parallel-transports can be properly defined, one might think there exists no essential difficulty in generalizing any Riemannian optimization algorithms to the semi-Riemannian setup, with the Riemannian geometric quantities replaced with their semi-Riemannian counterparts, \emph{mutatis mutandis}. It is tempting to apply this methodology to all standard manifold optimization algorithms, including but not limited to first-order methods such as steepest descent, conjugate gradient descent, and quasi-Newton methods, or second-order methods such as Newton's method and trust region methods. We discuss in this subsection how to determine a proper descent direction for steepest-descent-type algorithms on a semi-Riemannian manifold. Some exemplary first- and second-order methods will be discussed in the next subsection.



As one of the prototypical first-order optimization algorithms, gradient descent is known for its simplicity yet surprisingly powerful theoretical guarantees under mild technical assumptions. A plausible ``Semi-Riemannian Gradient Descent'' algorithm that na{\"i}vely follows the paradigm of Riemannian gradient descent could be designed as simply replacing the Riemannian gradient $\nabla f$ with the semi-Riemannian gradient $Df$ defined in \cref{eq:semi-riem-grad}, as listed in \cref{alg:gd-semi-mfld}. Of course, a key step in \cref{alg:gd-semi-mfld} is to determine the descent direction $\eta_k$ in each iteration. However, while negative gradient is an obvious choice in Riemannian manifold optimization, the ``steepest descent direction'' is a slightly more subtle notion in semi-Riemannian geometry, as will be demonstrated shortly in this section.

A first difficulty with replacing $-\nabla f \left( x \right)$ by $-Df \left( x \right)$ is that $-Df \left( x \right)$ needs not be a descent direction at all: consider, for instance, an illustrative example of optimization in the Minkowski space (Euclidean space equipped with the standard semi-Riemannian metric): the first order Taylor expansion at $x$ gives for any small $t>0$
\begin{equation}
  \label{eq:taylor-expansion}
  f \left( x-t D f \left( x \right) \right) \approx f \left( x \right)-t \left\langle Df \left( x \right), Df \left( x \right) \right\rangle 
\end{equation}
but in the semi-Riemannian setting the scalar product term $\left\langle Df \left( x \right), Df \left( x \right) \right\rangle$ may well be negative, unlike the Riemannian case. In order for the value of the objective function to decrease (at least in the first order), we have to pick the descent direction to be either $Df \left( x \right)$ or $-Df \left( x \right)$, whichever makes $\left\langle Df \left( x \right), Df \left( x \right) \right\rangle>0$. 

Though the quick fix by replacing $Df \left( x \right)$ with $\pm Df \left( x \right)$ would work generically in many problems of practical interest, a second, and more serious issue with choosing $\pm Df \left( x \right)$ as the descent direction lies inherently at the indefiniteness of the metric tensor. For standard gradient descent algorithms (e.g. on Euclidean spaces with standard metric, or more generally on Riemannian manifolds), the algorithm terminates after $\left\| \nabla f \right\|$ becomes smaller than a predefined threshold; for norms induced from positive definite metric tensors, $\left\| \nabla f \right\|\approx 0$ is equivalent to characterizing $\nabla f\approx 0$, implying that the sequence $\left\{ x_k\mid k=0,1,\cdots \right\}$ is truly approaching a first order stationary point. This intuition breaks down for indefinite metric tensors as $\left\| D f \right\|\approx 0$ no longer implies the proximity between $Df$ and $0$. Even though one can fix this ill-defined termination condition by introducing an auxiliary Riemannian metric (which always exists on a Riemannian manifold), when $Df$ is a null vector (i.e. $\left\| Df \right\|=0$, see \cref{defn:vector-types}), the gradient algorithm loses the first order decrease in the objection function value (see \cref{eq:taylor-expansion}); the validity of the algorithm then relies upon second-order information, with which we lose the benefits of first-order methods. As a concrete example, consider the unconstrained optimization problem on the Minkowski space $\mathbb{R}^2$ equipped with a metric of signature $\left( -1,1 \right)$:
\begin{equation*}
  \min_{x,y\in\mathbb{R}}f \left( x,y \right)=\frac{1}{2}\left( x-y \right)^2.
\end{equation*}
Recall from \cref{exm:semi-riem-euc} that
\begin{equation*}
  Df \left( x,y \right)=\I_{1,1}\nabla f \left( x,y \right)=-\left( x-y \right) \cdot \left( 1,1 \right)^{\top}
\end{equation*}
which is a direction parallel to the isolines of the objective function $f$. Thus the semi-Riemannian gradient descent will never decrease the objective function value.

\begin{algorithm}
\caption{{\sc Semi-Riemannian Steepest Descent}}
\label{alg:gd-semi-mfld}
\begin{algorithmic}[1]
  \Require{Manifold $M$, semi-Riemannian metric $\left\langle \cdot, \cdot \right\rangle$, objective function $f$, retraction $\mathrm{Retr}:T\!M\rightarrow M$, initial value $x_0\in M$, parameters for {\sc Linesearch}, gradient $Df$}
  \State $x_0\gets$ {\sc Initiate}
  \State $k\gets 0$
  \While{not converge}
  \State $\eta\gets${\sc FindDescentDirection}$\left( x_k,M,Df \left( x_k \right) \right)$ \Comment{c.f. \cref{alg:finding-steepest-descent-dir}}
  \State $0<t_k\gets$ {\sc LineSearch}$\left( f, x_k, \eta_k \right)$ \Comment{$t_k$ is the Armijo step size}
  \State Choose $x_{k+1}$ such that \Comment{$c\in \left( 0,1 \right)$ is a parameter}
  \begin{equation*}
   f \left( x_k \right)-f \left( x_{k+1} \right) > c\left[f \left( x_k \right) - f\left(\mathrm{Retr}_{x_k} \left( t_k\eta_k \right)\right)\right]
  \end{equation*}
  \State $k\gets k+1$
  \EndWhile
  \State \Return Sequence of iterates $\left\{ x_k \right\}$
 \end{algorithmic}
\end{algorithm}

To rectify these issues, it is necessary to revisit the motivating, geometric interpretation of the negative gradient direction as the direction of ``steepest descent,'' i.e. for any Riemannian manifold $\left( M,g \right)$ and function $f$ on $M$ differentiable at $x\in M$, we know from vector arithmetic that
\begin{equation}
  \label{eq:riem-steepest-descent}
  -\frac{\nabla f \left( x \right)}{\sqrt{g \left( \nabla f \left( x \right), \nabla f \left( x \right) \right)}} =\argmin _{V\in T_xM\atop g\left( V,V \right)=1}g \left( V,\nabla f \left( x \right) \right)=\argmin _{V\in T_xM\atop g\left(V, V \right)=1}Vf \left( x \right).
\end{equation}
In the semi-Riemannian setting, assuming $M$ is equipped with a semi-Riemannian metric $h$, we can also set the descent direction leading to the steepest decrease of the objective function value. It is not hard to see that in general
\begin{equation}
  \label{eq:semi-riem-steepest-descent}
  \pm \frac{Df \left( x \right)}{\sqrt{\left|h\left( Df \left( x \right), Df \left( x \right) \right)\right|}} \neq \argmin _{V\in T_xM\atop\left|h\left( V,V \right)\right|=1}h \left( V,\nabla f \left( x \right) \right)=\argmin _{V\in T_xM\atop\left|h\left( V,V \right)\right|=1}Vf \left( x \right).
\end{equation}
In fact, in both versions the search for the ``steepest descent direction'' is guided by making the directional derivative $Vf \left( x \right)$ as negative as possible, but constrained on different unit spheres. The precise relation between the two steepest descent directions is not readily visible, for the two unit spheres could differ drastically in geometry. In fact, for cases in which the unit ball $\left\{ v\in T_xM\mid \left| h \left( v,v \right) \right| = 1\right\}$ is noncompact, the ``steepest descent direction'' so defined may not even exist.

\begin{example}
 Consider the optimization problem over the Minkowski space $\mathbb{R}^{1,1}$ equipped with a metric of signature $\left( -, + \right)$
 \begin{equation*}
   \min_{x,y\in\mathbb{R}}f \left( x,y \right)=\frac{1}{2}\left[ x^2+\left( y+1 \right)^2 \right].
 \end{equation*}
At $\left( x,y \right)=\left( 0,0 \right)$, recall from \cref{exm:semi-riem-euc} that $\nabla f \left( 0,0 \right)=\left( 0,1 \right)^{\top}=Df \left( 0,0 \right)$. Over the unit ball $\left\{ \left( u,v \right)^{\top}\in\mathbb{R}^2\mid u^2-v^2=\pm1 \right\}\subset T_{\left( 0,0 \right)}\mathbb{R}^2$ under this Lorentzian metric, the scalar product $\left\langle Df \left( 0,0 \right), \left( u,v \right)^{\top} \right\rangle=v\rightarrow-\infty$ as $\left( u,v \right)\rightarrow \left( \infty, -\infty \right)$. Even worse, since the scalar product approaches $-\infty$, it is not possible to find a descent direction $\eta$ with $\left\langle Df \left( 0,0 \right) \right\rangle\geq \gamma \min _{V\in T_xM,\,\left|\,\left\langle V,V \right\rangle\right|=1}Vf \left( 0,0 \right)$ for some pre-set threshold $\gamma>0$.
\end{example}

One way to fix this non-compactness issue is to restrict the candidate tangent vectors $V$ in the minimization of $Vf \left( x \right)$ to lie in a compact subset of the tangent space $T\!M$. For instance, one can consider the unit sphere in $T\!M$ under a Riemannian metric. Comparing the right hand sides of \cref{eq:riem-steepest-descent} and \cref{eq:semi-riem-steepest-descent}, descent directions determined in this manner will be the negative gradient direction under the Riemannian metric, thus in general has nothing to do with the semi-Riemannian metric; moreover, if a Riemannian metric has to be defined laboriously in addition to the semi-Riemannian one, in principle we can already employ well-established, fully-functioning Riemannian optimization techniques, thus bypassing the semi-Riemannian setup entirely. While this argument might well render first-order semi-Riemannian optimization futile, we emphasize here that one can define steepest descent directions with the aid of ``Riemannian structures'' that arise naturally from the semi-Riemannian structure, and thus there is no need to specify a separate Riemannian structure in parallel to the semi-Riemannian one, though this affiliated ``Riemannian structure'' is highly local.

The key observation here is that one does not need to consistently specify a Riemannian structure over the entire manifold, if the only goal is to find one steepest descent direction in that tangent space --- in other words, when we search for the steepest descent direction in the tangent space $T_xM$ of a semi-Riemannian manifold $M$, it suffices to specify a Riemannian structure locally around $x$, or more extremely, only on the tangent space $T_xM$, in order for the ``steepest descent direction'' to be well-defined over a compact subset of $T_xM$. These local inner products do not have to ``patch together'' to give rise to a globally defined Riemannian structure. A very handy way to find local inner products is through the help of \emph{geodesic normal coordinates} that reduce the local calculation to the Minkowski spaces. For any $x\in M$, there is a normal neighborhood $U\subset M$ containing $x$ such that the exponential map $\mathrm{exp}_x:T_xM\rightarrow M$ is a diffeomorphism when restricted to $U$, and one can pick an orthonormal basis (with respect to the semi-Riemannian metric on $M$), denoted as $\left\{ e_1,\cdots,e_n \right\}$, such that $\left\langle e_i,e_j \right\rangle_x=\delta_{ij}\epsilon_j$, where $1\leq i,j\leq n$, $n=\mathrm{dim}\left( M \right)$, $\delta_{ij}$ are the Kronecker delta's, and $\epsilon_j\in \left\{ \pm 1 \right\}$. Without loss of generality, assume $M$ is a semi-Riemannian manifold of order $p$, where $0\leq p\leq n$, and that $\epsilon_1=\cdots=\epsilon_p=-1$, $\epsilon_{p+1}=\cdots=\epsilon_n=1$. The normal coordinates of any $y\in U$ are determined by the coefficients of $\exp_x^{-1}y\in T_xM$ with respect to the orthonormal basis $\left\{ e_1,\cdots,e_n \right\}$. It is straightforward (see \cite[Proposition 33]{ONeill1983}) to verify that

\begin{equation*}
  g_{ij}\left( x \right)=\delta_{ij}\epsilon_j, \quad \Gamma_{ij}^k \left( x \right)=0\qquad \forall 1\leq i,j,k \leq n
\end{equation*}
where $\left\{g_{ij}\right\}$ denotes the semi-Riemannian metric tensor components and $\left\{\Gamma_{ij}^k\right\}$ stands for the Christoffel symbols. Under this coordinate system, it is straightforward to verify that the scalar product between tangent vectors $u,v\in T_xM$ can be written as
\begin{equation*}
  \left\langle u,v \right\rangle=\sum_{i=1}^n\epsilon_iu^iv^i
\end{equation*}
where $u=u^ie_i$ and $v=v^je_j$ (Einstein's summation convention implicitly invoked). The local Riemannian structure can thus be defined as
\begin{equation}
  \label{eq:ptwise-riem-metric}
  g \left( u,v \right)=\sum_{i=1}^nu^iv^i.
\end{equation}
Essentially, such a local inner product is defined by imposing orthogonality between positive and negative definite subspaces of $T_xM$ and ``reversing the sign'' of the negative definite component of the scalar product. Making such a modification consistently and smoothly over the entire manifold is certainly subject to topological obstructions; nevertheless, locally (in fact, pointwise) defined Riemannian structures suffice for our purposes, and in practical applications we can simply the workflow by choosing an arbitrary orthonormal basis in the tangent space in place of the geodesic frame. The orthonormalization process, of course, is adapted for the semi-Riemannian setting; see \cite[Chapter 2, Lemma 24 and Lemma 25]{ONeill1983} or \cref{alg:semi-riem-basis-pursuit}. The output set of vectors $\left\{ e_1,\cdots,e_n \right\}$ satisfies
\begin{equation*}
  \left\langle e_i,e_j \right\rangle=\delta_{ij}\epsilon_i
\end{equation*}
where $\delta_{ij}$ are the Kronecker symbols, and $\epsilon_i=\left\langle e_i,e_i \right\rangle=\pm1$. A generic approach which works with high probability is to pick a random linearly independent set of vectors and apply a (pivoted) Gram-Schmidt orthogonalization process with respect to the indefinite scalar product; see \cref{alg:semi-riem-qr}.

\begin{algorithm}
\caption{{\sc Finding an Orthonormal Basis with respect to a Nondegenerate Indefinite Scalar Product}}
\label{alg:semi-riem-basis-pursuit}
\begin{algorithmic}[1]
  \Require{Vector space $V$ of finite dimension $n\in\mathbb{N}$, scalar product $\left\langle \cdot,\cdot \right\rangle:V\times V\rightarrow\mathbb{R}$ of type $\left( p,q \right)$ with $p+q=n$}
  \Function{FindONBasis}{$V$}
  \State Find $v\in V$ with $\left\langle v,v \right\rangle\neq 0$\quad \Comment{$v$ exists by nondegeneracy}
  \State $e_1\gets v/\sqrt{\left| \left\langle v,v \right\rangle \right|}$
  \For{$k=2,\cdots,n$}
  \State $V_k\gets \mathrm{span}\left\{ e_1,\cdots,e_{k-1} \right\}$
  \State $W_k\gets V_k^{\perp}$ \Comment{$V=V_k\oplus V_k^{\perp}$ by \cite[Lemma 3.19 and 3.23]{ONeill1983}}
  \State Find $w_k\in W_k$ with $\left\langle w_k,w_k \right\rangle\neq 0$\quad \Comment{$w_k$ exists by nondegeneracy of $W_k$}
  \State $e_k\gets w_k/\sqrt{\left| \left\langle w_k,w_k \right\rangle \right|}$
  \EndFor
  \State \Return $\left\{ e_1,\cdots,e_n \right\}$
  \EndFunction
 \end{algorithmic}
\end{algorithm}

\begin{algorithm}
\caption{{\sc Gram-Schmidt for an Indefinite Scalar Product}}
\label{alg:semi-riem-qr}
\begin{algorithmic}[1]
  \Require{Vector space $V$ of finite dimension $n\in\mathbb{N}$, scalar product $\left\langle \cdot,\cdot \right\rangle:V\times V\rightarrow\mathbb{R}$ of type $\left( p,q \right)$ with $p+q=n$, input linearly independent vectors $\left\{ v_1,\cdots,v_n \right\}$}
  \Function{IndefGramSchmidt}{$\left\{ v_1,\cdots,v_n \right\}$}
  \State $e_1\gets v_1/\sqrt{\left| \left\langle v_1,v_1 \right\rangle \right|}$ \Comment{w.l.o.g. assume $\left\langle v_1,v_1 \right\rangle\neq 0$}
  \For{$k=2,\cdots,n$}
  \State $\displaystyle w_k\gets v_k-\sum_{\ell=1}^{k-1}\left\langle v_k,v_{\ell} \right\rangle v_{\ell}$ \Comment{w.l.o.g. assume $\left\langle w_k,w_k \right\rangle\neq 0$ after pivoting}
  \State $e_k\gets w_k/\sqrt{\left| \left\langle w_k,w_k \right\rangle \right|}$
  \EndFor
  \State \Return $\left\{ e_1,\cdots,e_n \right\}$
  \EndFunction
 \end{algorithmic}
\end{algorithm}

In geodesic normal coordinates, the gradient $Df$ takes the form
\begin{equation*}
  Df \left( x \right)=\sum_{i=1}^n \epsilon_i\partial_if \left( x \right)\partial_i\big|_x \label{eq:semi-riem-gradient}
\end{equation*}
and choosing the steepest descent direction reduces to the problem
\begin{equation*}
  \max_{v^1,\cdots,v^n\in\mathbb{R}\atop \left( v^1 \right)^2+\cdots+\left( v^n \right)^2=1}\sum_{i=1}^n\epsilon_iv^i\partial_if \left( x \right)
\end{equation*}
of which the optimum is obviously attained at
\begin{equation*}
  \left( v^1,\cdots,v^n \right)=\frac{1}{\displaystyle \sum_{i=1}^n \left( \partial_if \left( x \right) \right)^2}\left( \epsilon_1\partial_1f \left( x \right), \cdots, \epsilon_n\partial_nf \left( x \right)\right).
\end{equation*}
For the simplicity of statement, we introduce the notation
\begin{equation*}
  \left[ X \right]^{+}:=\sum_{i=1}^n \left\langle X, e_i \right\rangle e_i
\end{equation*}
for $X\in T_xM$, where $\left\{e_1,\cdots,e_n\right\}$ is an orthonormal basis for the semi-Riemannian metric tensor $\left\langle \cdot,\cdot \right\rangle$ on $T_xM$. Using this notation, the descent direction we will choose can be written as
\begin{equation}
\label{eq:flip-negative}
  -\left[ Df \left( x \right) \right]^+=-\sum_{i=1}^n \left\langle Df \left( x \right), e_i \right\rangle e_i.
\end{equation}
Note that, by \cite[Lemma 3.25]{ONeill1983}, with respect to an orthonormal basis $\left\{ e_1,\cdots,e_n \right\}$ we have in general
\begin{equation*}
  Df \left( x \right)=\sum_{i=1}^n\epsilon_i \left\langle Df \left( x \right), e_i \right\rangle e_i\neq \sum_{i=1}^n \left\langle Df \left( x \right), e_i \right\rangle e_i=\left[ Df \left( x \right) \right]^+
\end{equation*}
which is consistent with our previous discussion that the steepest descent direction in the semi-Riemannian setting is not $-Df \left( x \right)$ in general. Intuitively, the ``steepest descent direction'' is obtained by reversing signs of components of the gradient that ``corresponds to'' the negative definite subspace, and then rescale according to the induced Riemannian metric. This leads to the routine \cref{alg:finding-steepest-descent-dir} for finding descent directions.

\begin{remark}
  The definition $\left[ X \right]^+$ certainly depends on the choice of the orthonormal basis with respect to the semi-Riemannian metric tensor. In other words, if we choose a different orthonormal basis with respect to the same semi-Riemannian metric on $T_xM$, the resulting descent direction will also be different. In practical computations, we could pre-compute an orthonormal basis for all points on the manifold, but that will complicate the proofs for convergence since the amount of descent will be uncomparable to each other across tangent vectors. A compromise is to cover the entire semi-Riemannian manifold with a chart consisting of geodesic normal neighborhoods, and extend the definition \cref{eq:flip-negative} from at a single point to over the geodesic normal neighborhood around each point, with the orthonormal basis given by \emph{geodesic normal frame fields} \cite[pp.84-85]{ONeill1983} defined over each normal neighborhood. Under suitable compactness assumptions, this construction essentially defines a Riemannian structure on the semi-Riemannian manifold by means of partition of unity and
  \begin{equation}
    \label{eq:induced-riem-structure}
    g \left( X,Y \right):=\left\langle X,\left[ Y \right]^+ \right\rangle=\sum_{i=1}^n \left\langle X,e_i \right\rangle \left\langle Y,e_i \right\rangle.
  \end{equation}
 The arbitrariness of the choice of geodesic normal frame fields makes this Riemannian structure non-canonical, but the bilinear form $g \left( \cdot,\cdot \right)$ is symmetric and coercive, and can thus be used for performing steepest descent in the semi-Riemannian setting.
\end{remark}

\begin{algorithm}
\caption{{\sc Finding Semi-Riemannian Descent Direction}}
\label{alg:finding-steepest-descent-dir}
\begin{algorithmic}[1]
  \Function{FindDescentDirection}{$x,M,Df \left( x \right)$}
  \State $\left\{ e_1,\cdots,e_n \right\}\gets${\sc FindONBasis}$\left( T_xM, \left\langle \cdot,\cdot \right\rangle \right)$
  \State $\displaystyle\eta\gets -\left[ Df \left( x \right) \right]^+=-\sum_{i=1}^n \left\langle Df \left( x \right), e_i \right\rangle e_i$
  \State \Return $\eta$
  \EndFunction
 \end{algorithmic}
\end{algorithm}

\begin{remark}
  For Minkowski spaces, it is easy to check that the descent direction output from \cref{alg:finding-steepest-descent-dir} coincides with $-\nabla f \left( x \right)$ exactly. In this sense \cref{alg:gd-semi-mfld} can be viewed as a generalization of the Riemannian steepest descent algorithm. In fact, the pointwise construction of positive-definite scalar products in each tangent space \cref{eq:ptwise-riem-metric} indicates that the methodology of Riemannian manifold optimization can be carried over to settings with weaker geometric assumptions, namely, when the inner product structure on the tangent spaces need not vary smoothly from point to point. From this perspective, we can also view semi-Riemannian optimization as a type of manifold optimization with weaker geometric assumptions.
\end{remark}

\begin{remark}
  \cref{alg:gd-semi-mfld} can indeed be viewed as an instance of a more general paradigm of line-search based optimization on manifolds \cite[\S3]{RW2012}. Our choice of the descent direction in \cref{alg:finding-steepest-descent-dir} ensures that the objective function value indeed decreases, at least for sufficiently small step size, which further facilitates convergence. 
\end{remark}

\begin{example}[Semi-Riemannian Gradient Descent for Minkowski Spaces]
  \label{exm:semi-riem-grad-descent-optim}
  Recall from \cref{exm:semi-riem-euc} that the semi-Riemannian gradient of a differentiable function on Minkowski space $\mathbb{R}^{p,q}$ is $Df \left( x \right)=\I_{p,q}\nabla f \left( x \right)$. If we choose the standard canonical basis for $\mathbb{R}^{p,q}$, the descent direction $\left[ Df \left( x \right) \right]^+$ produced by \cref{alg:finding-steepest-descent-dir} and needed for \cref{alg:gd-semi-mfld} is
  \begin{equation*}
    \left[ Df \left( x \right) \right]^+=\I_n\cdot\I_{p,q}\cdot\I_n\cdot\I_{p,q}\nabla f \left( x \right)=\nabla f \left( x \right)
  \end{equation*}
and thus the semi-Riemannian gradient descent coincides with the standard gradient descent algorithm on the Euclidean space if the standard orthonormal basis is used at every point of $\mathbb{R}^{p,q}$. Of course, if we use a randomly generated orthonormal basis (under the semi-Riemannian metric) at each point, the semi-Riemannian gradient descent will be drastically different from standard gradient descent on Euclidean spaces; see \Cref{sec:minkowski-spaces} for an illustration.
\end{example}

When studying self-concordant barrier functions for interior point methods, a useful guiding principle is to consider the Riemannian geometry defined by the Hessian of a strictly convex self-concordant barrier function \cite{NN1994,Duistermaat2001,Renegar2001,NT2002}; in this setting, descent directions produced from Newton's method can be equivalently viewed as gradients with respect to the Riemannian structure. When the barrier function is non-convex, however, the Hessians are no longer positive definite, and the Riemannain geometry is replaced with semi-Riemannian geometry. It is well known that the direction computed from Newton's equation \cref{eq:newton-equation} may not always be a descent direction if $\nabla^2f$ is not positive definite \cite[\textsection 3.3]{WN1999}, which is consistent with our observation in this subsection that semi-Riemannian gradients need not be descent directions in general. In this particular case, our modification \cref{eq:flip-negative} can also be interpreted as a novel variant of the \emph{Hessian modification} strategy \cite[\textsection 3.4]{WN1999}, as follows. Denote the function under consideration as $f:Q\rightarrow\mathbb{R}$, where $Q\subset \mathbb{R}^n$ is a connected, closed convex subset with non-empty interior and contains no straight lines. Assume $\nabla^2f$ is non-degenerate on $Q$, which necessarily implies that $\nabla^2f$ is of constant signature on $Q$. At any $x\in Q$, the negative gradient of $f$ with respect to the semi-Riemannian metric defined by the Hessian of $f$ is $-Df \left( x \right)=-\left[\nabla^2 f\left( x \right)\right]^{-1}\nabla f \left( x \right)$, where $\nabla f$ and $\nabla^2f$ stand for the gradient and Hessian of $f$ with respect to the Euclidean geometry of $Q$. Our proposed modification first finds a matrix $U\in\mathbb{R}^{n\times n}$ satisfying
\begin{equation*}
  U^{\top}\left[ \nabla^2f \left( x \right) \right]U=\I_{p,q}
\end{equation*}
where $\left( p,q \right)$ is the constant signature of $\nabla^2f$ on $Q$, and then set
\begin{equation}
\label{eq:evident}
  -\left[ Df \left( x \right) \right]^+=-UU^{\top}\left[ \nabla^2f \left( x \right) \right]Df \left( x \right)=-UU^{\top}\nabla f \left( x \right)
\end{equation}
which is guaranteed to be a descent direction since
\begin{equation*}
  -\left[\nabla f \left( x \right)\right]^{\top}\left[ Df \left( x \right) \right]^+=-\left\| U\nabla f \left( x \right) \right\|^2\leq 0.
\end{equation*}
From \cref{eq:evident} it is evident that the semi-Riemannian descent direction $-\left[ Df \left( x \right) \right]^{+}$ is obtained from $-Df \left( x \right)$ by replacing the inverse Hessian with $UU^{\top}$. This is close to Hessian modification in spirit, but also drastically different from common Hessian modification techniques that adds a correction matrix to the true Hessian $\nabla^2f \left( x \right)$; see \cite[\textsection 3.4]{WN1999} for more detailed explanation.

\subsection{Semi-Riemannian Conjugate Gradient}
\label{sec:conjugate-gradient}

Using the same steepest descent directions and line search strategy, we can also adapt conjugate gradient methods to the semi-Riemannian setting. See \cref{alg:cg-semi-mfld} for the algorithm description. Note that in \cref{alg:cg-semi-mfld} we used the Polak-Rebi{\`e}re formula to determine $\beta_k$, but alternatives such as Hestenes-Stiefel or Fletcher-Reeves methods (see e.g. \cite[\S2.6]{EAS1998} or \cite{RW2012}) can be easily adapted to the semi-Riemannian setting as well, since none of the major steps in Riemannian conjugate gradient algorithm relies essentially on the positive-definiteness of the metric tensor, except that the (steepest) descent direction needs to be modified according to \cref{eq:flip-negative}. We noticed in practice that Polak-Rebi{\`e}re and formulae tend to be more robust and efficient than the Fletcher-Reeves formula for the choice of $\beta_k$, which is consistent with general observations of nonlinear conjugate gradient methods \cite[\textsection 5.2]{WN1999}.

\begin{algorithm}
\caption{{\sc Semi-Riemannian Conjugate Gradient (Polak-Rebi{\`e}re)}}
\label{alg:cg-semi-mfld}
\begin{algorithmic}[1]
  \Require{Manifold $M$, objective function $f$, retraction $\mathrm{Retr}$, parallel transport $P$, initial value $x_0\in M$, parameters for {\sc Linesearch}, gradient $Df$ and Hessian $D^2f$}
  \State $k\gets 0$
  \State $x_0\gets$ {\sc Initiate}
  \State $\eta_0\gets${\sc FindDescentDirection}$\left( x_0, M, Df \left( x_0 \right) \right)$ \Comment{c.f. \cref{alg:finding-steepest-descent-dir}}
  \While{not converge}
  \State $0<t_k\gets$ {\sc LineSearch}$\left( f, x_k, \eta_k \right)$ \Comment{$t_k$ is the Armijo step size}
  \State $x_{k+1}\gets \mathrm{Retr}_{x_k}\left( t_k\eta_k \right)$
  \State $\xi_{k+1}\gets${\sc FindDescentDirection}$\left( x_{k+1}, M, Df \left( x_{k+1} \right) \right)$
  \State $\eta_{k+1}=\xi_{k+1}+\beta_kP\eta_k$, where \Comment{$P:T_{x_k}\!M\rightarrow T_{x_{k+1}}\!M$}
  \begin{equation*}
    \beta_k:=\max \left\{ 0, \frac{\left\langle Df \left( x_{k+1} \right)-P \left[ Df \left( x_k \right) \right], \left[ Df \left( x_{k+1} \right) \right]^+ \right\rangle} {\left\langle Df \left( x_k \right), \left[ Df \left( x_k \right) \right]^+ \right\rangle}\right\}
  \end{equation*}
  \State $k\gets k+1$
  \EndWhile
  \State \Return Sequence of iterates $\left\{ x_k \right\}$
 \end{algorithmic}
\end{algorithm}

\begin{remark}
  For Minkowski spaces (including Lorentzian spaces) with the standard orthonormal basis, both steepest descent and conjugate gradient methods coincide with their counterparts on standard Euclidean spaces, since they share identical descent directions, parallel-transports, and Hessians of the objective function. 
\end{remark}

\begin{remark}
  \cref{alg:cg-semi-mfld} can also be applied to self-concordant barrier functions for interior point methods, when the objective function is not necessarily strictly convex but has non-degenerate Hessians. In this context, where the semi-Riemannian metric tensor is given by the Hessian of the objective function, \cref{alg:cg-semi-mfld} can be viewed as a hybrid of Newton and conjugate gradient methods, in the sense that the ``steepest descent directions'' are determined by the Newton equations but the actual descent directions are combined using the methodology of conjugate gradient methods. To the best of our knowledge, such a hybrid algorithm has not been investigated in existing literature.
\end{remark}

\subsection{Metric Independence of Second Order Methods}
\label{sec:first-and-second-order-algor}

In this subsection we consider two prototypical second-order optimization methods on semi-Riemannian manifolds, namely, Newton's method and trust region method. Surprisingly, both methods turn out to produce descent directions that are independent of the choice of scalar products on tangent spaces. We give a geometric interpretation of this independence from the perspective of \emph{jets} in \Cref{sec:trust-region-method}.

\subsubsection{Semi-Riemannian Newton's Method}
\label{sec:newtons-method}

As an archetypal second-order method, Newton's method on Riemannian manifolds has already been developed in detail in the early literature of Riemannian optimization \cite[Chap 6]{AMS2009}. The rationale behind Newton's method is that the first order stationary points of a differentiable function $f:M\rightarrow\mathbb{R}$ are in one-to-one correspondence with the minimum of $\left\| \nabla f \right\|^2=\left\langle \nabla f, \nabla f \right\rangle$ when the metric is positive-definite (i.e., when $M$ is a Riemannian manifold). Thus by choosing the direction $V$ to satisfy the Newton equation $\nabla_{V}\nabla f=-\nabla f$ we ensure that $V$ is a descent direction
\begin{equation*}
  V \left\langle \nabla f, \nabla f \right\rangle = 2 \left\langle \nabla_V \nabla f, \nabla f \right\rangle=-2 \left\langle \nabla f, \nabla f \right\rangle = -2 \left\| \nabla f \right\|^2
\end{equation*}
and the right hand side is strictly negative as long as $\nabla f\neq 0$. The main difficulty in generalizing this procedure to the semi-Riemannian setting is similar with the difficulty we faced in \Cref{sec:algorithms}: when the metric is indefinite, $Df=0$ has nothing to do with $\left\| Df \right\|=0$, and thus one can no longer find the stationary points of $f$ by minimizing $\left\| Df \right\|^2$. The approach we'll adopt to fix this issue is also similar to that in \Cref{sec:algorithms}: instead of minimizing $\left\langle Df \left( x \right),Df \left( x \right) \right\rangle$, we will focus on the coercive bilinear form $\left\langle Df \left( x \right), \left[ Df \left( x \right) \right]^+ \right\rangle$.

Let $E_1,\cdots,E_n$ be a local geodesic normal coordinate frame centered at $x\in M$, i.e. for any $1\leq i,j\leq n$
\begin{equation*}
  \left\langle E_i \left( x \right),E_j \left( x \right)\right\rangle=\epsilon_i\delta_{ij},\quad \nabla_{E_i}E_j \left( x \right)=0.
\end{equation*}
Then we have
\begin{equation}
  \label{eq:semi-riem-descent-surrogate}
  \left\langle Df \left( x \right), \left[ Df \left( x \right) \right]^+ \right\rangle=\sum_{i=1}^n \left| \left\langle Df \left( x \right), E_i \left( x \right) \right\rangle  \right|^2
\end{equation}
and thus for any tangent vector $V\in T_xM$ we have
\begin{equation*}
  \begin{aligned}
    V &\left\langle Df \left( x \right), \left[ Df \left( x \right) \right]^+ \right\rangle = 2\sum_{i=1}^n \left\langle Df \left( x \right), E_i \left( x \right) \right\rangle V \left\langle Df \left( x \right), E_i \left( x \right) \right\rangle\\
    &= 2\sum_{i=1}^n \left\langle Df \left( x \right),E_i \left( x \right)\right\rangle \left[ \left\langle D_VDf \left( x \right), E_i \left( x \right) \right\rangle+\left\langle Df \left( x \right), D_VE_i \left( x \right) \right\rangle\right]\\
    &=2\sum_{i=1}^n \left\langle Df \left( x \right),E_i \left( x \right)\right\rangle \left\langle D_VDf \left( x \right), E_i \left( x \right) \right\rangle
  \end{aligned}
\end{equation*}
where in the last equality we used the fact that $\nabla_{E_i}E_j \left( x \right)=0$ for all $1\leq i,j\leq n$. Therefore, as long as we pick $V$ to satisfy Newton's equation
\begin{equation}
  \label{eq:semi-riem-newton-equation}
  \left[D^2f \left( x \right)\right] \left( V \right) = D_VDf \left( x \right)=-Df \left( x \right)
\end{equation}
we can ensure decrease in the value of \cref{eq:semi-riem-descent-surrogate}. In other words, we can obtain a descent direction for semi-Riemannian optimization using the same Newton's equation as for Riemannian optimization, with the only difference that Riemannian gradient and Hessian get replaced with their semi-Riemannian counterparts.

\begin{algorithm}
\caption{{\sc Semi-Riemannian Newton's Method}}
\label{alg:newton-semi-mfld}
\begin{algorithmic}[1]
  \Require{Manifold $M$, objective function $f$, retraction $\mathrm{Retr}$, initial value $x_0\in M$, parameters for {\sc Linesearch}, gradient $Df$ and Hessian $D^2f$}
  \While{not converge}
  \State Obtain the descent direction by solving the Newton equation $$\left[D^2f \left( x_k \right)\right]\left(\eta_k\right)=-Df \left( x_k \right)$$
  \State $0<t_k\gets$ {\sc LineSearch}$\left( f, x_k, \eta_k \right)$ \Comment{$t_k$ is the Armijo step size}
  \State $x_{k+1}\gets \mathrm{Retr}_{x_k} \left( t_k\eta_k \right)$
  \State $k\gets k+1$
  \EndWhile
  \State \Return Sequence of iterates $\left\{ x_k \right\}$
 \end{algorithmic}
\end{algorithm}

Given that our semi-Riemannian Newton's method builds upon the ``Riemannian surrogate'' \cref{eq:semi-riem-descent-surrogate}, it is not surprising that the semi-Riemannian Newton's method reduces to the ordinary Newton's method on Minkowski spaces, and the geodesics and parallel-transports stays the same as their Riemannian counterparts (i.e. when the scalar product is positive definite). This is best illustrated in the following calculation.

\begin{example}[Semi-Riemannian Newton's Method for Minkowski Spaces]
  \label{exm:semi-riem-newton-optim}
  Recalling the definitions of semi-Riemannian gradient and Hessians from \cref{exm:semi-riem-euc}, the descent direction $\eta_k$ needed in \cref{alg:newton-semi-mfld} is determined by
  \begin{equation*}
    \I_{p,q}\nabla^2f \left( x_k \right)\eta_k=-\I_{p,q}\nabla f \left( x_k \right)\quad\Leftrightarrow\quad\eta_k=-\left[ \nabla^2f \left( x_k \right) \right]^{-1}\nabla f \left( x_k \right)
  \end{equation*}
  for all $k=0,1,2,\cdots$. This calculation made it clear that the semi-Riemannian Newton's method coincides with the standard Newton's method.
\end{example}

The metric independence demonstrated in \cref{exm:semi-riem-newton-optim} reflects a more general phenomenon of metric independence in Newton's method as formulated in  \cite[\textsection 1.6]{Renegar2001}. Though the discussion in phenomenon of metric independence in Newton's method as formulated in  \cite[\textsection 1.6]{Renegar2001} is restricted to the Riemannian case (scalar product required to be positive definite), it is straightforward to see that the metric independence persists under non-degenerate change of semi-Riemannian structures. In fact, if we denote $J \left( x_k \right)$ for the Jacobian matrix of a non-degenerate coordinate transformation at $x_k$, it is straightforward to check from the coordinate expressions of semi-Riemannian gradient and Hessian \cref{eq:riem-semiriem-grad-hess} that the Newton equation \cref{eq:semi-riem-newton-equation} in the new coordinate system takes the form $J \left( x_k \right)\left[ D^2f \left( x_k \right) \right]\left( V \right)=-J \left( x_k \right)\nabla f \left( x_k \right)$, which yields the same descent direction as \cref{eq:semi-riem-newton-equation}. In the Riemannian regime, this metric independence is often attributed to the fact that second-order approximation is independent of inner products (see e.g. \cite[\textsection 1.6]{Renegar2001}); we provide a general and unified differential geometric interpretation of this independence in terms of \emph{jets} in \Cref{sec:trust-region-method}.



\subsubsection{Jets and the Metric Independence of Trust Region Method}
\label{sec:trust-region-method}

It is well known that first-order and Newton's methods suffer from various drawbacks from a numerical optimization methods, such as slow local convergence and/or prohibitive computational cost in determining the descent direction. It is thus argued (c.f. \cite{AMS2009}, \cite{ABG2007}) that it  could be more efficient to consider successive optimization of local models of the cost function on the domain of the problem. \emph{Trust region methods}, which considers quadratic local models through approximate Taylor expansions of the cost function, fall into this category (see e.g. \cite{WN1999} and the references therein). This methodology has also been generalized to Riemannian manifolds for manifold optimization \cite{ABG2007,AMS2009,HAG2015,HS2017}. In a nutshell, at each point $x\in M$ the Riemannian trust-region method strives to find the descent direction by solving locally the quadratic optimization problem on the tangent plane $T_xM$:
\begin{equation}
\label{eq:local-model-riem}
  \min_{\eta\in T_xM\atop \left\| \eta \right\|\leq \Delta_0}m_x \left( \eta \right)=f \left( x \right)+\left\langle \nabla f \left( x \right), \eta \right\rangle+\frac{1}{2}\left[\nabla^2f \left( x \right)\right]\left( \eta, \eta \right)
\end{equation}
where $\left\langle \cdot,\cdot \right\rangle$ is the inner product specified by the Riemannian metric tensor, $\left\| \cdot \right\|$ is the induced norm, and $\Delta_0$ is the radius of the trust region which is updated through the iterations according to certain technical criteria (e.g. the geometry of the manifold, the approximation quality of the local model, etc.).

When generalizing trust region methods to semi-Riemannian optimization, again we are faced with the difficulties for the other methods discussed previously, such as the non-compactness of the ``metric ball'' of bounded radius $\Delta_0>0$. This can be resolved by introducing a positive definite inner product accompanying the indefinite metric tensor as in \Cref{sec:algorithms} and \Cref{sec:newtons-method}, then restrict the search for the descent direction to a bounded domain defined by the norm induced from the inner product. Denoting $\left\| \cdot \right\|_+$ for the induced norm on $T_xM$, the local quadratic optimization problem in the semi-Riemannian setting can be written as
\begin{equation}
\label{eq:local-model-semi-riem}
  \min_{\eta\in T_xM\atop \left\| \eta \right\|_+ \leq \Delta_0}m^{\textrm{semi}}_x \left( \eta \right)=f \left( x \right)+\left\langle D f \left( x \right), \eta \right\rangle+\frac{1}{2}\left[D^2f \left( x \right)\right]\left( \eta, \eta \right).
\end{equation}
We argue that this local quadratic model coincides with the Riemannian model \cref{eq:local-model-riem} with the (frame-field-dependent) Riemannian structure \cref{eq:induced-riem-structure}. In fact, the verification is straightforward by picking geodesic normal coordinate systems under the Riemannian and semi-Riemannian metric (which ensures the Christoffel symbols vanish at $x$) and a change-of-coordinate argument as in the proof of \cref{prop:equivalence-geo-convexity}, together with the coordinate expressions \cref{eq:riem-semiriem-grad-hess}. This implies that a trust region method based on \cref{eq:local-model-semi-riem} for the semi-Riemannian manifold $M$ can be interpreted and analyzed using more or less the same techniques in existing literature of Riemannian trust region methods. The only subtlety here is the frame dependence of locality of the Riemannian structures accompanying the semi-Riemannian metric; nevertheless, this technicality can be resolved by noticing the direct dependence of the local Riemannian structure with the smooth semi-Riemannian structure.

The argument we gave in this section can be carried out to establish the ``metric independence'' of trust region methods on manifolds. While it is certainly desirable to pick a metric on the manifold so as to enable numerical implementations of the optimization algorithms, at the end of the day the only influence of the metric enters the trust region methods through choosing the size $\Delta_0$ of the trust region, which eventually does not matter after the region radius update rules are carried out (which ultimately depends on the value distribution of the cost function only). One geometric explanation for this phenomenon is through the notion of \emph{jets} (see e.g. \cite{Saunders1989,Vakil1998,Palais2016}), which characterizes the manifold analogy of ``polynomial approximation'' for smooth functions. Though the formal invarance of under change of coordinates breaks down for derivatives greater than or equal to the second order, it turns out that one can define equivalence classes of ``Taylor polynomial expansion modulo higher order terms'' by the matching of a fixed number of lower order derivatives at a fixed point. More concretely, consider an arbitrary point $q\in M$ and denote $\left(U, \left(x^1,\cdots,x^d\right)\right)$ for a coordinate system around $q$, and assume without loss of generality that $x^j\left(q\right)=0$ for all $j=1,\cdots,d$. By a direct calculate, one can verify that the second order Taylor expansion
\begin{equation}
\label{eq:taylor-expansion-coord}
f \left(x\right) = f\left(0\right)+x^i\partial_if \left(0\right)+\frac{1}{2}x^jx^k\partial^2_{jk}f\left(0\right)+O\left(\left\|x\right\|^3\right),\quad x\in U
\end{equation}
is formally preserved under change of coordinates up to cubic polynomials. This indicates that, as long as we interpret the big-$O$ notation in \cref{eq:taylor-expansion-coord} as containing not only ``metrically'' $O \left( \left\| x \right\|^3 \right)$ terms (characterized by the local smooth structure or the metric tensor thereof) but also polynomials of degree $\geq 3$ in the components of $x\in\mathbb{R}^d$, then the expansion \cref{eq:taylor-expansion-coord} makes sense geometrically as an element in the polynomial ring modulo ideals generated by cubic polynomials. (In fact, for fixed $k\in\mathbb{N}$, the union of $k$-jets over all points on the manifold form a fibre bundle often referred to as a \emph{jet bundle}.) For the purpose of trust region methods this equivalence relation suffices for specifying local models, as equivalent polynomials (as the same jet) give rise to local models of the same \emph{order} (see e.g. \cite[Proposition 7.1.3]{AMS2009}). It then follows that, for distinct Riemannian or semi-Riemannian metrics on the same smooth manifold and under geodesic normal coordinates chosen respectively with respect to the metric structures, the local models \cref{eq:local-model-riem,eq:local-model-semi-riem} correspond to the same jet and will metrically differ from each other in terms of cubic geodesic distances only, whenever the metrics involved are all Riemannian. When at least one of the metric tensors involved is semi-Riemannian, the metric comparison has to be carried out with extra caution (e.g. with respect to the metric structure induced by another Riemannian structure) since coordinate polynomials are no longer bounded by ``semi-Riemannian norms'' of the same order, again due to the indefiniteness of the semi-Riemannian metric tensor.



\section{Semi-Riemannian Optimization on Submanifolds}
\label{sec:semi-riem-geom-sub}

Submanifolds of Euclidean spaces are most often encountered in practical applications of manifold optimization. A key difference between Riemannian and semi-Riemannian geometry is that the non-degeneracy of the metric tensor can not be inherited by sub-manifolds as easily from semi-Riemannian ambient manifolds: for a submanifold $X$ of $M$, any Riemannian metric on $M$ induces a Rimannian metric on $X$ since $g$ is positive definite at every point $x\in X$, but a semi-Rimannian metric on $M$ could become degenerate when restrict to $X$; this degeneracy is the main obstruction to finding a well-defined ``orthogonal projection'' which is essential for (i) relating gradients on the manifold with gradients in the ambient space, and (ii) defining geodesics on submanifolds. Semi-Riemannian manifolds with degeneracy are of interest to the theory of general relativity and mathematical physics; see \cite{Kupeli1987a,Kupeli1987b,Kupeli1987c,Larsen1992,Stoica2014} and the references therein. This section provides some characterization of degenerate semi-Riemannian manifolds (see \cref{defn:degenerate manifold}) in terms of their \emph{degenerate bundles} (see \cref{def:degenerate bundle}). The goal is to identify non-degenerate semi-Riemannian submanifolds of Minkowski spaces for which our algorithmic framework in \Cref{sec:semi-riem-optim} applies. As demonstrated in the computation in this section and \Cref{sec:additional-examples}, unfortunately, semi-Riemannian structures inherited from the ambient Minkowski space are degenerate for most matrix Lie groups. Nonetheless, many interesting hypersurfaces (co-dimension one submanifolds) of Minkowski spaces admit non-degenerate induced semi-Riemannian structures, or degenerate ones but with degeneracy contained in a set of measure zero; the semi-Riemannian optimization  framework introduced in \Cref{sec:semi-riem-optim} applies seamlessly to these examples, some of which we illustrate in \Cref{sec:numer-exper}.

 
\subsection{Degeneracy of Semi-Riemannian Submanifolds}
\label{sec:degen-subm}

Theories of Riemannian and semi-Riemannian geometry build upon the non-degeneracy of metric tensors. However, physical models of spacetime renders itself naturally to the occurrence of \emph{singularities}, as pointed out in general relativity \cite{Penrose1965,Hawking1966,HawkingPenrose1970,HawkingEllis1973}. A lot of work in semi-Riemannian geometry are thus devoted to the development of \emph{singular semi-Riemannian geometry} --- the geometry of semi-Riemannian manifolds with degeneracy in their metric tensors, either with constant signature \cite{Kupeli1987a,Kupeli1987b,Kupeli1987c} or more generally, with possibly variable signature \cite{Larsen1992,Stoica2014}. In special cases such as null hypersurfaces of Lorentzian manifolds, specific techniques such as \emph{rigging} \cite{GO2016,AGH2018} have been developed, but generalizing these special constructions to other degenerate semi-Riemannian submanifolds is much less straightforward, if possible at all. For the simplicity of exposition, we'll confine our discussion to the constant signature scenario regardless of whether singularities occur.

\begin{definition}
  \label{defn:vector-types}
  A symmetric bilinear form $\left\langle\cdot,\cdot \right\rangle:V\times V\rightarrow \mathbb{R}$ on a vector space $V$ is said to have \emph{signature} $\left( \kappa, \nu, \pi \right)$ if the maximum positive definite subspace is of dimension $\pi\in\mathbb{Z}_{\geq 0}$, the maximum negative definite subspace is of dimension $\nu\in\mathbb{Z}_{\geq 0}$, and the dimension of the \emph{degenerate subspace} with respect to this bilinear form
  \begin{equation*}
    V^{\perp} := \left\{ v\in V\mid \left\langle v,u \right\rangle=0\quad\forall u\in V\right\}
  \end{equation*}
  is of dimension $\kappa\in\mathbb{Z}_{\geq 0}$. A vector $0\neq v\in V$ is said to be (1) \emph{degenerate} if $v\in V$; (2) \emph{null} if $\left\langle v,v \right\rangle=0$ but $v\notin V^{\perp}$; (3) \emph{timelike} if $\left\langle v,v \right\rangle<0$; (4) \emph{spacelike} if $\left\langle v,v \right\rangle>0$.
\end{definition}
\begin{definition}
  Let $W$ be a subspace of a vector space $V$ equipped with a bilinear form $\left\langle \cdot,\cdot \right\rangle:V\times V\rightarrow\mathbb{R}$. Denote $\left( \kappa, \nu, \pi \right)$ for the type of the bilinear form on $W$ obtained from restricting $\left\langle \cdot,\cdot \right\rangle$ to $W$. We say that $W$ is (1) \emph{degenerate} if $\kappa \geq 1$; (2) \emph{nondegenerate} if $\kappa=0$; (3) \emph{timelike} if $\kappa=0$ and $\nu\geq 1$; (4) \emph{spacelike} if $\kappa=0$, $\nu=0$, and $\pi\geq 1$.
\end{definition}

\begin{definition}[Degenerate Semi-Riemannian Manifolds]\label{defn:degenerate manifold}
  A \emph{degenerate semi-Riemannian manifold} is a smooth manifold equipped with a possibly degenerate $\left( 0,2 \right)$ tensor field. This tensor field will be referred to as the \emph{degenerate metric tensor} of the degenerate semi-Riemannian manifold; the signature of the degenerate metric tensor will also be referred to as the signature of the manifold when no confusion exists. Unless otherwise specified, the degenerate metric tensor is of constant signature in the rest of this paper.
\end{definition}

When the context is clear, we will occasionally omit the adjective ``degenerate'' when referring to degenerate semi-Riemannian manifolds and the degenerate metric tensor on it, since non-degenerate semi-Riemannian manifolds are special cases of degenerate ones with $\kappa=0$.

\begin{definition}[Degenerate Bundle, \cite{Kupeli1987c} Definition 3.1]\label{def:degenerate bundle}
  The \emph{degenerate bundle} of a (possibly degenerate) semi-Riemannian manifold $\left( M,\left\langle \cdot,\cdot \right\rangle \right)$ is defined as the distribution
  \begin{equation}
    \label{eq:degen-bundle}
    M^{\perp}:=\bigcup_{x\in M} \left\{ u\in T_xM\mid \left\langle u,v \right\rangle=0\quad\forall v\in T_xM\right\}.
  \end{equation}
  We say $M$ is \emph{integrable} if the distribution $M^{\perp}$ is integrable. We denote by $M^{\perp}_x$ the linear space $\{u\in \T_x M: \langle u,v \rangle =0,~\forall v\in \T_x M\}$ and we call the set of point $x\in M$ such that $M_x^{\perp}\ne \{0\}$ the \emph{degenerate locus} of $M$.
\end{definition}

As in the setup of Riemannian manifold optimization, for practice it is of primary interest to understand the differential geometry of submanifolds of an ambient manifold for which most differential geometric quantities can be characterized explicitly. In the context of semi-Riemannian geometry, a first technical subtlety with the notion of ``semi-Riemannian submanifolds'' is that the induced semi-Riemannian metric tensor may well suffer from certain degeneracy even when the ambient semi-Riemannian geometry is non-degenerate. The main difficulty lies at the non-existence of a canonical ``orthogonal projection'' from the ambient to the submanifold tangent spaces --- this complicates the definitions of normal bundles, second fundamental forms, as well as extrinsic characterizations of intrinsic geometric concepts such as affine connections, geodesics, and parallel-translates. For instance, it is well-known that covariant derivatives on a semi-Riemannian submanifold can be obtained from calculating the covariant derivatives on the ambient semi-Riemannian manifold and then projecting the result to the tangent spaces of the submanifold (see e.g. \cite[Chapter 4, Lemma 3]{ONeill1983}), but this characterization breaks down if the projection operator can not be properly defined. In fact, on a degenerate semi-Riemannian manifold there does not exist in general a semi-Riemannian analogue of the Levi-Civita (metric-compatible and torsion-free) connection, even for a degenerate semi-Riemannian submanifold of a non-degenerate semi-Riemannian manifold. Such an analogue, if exists, is called a \emph{Koszul derivative} of the degenerate semi-Riemannian manifold; a semi-Riemannian manifold admitting a Koszul derivative is called a \emph{singular semi-Riemannian manifold} in \cite{Kupeli1987a,Kupeli1987b,Kupeli1987c}. In general, a singular semi-Riemannian manifold $M$ admits more than one Koszul derivatives, and any two Koszul derivatives on $M$ differ from each other by a map from $\Gamma \left( TM \right)\times \Gamma \left( TM \right)$ to the degenerate bundle $M^{\perp}$; see e.g. \cite[Proposition 3.5]{Kupeli1987a}. Note that though it is tempting to define a connection on a degenerate semi-Riemannian manifold through the Koszul formula \cref{eqn:Koszul formula}, the formula defines a Koszul derivative if and only if the metric tensor is Lie parallel along all sections of the degenerate bundle (\cite[Theorem 3.4]{Kupeli1987c}). Another useful (necessary but insufficient) criterion for the existence of a Koszul derivative on a degenerate semi-Riemannian manifold is the integrability the degenerate bundle: as shown in \cite[Corollary 3.6]{Kupeli1987c}, if a semi-Riemannian manifold $M$ admits a Koszul derivative, then $M^{\perp}$ is integrable.

A large class of examples of semi-Riemannian manifolds commonly encountered in scientific computation are matrix Lie groups. They admit semi-Riemannian structures of arbitrary signature since tangent bundles of Lie groups are trivial. For instance, it is straightforward to verify that the semi-Riemannian structure on $\mathbb{R}^{n\times n}$ specified in \cref{exm:matrix-semi-riem} induces a non-degenerate semi-Riemannian structure on the general linear group $\GL \left( n,\mathbb{R} \right)$, though non-degeneracy becomes evident for almost all interesting matrix subgroups of $\GL \left( n,\mathbb{R} \right)$. We demonstrate the ubiquity of such degeneracy in the following two examples; more examples of matrix Lie groups are deferred to \Cref{sec:semi-riem-geom-matrix-lie-groups}. 

\begin{example}[Indefinite Orthogonal Group]
  \label{exm:indef-orth-group}
  Let $\mathbb{N}\ni n=p+q$, $0\leq p\leq n$, and $p,q\in \mathbb{N}$. Define the indefinite orthogonal group of signature $\left( p,q \right)$ as
  \begin{equation}
    \label{eq:defn-indefinite-orth}
    O \left( p,q \right):=\left\{ A\in\mathbb{R}^{n\times n}\mid A^{\top}\I_{p,q}A=\I_{p,q} \right\}
  \end{equation}
  where $\I_{p,q}$ is defined in \cref{exm:matrix-semi-riem}. The Lie algebra of this Lie group can be easily verified as
  \begin{equation}
    \label{eq:defn-indefinite-sitefel-lie-algebra}
    \mathfrak{o}(p,q) \coloneqq \left\lbrace
X\in \mathbb{R}^{n \times n}: X^{\top} \I_{p,q} + \I_{p,q} X = 0
\right\rbrace.
\end{equation}
The tangent space at an arbitrary $A\in O \left( p,q \right)$ is thus
\begin{equation}
  \label{eq:indefinite-orth-tangent-space}
  \begin{aligned}
    T_A O \left( p,q \right)&=\left\{  AX\mid X\in\mathbb{R}^{n\times n}, X^{\top}\I_{p,q}+\I_{p,q}X=0 \right\}\\
    &=\left\{ A\I_{p,q}Y\mid Y\in\mathbb{R}^{n\times n}, Y^{\top}+Y=0 \right\}.
  \end{aligned}
\end{equation}
Equipping $\mathfrak{o}\left( p,q \right)$ with bilinear form specified in \cref{exm:matrix-semi-riem}, the Lie group structure on $O \left( p,q \right)$ induces a left-invariant semi-Riemannian metric on $O \left( p,q \right)$ by
\begin{equation}
  \label{eq:def-indef-orth-natural-semi-riem-metri}
  \begin{aligned}
  \left\langle X,Y \right\rangle_A:=&\mathrm{Tr}\left( \left(A^{-1}X\right)^{\top}\I_{p,q}A^{-1}Y \right)\quad\forall X,Y\in T_A O \left( p,q \right)\\
  =&\mathrm{Tr}\left( X^{\top}\I_{p,q}Y \right)\qquad \textrm{since $A^{\top}\I_{p,q}A=I_{p,q}$.}
  \end{aligned}
\end{equation}
For the ease of notation, we shall drop the sub-script $A\in G$ unless there is a potential risk of confusion. This semi-Riemannian metric will be referred to as the \emph{natural} semi-Riemannian metric on $O \left( p,q \right)$. The degenerate bundle of this semi-Riemannian structure can be easily determined as follows. Let $\Delta\in\mathbb{R}^{n\times n}$ be a skew-symmetric matrix such that $A\I_{p,q}\Delta\in T_A O \left( p,q \right)$ for an arbitrary $A\in O \left( p,q \right)$. Setting
\begin{equation*}
  0=\mathrm{Tr}\left( X^{\top}A^{-\top}\I_{p,q}A^{-1}\Delta \right)=\mathrm{Tr}\left( X^{\top}\I_{p,q}\Delta \right)\quad\forall X\in\mathbb{R}^{n\times n}, X^{\top}+X=0
\end{equation*}
we have that $\I_{p,q}\Delta$ must be symmetric, i.e.
\begin{equation}
  \label{eq:symmetry-condition}
  \I_{p,q}\Delta =\Delta^{\top}\I_{p,q}(=-\Delta\I_{p,q})
\end{equation}
Writing $\Delta$ in the partitioned form
\begin{equation*}
  \Delta=
  \begin{bmatrix}
    \Delta_1 & \Delta_2\\
    -\Delta_2^{\top} & \Delta_3
  \end{bmatrix}
\end{equation*}
where
\begin{equation*}
  \Delta_1\in\mathbb{R}^{p\times p}, \quad \Delta_2\in\mathbb{R}^{p\times q}, \quad \Delta_3\in\mathbb{R}^{q\times q}
\end{equation*}
satisfying
\begin{equation*}
  \Delta_1+\Delta_1^{\top}=0,\,\,\Delta_3+\Delta_3^{\top}=0.
\end{equation*}
Plugging this partitioned form into \cref{eq:symmetry-condition} gives
\begin{equation*}
  \Delta_1=0,\quad \Delta_3=0
\end{equation*}
from which it follows that the degenerate bundle of $O \left( p,q \right)$ takes the form
\begin{equation*}
  \begin{aligned}
    O \left( p,q \right)^{\perp}&=\bigcup_{A\in O \left( p,q \right)}\left\{ A\I_{p,q}\begin{bmatrix} & \Delta_2\\ -\Delta_2^{\top} & \end{bmatrix} \,\Bigg|\, \Delta_2\in\mathbb{R}^{p\times q} \right\}\\
    &=\bigcup_{A\in O \left( p,q \right)}\left\{ A\begin{bmatrix} & \Delta_2\\ \Delta_2^{\top} & \end{bmatrix} \,\Bigg|\, \Delta_2\in\mathbb{R}^{p\times q} \right\}
  \end{aligned}
\end{equation*}
In particular, this indicates that the natural semi-Riemannian structure on $O \left( p,q \right)$ is degenerate. By checking at the identity it is clear that $\left[ \mathfrak{o}\left( p,q \right), \mathfrak{o}\left( p,q \right) \right]\nsubseteq \mathfrak{o}\left( p,q \right)$, hence the degenerate bundle $O \left( p,q \right)^{\perp}$ is not integrable. It then follows from \cite[Corollary 3.6]{Kupeli1987c} that $O \left( p,q \right)$ equipped with the natural semi-Riemannian metric does not admit a Koszul derivative.
\end{example}

\begin{example}[Orthogonal Group]
  \label{exm:orth-group}
  Let $\mathbb{N}\ni n=p+q$, $0\leq p\leq n$, and $p,q\in \mathbb{N}$. The manifold structure on the orthogonal group $O \left( n \right)$ is well-known:
  \begin{equation*}
    \begin{aligned}
      O \left( n \right) &= \left\{ A\in\mathbb{R}^{n\times n} \mid AA^{\top}=A^{\top}A=I_{n}\right\}\\
      \mathfrak{o}\left( n \right)&=\left\{ X\in\mathbb{R}^{n\times n}\mid X+X^{\top} =0\right\}\\
      T_AO \left( n \right)&=\left\{ AX\mid X\in\mathbb{R}^{n\times n}, X+X^{\top}=0 \right\}=A\mathfrak{o}\left( n \right),\quad\forall A\in O \left( n \right).
    \end{aligned}
  \end{equation*}
  Equip $O \left( n \right)$ with the same left-invariant semi-Riemannian metric as in \cref{exm:indef-orth-group}:
  \begin{equation*}
    \left\langle U,V \right\rangle_A:=\mathrm{Tr}\left( \left( A^{-1}U \right)^{\top}\I_{p,q}A^{-1}V \right)\quad\forall U,V\in T_AO \left( n \right).
  \end{equation*}
  In this example, again the semi-Riemannian metric is degenerate. In fact, by a similar argument as in \cref{exm:indef-orth-group} one has
  \begin{equation}
    \label{eq:semi-riem-orth-group}
    O \left( n \right)^{\perp}=\bigcup_{A\in O \left( p,q \right)}\left\{ A\begin{bmatrix} & \Delta_2\\ -\Delta_2^{\top} & \end{bmatrix} \,\Bigg|\, \Delta_2\in\mathbb{R}^{p\times q} \right\}
  \end{equation}
  and again, $O \left( n \right)^{\perp}$ is not integrable. In fact, one can also easily verify that the involution $\left[ O \left( n \right)^{\perp}, O \left( n \right)^{\perp} \right]$ is orthogonal to $O \left( n \right)^{\perp}$ with respect to the natural Riemannian (not semi-Riemannian!) metric on $O \left( n \right)$. Again, from \cite[Corollary 3.6]{Kupeli1987c} we know that $O \left( n \right)$ with semi-Riemannian metric \cref{eq:semi-riem-orth-group} does not admit a Koszul derivative.
\end{example}

\begin{remark}
  \cref{exm:orth-group} is a special case of a more general practice: one can equip $O \left( p,q \right)$ with a semi-Riemannian structure with $\I_{p,q}$ replaced with $\I_{p',q'}$ in \cref{exm:indef-orth-group}, where $p+q=p'+q'$ but $p\neq p'$ and $q\neq q'$. Again this is due to the triviality of the tangent bundle of the Lie group $O \left( p,q \right)$ for any integers $p$ and $q$.
\end{remark}

\begin{remark}
  It is natural to ask at this point whether a given manifold of interest, such as $\O \left( p,q \right)$ or $\O \left( n \right)$, admits a semi-Riemannian structure of a particular type for which a Koszul derivative exists. We are not aware of general results of this sort. Some related work (e.g. \cite{Nomizu1979,Barnet1989,Albuquerque1998}) have been devoted to the existence of left-invariant Lorentz metrics satisfying certain curvature sign conditions, following the seminal work of Milnor \cite{Milnor1976}. Bi-invariant semi-Riemannian metrics on Lie groups have also been widely explored since the 1910s; see \cite[\S1.4]{MiolanePennec2015} for a brief survey.
\end{remark}

\begin{remark}\label{rem:semi-riem-decomp}
  Though the notion of orthogonality breaks down for degenerate semi-Riemannian submanifolds, the tangent bundle of any semi-Riemannian manifold of type $\left( \kappa,\nu,\pi \right)$ admits a direct sum decomposition $T\!M=M^{\perp}\oplus H$, where $H$ is a sub-bundle of $T\!M$ with rank $\nu+\pi$. In this case, the restriction of the semi-Riemannian metric on $H$ gives rise to a non-degenerate semi-Riemannian metric of type $\left( 0,\nu,\pi \right)$. We will fully leverage this partial non-degeneracy in the semi-Riemannian optimization algorithm presented in this paper.
\end{remark}

\subsubsection{Gradient and Hessian of Submanifolds of Minkowski Spaces}
\label{sec:grad-hess-subm}

When $M$ is a non-degenerate semi-Riemannian submanifold of a Minkowski space $\mathbb{R}^{p+q}$, gradient and Hessian of a twice differentiable function $f$ on $M$ can be computed explicitly from the gradient and Hessian of $f$ on the ambient Minkowski space $\mathbb{R}^{p+q}$, thanks to the non-degeneracy which ensures for any $x\in M$ that the tangent space $T_xM$ has an orthogonal complement in $\mathbb{R}^{p+q}$, and thus $Df$ and $D^2f$ on $\mathbb{R}^{p+q}$ can be orthogonally projected onto $T_xM$. Specifically, the same argument as in \cite[\textsection 3.6.1]{AMS2009} indicates that the semi-Riemannian gradient of $f$ on $M$ is exactly the orthogonal projection to the tangent space of $M$ of the semi-Riemannian gradient of $f$ as a function defined on the Minkowski space $\mathbb{R}^{p+q}$; a similar argument yields the fact that the Hessian of $f$ is the composition of the Hessian of $f$ on the Minkowski space $\mathbb{R}^{p+q}$ composed with the orthogonal projection from $\mathbb{R}^{p+q}$ to the tangent space of $M$.

\begin{example}[Euclidean Sphere in Minkowski Spaces]
\label{exm:euc-sphere-minkowski}
Consider the standard Euclidean sphere
\begin{equation*}
  \mathbb{S}^{p+q-1}=\left\{ x\in\mathbb{R}^{p,q}\mid x_1^2+\cdots+x_{p+q}^2=1 \right\}
\end{equation*}
as a submanifold of $\mathbb{R}^{p,q}$, the Minkowski space equipped with inner product $\I_{p,q}\in\mathbb{R}^{p+q}$ as defined in \cref{exm:euc-pq}. For any $x\in\mathbb{S}^{p+q-1}$, the tangent space $T_x\mathbb{S}^{p+q-1}$ can be specified as
\begin{equation*}
  T_x\mathbb{S}^{p+q-1}=\left\{ v\in \mathbb{R}^{p,q}\mid v^{\top}x=\left\langle v,\I_{p,q}x \right\rangle =0\right\}
\end{equation*}
and thus the projection from $\mathbb{R}^{p,q}$ to $T_x\mathbb{S}^{p+q-1}$ is
\begin{equation*}
  P_x \left( v \right):=v-\frac{\left\langle v,\I_{p,q}x \right\rangle}{\left\langle \I_{p,q}x,\I_{p,q}x \right\rangle}\I_{p,q}x=v-\frac{v^{\top}x}{x^{\top}\I_{p,q}x}\I_{p,q}x,\quad\forall x^{\top}\I_{p,q}x\neq 0.
\end{equation*}
For $x\in\mathbb{S}^{p+q-1}$ with $x^{\top}\I_{p,q}x=0$, the projection operator $P_x$ is not defined since $x$ is a null vector. Nevertheless, this occurs only for a set of measure zero on $\mathbb{S}^{p+q-1}$, which means they almost never occur in practice. In our numerical experiments on $\mathbb{S}^{p+q-1}$ (see \Cref{sec:euclidean-spheres}), we just randomly perturb the point $x$ so that the optimization trajectory stays away from the degenerate locus. This works perfectly as long as the optimum is not on the degenerate locus. If unavoidable, we can also temporarily resort to the Riemannian orthogonal projection for $x\in\mathbb{S}^{p+q-1}$ with $x^{\top}\I_{p,q}x=0$. For a twice differentiable function $f:\mathbb{S}^{p+q-1}\rightarrow\mathbb{R}$, if we denote $Df$ and $D^2f$ for the semi-Riemannian gradient and Hessian of $f$ on the ambient Minkowski space (following \cref{exm:semi-riem-euc}), then the semi-Riemannian gradient and Hessian of $f$ on $\mathbb{S}^{p+q-1}$ are $P_x \left( Df \left( x \right) \right)$ and $P_x \left( D^2 f \left( x \right) \right)$, respectively.
\end{example}

\subsubsection{Geodesics and Parallel-Transports}
\label{sec:geod-parall-transp}

Regardless of whether the semi-Riemannian submanifold under consideration is degenerate, we can define analogies of geodesics and parallel-transports on them by means of their \emph{semi-normal bundles}. To this end, for semi-Riemannian manifold $M$ and its submanifold $X$ we denote by $TM$ and $TX$ the tangent bundles of $M$ and $X$, respectively. Let $x\in X$, we define the \emph{semi-normal space} of $X$ in $M$ at $x$ to be 
\[
\SN_x(X, M) \coloneqq \{u \in T_x M: \widetilde{g}_x(u,v)=0,~\forall v\in T_x X \}.
\] 
We also define the \emph{semi-normal distribution} of $X$ in $M$ to be 
\[
\SN(X,M) \coloneqq \bigsqcup_{x\in X} \SN_x(X,M).
\]
Consider the linear map 
\[
\SN_x(X,M) \hookrightarrow \T_x M \to \T_xM/ \T_xX
\]
where the first map is the inclusion and the second map is the quotient map. The following observation is straightforward by definition.

\begin{lemma}\label{lemma:SN v.s. X perp}
Fibres of the degenerate bundle of $X$ (c.f. \cref{def:degenerate bundle}) at $x\in X$ can be written as
\[
X_x^\perp = \SN_x(X,M) \cap \T_x X.
\]
In particular, if $X$ is an open submanifold of $M$, then $X$ is a non-degenerate semi-Riemannian submanifold of $M$. 
\end{lemma}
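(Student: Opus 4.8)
The plan is to observe that the asserted identity is essentially a rewriting of the relevant definitions, and then to extract the non-degeneracy of open submanifolds as an immediate corollary. The single point that must be kept straight is that the degenerate fibre $X_x^\perp$ is computed with respect to the \emph{induced} metric on $X$, which by construction is nothing but the restriction of the ambient scalar product $\widetilde{g}_x$ to the subspace $T_xX\subseteq T_xM$.

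First I would unwind the definition of $X_x^\perp$ from \cref{def:degenerate bundle}, applied to the semi-Riemannian manifold $X$: a vector $u$ lies in $X_x^\perp$ if and only if $u\in T_xX$ and $\widetilde{g}_x(u,v)=0$ for every $v\in T_xX$. On the other hand, $\SN_x(X,M)$ consists by definition of exactly those $u\in T_xM$ satisfying $\widetilde{g}_x(u,v)=0$ for all $v\in T_xX$, so intersecting with $T_xX$ imposes precisely the single extra requirement $u\in T_xX$. Comparing the two descriptions yields the set equality $X_x^\perp=\SN_x(X,M)\cap T_xX$ directly, with no further computation needed; I would present it as a one-line match of the defining conditions (or, if one prefers, a trivial double inclusion).

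For the ``in particular'' clause I would use the fact that when $X$ is open in $M$, the inclusion of tangent spaces $T_xX\hookrightarrow T_xM$ is an isomorphism at every $x\in X$, so $T_xX=T_xM$. The first part then gives $X_x^\perp=\SN_x(X,M)\cap T_xM$, and since $\SN_x(X,M)\subseteq T_xM$ this collapses to $X_x^\perp=\SN_x(X,M)=M_x^\perp$, the degenerate fibre of the ambient manifold. Because $M$ is a (non-degenerate) semi-Riemannian manifold, its metric tensor is non-degenerate, i.e. $M_x^\perp=\{0\}$; hence $X_x^\perp=\{0\}$ for all $x\in X$, which is exactly the claim that $X$ is a non-degenerate semi-Riemannian submanifold of $M$.

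I do not expect a genuine obstacle here, since both assertions reduce to definition-chasing; the only place demanding care is the appeal to non-degeneracy of $M$ in the second part. Indeed, the conclusion fails verbatim if $M$ is permitted to be a \emph{degenerate} semi-Riemannian manifold, so I would flag explicitly that we are invoking the standing assumption that the ambient metric tensor is non-degenerate, which is what forces $M_x^\perp=\{0\}$.
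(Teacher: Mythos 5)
Your proof is correct and matches the paper's reasoning: the paper itself declares the lemma ``straightforward by definition,'' and your argument is exactly that definition-chase, with the intersection identity following from comparing the defining conditions and the open-submanifold case reducing via $T_xX = T_xM$ to the non-degeneracy of the ambient metric. Your explicit flag that the ``in particular'' clause invokes the standing non-degeneracy of $M$ (and would fail for a degenerate ambient metric) is a sensible addition but does not change the route.
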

Hence we have an injective map
\[
\SN_x(X,M)/X_x^\perp \hookrightarrow \T_xM/ \T_xX
\]
and thus $\SN(X,M)/X^\perp$ is a sub-distribution of the \emph{normal bundle} $\N(X,M)\coloneqq  \T M|_X / \T X$. If $\dim \SN_x(X,M) - \dim X_x^\perp$ is constant with respect to $x\in X$, then $\SN(X,M)$ is a sub-bundle of $\N(X,M)$ and will be referred to as the \emph{semi-normal bundle} of $X$ with respect to $M$. We define the analogy of geodesics on (possibly degenerate) semi-Riemannian submanifolds as curves with accelerations in the semi-normal bundle --- when the semi-Riemannian submanifold becomes non-degenerate these geodesics reduces to standard semi-Riemannian geodesics.

\begin{definition}\label{defn:geodesic}
For a given $x\in X$ and $\Delta\in \T_x X$, if a smooth curve $\gamma:[-\epsilon,\epsilon] \to X$ satisfies $\gamma(0) = x, \dot{\gamma}(0) = \Delta$ and 
\[ 
\frac{D}{dt}(\dot{\gamma}(t)) \in \SN_{\gamma(t)}(X,M)
\]
for all $t\in [-\epsilon,\epsilon]$, then $\gamma$ is called an embedded geodesic curve on $X$ passing through $x$ with the tangent direction $\Delta$. Here $\frac{D}{dt}(\dot{\gamma}(t))$ is the covariant derivative of $\dot{\gamma}(t)$ along $\gamma(t)$ on the ambient semi-Riemannian manifold $(M,g)$. 
\end{definition}

\begin{definition}\label{defn:parallel transport}
Let $\gamma(t)$ be a curve passing through $x=\gamma(0)$ on $X$ and let $\Delta\in \T_x X$ be a given tangent vector. A parallel transportation of $\Delta$ along the curve $\gamma(t)$ is a vector field $\Delta(t)$ such that $\Delta(0) = \Delta$ and 
\begin{equation*}
\frac{D}{dt}\left(\Delta(t)\right) \in \SN_{\gamma(t)} (X,M).
\end{equation*}
\end{definition}
We remark that on a (semi-)Riemannian manifold $(Z,g)$, a geodesic $\tau$ passing through $z\in Z$ with the tangent direction $U\in \T_z Z$ is traditionally defined by the second order ODE with initial condition:
\begin{equation}\label{eqn:geodesic via covariant derivative}
\begin{cases}
\nabla_{\dot{\tau}(t)} \dot{\tau}(t) = 0, \\
\tau(0) = x,\,\,\dot{\tau}(0) = U
\end{cases}
\end{equation}
where $\nabla$ is the covariant derivative uniquely determined by the metric $g$. In the meanwhile, a well-known fact (cf. \cite[Corollary 10]{ONeill1983}) is that if $(Z,g)$ is isometrically embedded in a (semi-)Riemannian manifold $(\overline{Z},\overline{g})$ then \cref{eqn:geodesic via covariant derivative} is equivalent to the condition that $\overline{D}/dt (\dot{\gamma}(t))$ is always perpendicular to $Z$, i.e. 
\begin{equation*}
\overline{g}\left(\frac{\overline{D}}{dt} \left(\dot{\gamma}(t)\right), V\right) =0
\end{equation*}
for all $V\in \T_{\gamma(t)} Z$. Here $\overline{D}$ is the covariant derivative on $\overline{Z}$ along the curve $\gamma(t)$. From this second perspective, \cref{defn:geodesic} and \cref{defn:parallel transport} are natural generalizations of geodesics and parallel-transports from nondegenerate to degenerate semi-Riemannian geometry. Of course, it is in general not possible to obtain closed-form expressions for the embedded geodesic curves and parallel-transports; see \Cref{sec:semi-riem-geom-matrix-lie-groups} for some examples. These definitions apply to the particular case when the semi-Riemannian structure under consideration is actually Riemannian, and thus the optimization methods are also applicable to degenerate Riemannian manifolds.

\subsection{Semi-Riemannian Hypersurfaces of Minkowski Spaces}
\label{sec:semi-riemannian-sub}

In this subsection we describe the semi-Riemannian geometry of submanifolds of codimension one in the Minkowski space $\mathbb{R}^{p,q}$ (see \cref{exm:euc-pq}), which are prototypical examples of semi-Riemannian manifolds. Throughout this subsection $X$ denotes a submanifold of $\mathbb{R}^{p,q}$. Unraveling the definition of semi-normal and normal bundles yields:
\begin{proposition}\label{prop:normal v.s. semi-normal 1}
For each $x\in X$, we have 
\[
\SN_x(X,\mathbb{R}^{p,q}) = \I_{p,q} \N_x(X,\mathbb{R}^{p+q})
\]
where
\begin{equation*}
  \N_x(X,\mathbb{R}^{p+q})=\left\{ v\in T_x\mathbb{R}^{p+q}\mid v_1w_1+\cdots+v_{p+q}w_{p+q}=0\textrm{ for all }w\in T_xX \right\}
\end{equation*}
and
\begin{equation*}
  \I_{p,q} \N_x(X,\mathbb{R}^{p+q})=\left\{ \I_{p,q}v\mid v\in N_x(X,\mathbb{R}^{p+q}) \right\}.
\end{equation*}
In particular, $\SN(X,\mathbb{R}^{p,q})$ is a vector bundle on $X$ of rank $(p+q - \dim X)$.
\end{proposition}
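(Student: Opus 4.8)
The plan is to exploit the single algebraic fact that $\I_{p,q}$ is a symmetric involution: it is diagonal with $\pm 1$ entries, so $\I_{p,q}^{\top}=\I_{p,q}$ and $\I_{p,q}^2 = I_{p+q}$. With the Minkowski metric written as $\widetilde{g}_x(u,v)=u^{\top}\I_{p,q}v$ and the Euclidean normal space defined through $v^{\top}w = v_1w_1+\cdots+v_{p+q}w_{p+q}$, the whole statement reduces to unwinding the two defining conditions and pushing $\I_{p,q}$ across. First I would prove the set equality by a direct two-inclusion argument. For the inclusion $\I_{p,q}\N_x(X,\mathbb{R}^{p+q})\subseteq \SN_x(X,\mathbb{R}^{p,q})$, take $v\in\N_x(X,\mathbb{R}^{p+q})$ and set $u=\I_{p,q}v$; then for every $w\in\T_xX$ one computes $\widetilde{g}_x(u,w)=(\I_{p,q}v)^{\top}\I_{p,q}w=v^{\top}\I_{p,q}^2 w=v^{\top}w=0$, so $u\in\SN_x(X,\mathbb{R}^{p,q})$. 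For the reverse inclusion, given $u\in\SN_x(X,\mathbb{R}^{p,q})$ put $v=\I_{p,q}u$; symmetry of $\I_{p,q}$ gives $v^{\top}w=u^{\top}\I_{p,q}w=\widetilde{g}_x(u,w)=0$ for all $w\in\T_xX$, hence $v\in\N_x(X,\mathbb{R}^{p+q})$, and $\I_{p,q}^2=I$ yields $u=\I_{p,q}v\in\I_{p,q}\N_x(X,\mathbb{R}^{p+q})$. This establishes the pointwise identity.

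For the ``in particular'' claim I would argue that the right-hand description automatically supplies constant rank and a genuine vector-bundle structure, thereby sidestepping the degeneracy concerns that motivate the section. Since the standard Euclidean inner product is positive definite, the Euclidean normal space $\N_x(X,\mathbb{R}^{p+q})$ is the honest orthogonal complement of $\T_xX$ in $\mathbb{R}^{p+q}$, so $\dim\N_x(X,\mathbb{R}^{p+q})=(p+q)-\dim X$ for every $x\in X$, and $\N(X,\mathbb{R}^{p+q})$ is the usual (smooth) Euclidean normal bundle of rank $(p+q)-\dim X$. The map $\I_{p,q}$ is a fixed invertible linear transformation of $\mathbb{R}^{p+q}$, viewed as a constant (hence smooth) bundle automorphism of the trivial bundle $X\times\mathbb{R}^{p+q}$; applying it fibrewise to $\N(X,\mathbb{R}^{p+q})$ preserves dimension on each fibre and maps the subbundle smoothly onto $\SN(X,\mathbb{R}^{p,q})$. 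Therefore $\dim\SN_x(X,\mathbb{R}^{p,q})=(p+q)-\dim X$ is constant in $x$ and $\SN(X,\mathbb{R}^{p,q})$ is a vector bundle of the asserted rank.

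I do not expect a genuine obstacle here: the argument is elementary once one observes the involution identity $\I_{p,q}^2=I$. The only point worth flagging is conceptual rather than technical, namely why constant rank holds at all even though the defining condition for $\SN_x$ uses the possibly degenerate semi-Riemannian metric. The identity $\SN_x(X,\mathbb{R}^{p,q})=\I_{p,q}\N_x(X,\mathbb{R}^{p+q})$ converts the semi-Riemannian orthogonality condition into a positive-definite (Euclidean) one, whose complement dimension is always $(p+q)-\dim X$; the potential degeneracy of the induced metric on $X$ therefore never enters the computation of the semi-normal space, and is instead confined to the separate quantity $X_x^{\perp}=\SN_x(X,M)\cap\T_xX$ studied in \cref{lemma:SN v.s. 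X perp}.
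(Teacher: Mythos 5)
Your proof is correct and is essentially the argument the paper intends: the paper states the proposition as immediate from ``unraveling the definitions,'' and its written proof of the matrix Lie group analogue (\cref{prop:normal v.s. semi-normal}) uses exactly your mechanism of pushing the symmetric involution $\I_{p,q}$ across the pairing, with the rank claim following from the constant rank of the Euclidean normal bundle under the fibrewise isomorphism $\I_{p,q}$, as in the paper's commutative diagram \cref{diag:normal v.s. semi-normal}. Your closing remark correctly identifies why degeneracy of the induced metric on $X$ is irrelevant here and only enters through $X_x^{\perp}=\SN_x(X,M)\cap \T_xX$ as in \cref{lemma:SN v.s. X perp}.
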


\begin{corollary}
Let $X\subseteq \mathbb{R}^{p,q}$ be a \emph{hypersurface} (submanifolds of co-dimension one), and $x\in X$. Then either $X_x^\perp = \{0\}$ or $X_x^\perp = \SN_x(X,\mathbb{R}^{p,q}) = \I_{p,q} \N_x(X,\mathbb{R}^{p+q})$.
\end{corollary}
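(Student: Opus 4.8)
The plan is to reduce the stated dichotomy to the elementary fact that a subspace of a one-dimensional vector space is either trivial or the whole space; the codimension-one hypothesis is precisely what makes the relevant semi-normal space one-dimensional, and the two results already proved in this subsection supply everything else.

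First I would record the dimension count. Since $X$ is a hypersurface, $\dim X = p+q-1$, so the Euclidean normal space $\N_x(X,\mathbb{R}^{p+q})$ is one-dimensional. By \cref{prop:normal v.s. semi-normal 1} the semi-normal space satisfies $\SN_x(X,\mathbb{R}^{p,q}) = \I_{p,q}\N_x(X,\mathbb{R}^{p+q})$; because $\I_{p,q}$ is an invertible linear map it preserves dimension, so $\SN_x(X,\mathbb{R}^{p,q})$ is also one-dimensional. This is exactly the rank-$(p+q-\dim X)$ assertion at the end of \cref{prop:normal v.s. semi-normal 1}, specialized to $\dim X = p+q-1$.

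Next I would invoke \cref{lemma:SN v.s. X perp}, which expresses the fibre of the degenerate bundle (c.f. \cref{def:degenerate bundle}) as $X_x^\perp = \SN_x(X,\mathbb{R}^{p,q}) \cap \T_x X$. Thus $X_x^\perp$ is a linear subspace of the one-dimensional space $\SN_x(X,\mathbb{R}^{p,q})$. A subspace of a one-dimensional vector space is either $\{0\}$ or the entire space, so either $X_x^\perp = \{0\}$ or $X_x^\perp = \SN_x(X,\mathbb{R}^{p,q}) = \I_{p,q}\N_x(X,\mathbb{R}^{p+q})$, which is the claimed alternative.

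There is essentially no obstacle here: once the codimension-one hypothesis is converted into the one-dimensionality of $\SN_x(X,\mathbb{R}^{p,q})$ via \cref{prop:normal v.s. semi-normal 1}, the dichotomy is purely linear-algebraic. The only point that warrants an explicit line of justification is that $\I_{p,q}$, being invertible, carries the one-dimensional Euclidean normal line to a one-dimensional semi-normal line; every other step follows immediately from the two cited results.
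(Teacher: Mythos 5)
Your proposal is correct and follows essentially the same route as the paper's own proof: both invoke \cref{lemma:SN v.s. X perp} to write $X_x^\perp = \SN_x(X,\mathbb{R}^{p,q}) \cap \T_x X$ and \cref{prop:normal v.s. semi-normal 1} to see that $\SN_x(X,\mathbb{R}^{p,q}) = \I_{p,q}\N_x(X,\mathbb{R}^{p+q})$ is one-dimensional, so the dichotomy is immediate. Your only addition is spelling out the linear-algebra step (invertibility of $\I_{p,q}$ preserving dimension) that the paper leaves implicit.
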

\begin{proof}
  By \cref{lemma:SN v.s. X perp} we have $X_x^\perp=\SN_x(X,\mathbb{R}^{p,q})\cap T_xX$, but by \cref{prop:normal v.s. semi-normal 1}, we know $\SN_x(X,\mathbb{R}^{p,q})=\I_{p,q} \N_x(X,\mathbb{R}^{p+q})$ is one-dimensional.
\end{proof}

\begin{example}[Euclidean Spheres in Minkowski Spaces]
\label{exm:euc-sphere}
Consider as in \cref{exm:euc-sphere-minkowski} the hypersurface
\begin{equation*}
  \mathbb{S}^{p+q-1}=\left\{ x\in\mathbb{R}^{p,q}\mid x_1^2+\cdots+x_{p+q}^2=1 \right\}\subseteq \mathbb{R}^{p,q}.
\end{equation*}
Direct calculation yields
\begin{equation*}
  \begin{aligned}
    \N_x(\mathbb{S}^{p+q-1},\mathbb{R}^{p+q}) &= \left\lbrace \lambda x: \lambda\in \mathbb{R} \right\rbrace,\\
    \SN_x(\mathbb{S}^{p+q-1},\mathbb{R}^{p,q}) &= \I_{p,q}\N_x(\mathbb{S}^{p+q-1},\mathbb{R}^{p+q})=\left\lbrace \lambda \I_{p,q} x: \lambda\in \mathbb{R}\right\rbrace\\
    T_x \mathbb{S}^{p+q-1}&=\left\{ v\in\mathbb{R}^{p,q}\mid v_1x_1+\cdots+v_{p+q}x_{p+q}=0 \right\}
  \end{aligned}
\end{equation*}
and thus
\begin{equation*}
  \begin{aligned}
    (\mathbb{S}^{p+q-1})_x^\perp &= \SN_x(\mathbb{S}^{p+q-1},\mathbb{R}^{p,q}) \cap T_x \mathbb{S}^{p+q-1}\\
&=\left\{ \lambda\I_{p,q}x\mid \lambda\in\mathbb{R},\,\,x_1^2+\cdots+x_p^2=x_{p+1}^2+\dots+x_{p+q}^2 \right\}\\
&=
\begin{cases}
\displaystyle \SN_x(\mathbb{S}^{p+q-1},\mathbb{R}^{p,q})=\I_{p,q}\N_x(\mathbb{S}^{p+q-1},\mathbb{R}^{p+q}),&\text{if}~x^{\top}\I_{p,q}x=0,\\
\left\lbrace 0 \right\rbrace,&\text{otherwise}.
\end{cases}
  \end{aligned}
\end{equation*}
\end{example}

It is conceivable that hypersurfaces, and in particular those linear ones --- known as \emph{hyperplanes} --- play an important role in semi-Riemannian geometry as they can provide rich yet elementary examples of non-degenerate semi-Riemannian sub-manifolds. In fact, generically speaking, hyperplanes inherit non-degenerate semi-Riemannian structures from the ambient Minkowski spaces; we defer a simple proof to supplementary materials. It makes use of a handy criterion for the non-degeneracy of semi-Riemannian structures on hypersurfaces which we establish as follows. First of all, we point out that \cref{prop:normal v.s. semi-normal 1} can be equivalently interpreted in terms \emph{Gauss maps}: for closed sub-manifolds $X\subset \mathbb{R}^{p,q}$ with $\operatorname{dim} X<p+q$, denote $m:= p + q - \operatorname{dim}\left( X \right)$ and define the \emph{Gauss map} 
\[
\N:X\to \Gr(m,p+q),\quad \N(x) = \N_x(X,\mathbb{R}^{p+q}).
\]
and the \emph{semi-Gauss map}
\[
\SN: X \to \Gr(m,p+q),\quad \SN(x)= \SN_x(X,\mathbb{R}^{p,q}).
\]
\cref{prop:normal v.s. semi-normal 1} states essentially the commutativity of the following diagram:
\begin{equation}\label{diag:normal v.s. semi-normal}
     \begin{tikzcd}
   X \arrow{r}{\SN} \arrow{d}[anchor=center,xshift=-2ex,yshift = 0ex]{\N}  &  \Gr(m,p+q)\\
   \Gr(m,p+q) \arrow{ru}[anchor=center,xshift=3ex,yshift = -1.5ex]{\I_{p,q}} 
     \end{tikzcd}
\end{equation}
Denote by $\mathcal{V}$ the quadratic hypersurface in $\mathbb{P}\mathbb{R}^{p+q}$ defined by
\begin{equation*}
  \mathcal{V}:=\left\{ \left( x_1,\cdots,x_p,y_1,\cdots,y_q \right)\in \mathbb{P}\mathbb{R}^{p+q}\,\Bigg|\,\sum_{j=1}^p x_j^2 - \sum_{j=1}^q y_j^2 = 0 \right\}.
\end{equation*}
The degeneracy of semi-Riemannian structures on hypersurfaces is totally determined by the intersections of semi-normal bundles with $\mathcal{V}$. More concretely, it follows directly from the definitions that
\begin{proposition}\label{prop:criterion-non-degeneracy}
If $\dim X = p+q -1$, then $\N^{-1}(\mathcal{V})$ is the degenerate locus of $X$. In particular, $X$ is non-degenerate if and only if $\mathcal{V} \cap \N(X) = \emptyset$, where $\N(X)$ is the image of the Gauss map of $X$. 
\end{proposition}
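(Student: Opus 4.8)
The plan is to prove the pointwise statement first---that the degenerate locus of $X$ coincides with $\N^{-1}(\mathcal{V})$---after which the global non-degeneracy criterion is immediate. First I would fix $x \in X$ and recall from \cref{def:degenerate bundle} that $x$ lies in the degenerate locus exactly when $X_x^\perp \neq \{0\}$. Since $\dim X = p+q-1$, \cref{prop:normal v.s. semi-normal 1} gives that $\SN_x(X,\mathbb{R}^{p,q}) = \I_{p,q}\N_x(X,\mathbb{R}^{p+q})$ is one-dimensional, and the corollary to \cref{prop:normal v.s. semi-normal 1} reduces the situation to the dichotomy $X_x^\perp = \{0\}$ or $X_x^\perp = \SN_x(X,\mathbb{R}^{p,q})$. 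Because $X_x^\perp = \SN_x(X,\mathbb{R}^{p,q}) \cap T_x X$ by \cref{lemma:SN v.s. X perp} and $\SN_x$ is a line, the condition $X_x^\perp \neq \{0\}$ is therefore equivalent to the single containment $\SN_x(X,\mathbb{R}^{p,q}) \subseteq T_x X$.

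Second, I would make this containment explicit. Writing $\N_x(X,\mathbb{R}^{p+q}) = \operatorname{span}(v)$ for a nonzero Euclidean normal vector $v$, the tangent space is precisely the Euclidean orthogonal complement $T_x X = \{w : w^\top v = 0\}$, while the semi-normal line is $\SN_x(X,\mathbb{R}^{p,q}) = \operatorname{span}(\I_{p,q} v)$. Hence $\SN_x \subseteq T_x X$ holds if and only if $(\I_{p,q} v)^\top v = 0$, i.e. if and only if $v^\top \I_{p,q} v = 0$, using that $\I_{p,q}$ is symmetric.

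Third, I would identify this with membership in the quadric. The equation $v^\top \I_{p,q} v = 0$ reads $-\sum_{j=1}^p v_j^2 + \sum_{j=p+1}^{p+q} v_j^2 = 0$, which after negation is exactly the defining equation of $\mathcal{V}$; thus $v^\top \I_{p,q} v = 0$ if and only if $[v] = \N(x) \in \mathcal{V}$, that is, $x \in \N^{-1}(\mathcal{V})$. Chaining the three equivalences shows the degenerate locus equals $\N^{-1}(\mathcal{V})$, and then $X$ is non-degenerate precisely when this locus is empty, i.e. when $\mathcal{V} \cap \N(X) = \emptyset$.

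The calculation is essentially routine; the only points needing care---and so the main (minor) obstacle---are the bookkeeping in the first step, where the corollary and \cref{lemma:SN v.s. X perp} are used to collapse degeneracy to a single one-dimensional containment, and the observation in the second step that $T_x X$ is the \emph{Euclidean} orthogonal complement of the Gauss-map normal $v$, which is exactly what makes the pairing $(\I_{p,q} v)^\top v$ reproduce the quadratic form $v^\top \I_{p,q} v$. One should also double-check that the sign convention in the definition of $\mathcal{V}$ is compatible with $\I_{p,q}$, but since the defining relation is homogeneous and set equal to zero, the overall sign is immaterial.
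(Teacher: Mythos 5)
Your proposal is correct, and it takes the same route the paper implicitly intends: the paper offers no written proof of \cref{prop:criterion-non-degeneracy}, asserting only that it ``follows directly from the definitions,'' and your argument is exactly the definitional unwinding that assertion presupposes --- using \cref{lemma:SN v.s. X perp} and the corollary to \cref{prop:normal v.s. semi-riem-grad-hess} (more precisely, the corollary to \cref{prop:normal v.s. semi-normal 1}) to reduce degeneracy at $x$ to the one-dimensional containment $\SN_x(X,\mathbb{R}^{p,q})\subseteq \T_xX$, and then computing $(\I_{p,q}v)^{\top}v = v^{\top}\I_{p,q}v$ against the Euclidean normal $v$ to identify this with $\N(x)\in\mathcal{V}$. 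Your closing remarks on the sign convention and on $\T_xX$ being the \emph{Euclidean} orthogonal complement of $v$ are precisely the right points of care, and the argument is complete as written.
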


In the remainder of this section we provide two classes of hypersurfaces, namely, \emph{pseudo-spheres} and \emph{pseudo-hyperbolic spaces}, in the Minkowski space $\mathbb{R}^{p,q}$ that are different from hyperplanes. For both examples we obtain closed form expressions for embedded geodesic curves and parallel-transports (see \cref{defn:geodesic} and \cref{defn:parallel transport}) needed for implementing the algorithmic framework proposed in \Cref{sec:semi-riem-optim}. Numerical experiments demonstrating the efficacy of the semi-Riemannian optimization framework on these hypersurfaces can be found in \Cref{sec:numer-exper}.

\subsubsection{Pseudo-spheres}
\label{sec:pseudo-spheres}
Let $\mathbb{S}^{p,q}$ be the hypersurface in $\mathbb{R}^{p,q}$ defined by the equation 
\[
-\sum_{j=1}^p x_j^2 + \sum_{j=1}^q y_j^2 = 1.
\]
Here we write $x\in \mathbb{R}^{p,q}$ as $x = (x_1,\dots, x_p,y_1,\dots, y_q)$. In the literature, $\mathbb{S}^{p,q}$ is called the \emph{unit pseudo-sphere} in $\mathbb{R}^{p,q}$, and $\mathbb{S}^{1,q}$ (resp. $\mathbb{S}^{p,1}$) is known asx the de Sitter (resp. Anti-de Sitter) space. The tangent space $\T_x \mathbb{S}^{p,q}$ is characterized by 
\[
\T_x \mathbb{S}^{p,q} = \left\lbrace
(u,v)\in \mathbb{R}^{p,q}: -\sum_{j=1}^p x_j u_j + \sum_{j=1}^q y_j v_j = 0
\right\rbrace
\]
for each $x = (x_1,\dots, x_p,y_1,\dots,y_q)\in \mathbb{R}^{p,q}$. Hence we also have 
\[
\N_x(\mathbb{S}^{p,q},\mathbb{R}^{p,q}) =
\left\lbrace
\lambda \I_{p,q} x :\lambda\in \mathbb{R}
\right\rbrace,\quad \SN_x(\mathbb{S}^{p,q},\mathbb{R}^{p+q}) =\left\lbrace
\lambda x:\lambda\in \mathbb{R}
\right\rbrace.
\]
This together with \cref{prop:criterion-non-degeneracy} implies the following
\begin{lemma}\label{lemma:nondegeneracy pseudo-sphere}
For any positive integers $p$ and $q$, $\mathbb{S}^{p,q}$ is a non-degenerate semi-Riemannian sub-manifold of $\mathbb{R}^{p,q}$.
\end{lemma}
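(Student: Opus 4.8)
The plan is to reduce the statement to the non-degeneracy criterion already established in \cref{prop:criterion-non-degeneracy}, which for a hypersurface says that $X$ is non-degenerate if and only if its (Euclidean) Gauss image avoids the null quadric $\mathcal{V}$. Since $\mathbb{S}^{p,q}$ is the level set $F^{-1}(1)$ of the single function $F(x,y) = -\sum_{j=1}^p x_j^2 + \sum_{j=1}^q y_j^2$, its Euclidean normal at a point is spanned by $\nabla F = 2\,\I_{p,q}\,x$, so the Gauss map sends $x$ to the projective point $\N(x) = [\I_{p,q}x]$, in agreement with the normal-space formula recorded just before the statement. By \cref{prop:criterion-non-degeneracy} it then suffices to verify that $\N(\mathbb{S}^{p,q}) \cap \mathcal{V} = \emptyset$, i.e. that $[\I_{p,q}x]$ never lies on $\mathcal{V}$.

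The computation I would carry out is a direct substitution. Writing the normal vector in coordinates, $\I_{p,q}x = (-x_1,\dots,-x_p,y_1,\dots,y_q)$, and plugging it into the quadratic form $\sum_{j=1}^p(\cdot)_j^2 - \sum_{j=1}^q(\cdot)_j^2$ that cuts out $\mathcal{V}$, the signs square away and the value collapses to $\sum_{j=1}^p x_j^2 - \sum_{j=1}^q y_j^2$. The key observation is that this is exactly $-F(x,y)$, which equals $-1$ for every $x\in\mathbb{S}^{p,q}$ by the defining equation of the pseudo-sphere.

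Since this value is $-1\neq 0$ identically, the normal direction $[\I_{p,q}x]$ fails the equation of $\mathcal{V}$ at every point; equivalently, $\I_{p,q}x$ is never a Minkowski-null vector. Hence $\N^{-1}(\mathcal{V})=\emptyset$, so by \cref{prop:criterion-non-degeneracy} the degenerate locus of $\mathbb{S}^{p,q}$ is empty, and $\mathbb{S}^{p,q}$ is a non-degenerate semi-Riemannian submanifold of $\mathbb{R}^{p,q}$ for all positive integers $p,q$.

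There is no genuinely hard step here: once the criterion is invoked the argument is a one-line substitution, and the lemma is really a corollary of \cref{prop:criterion-non-degeneracy}. The only place demanding care — and where a sign could slip — is the bookkeeping between the \emph{Euclidean} normal $\I_{p,q}x$ that enters the Gauss map and the Minkowski quadratic form defining $\mathcal{V}$; keeping track of the two occurrences of $\I_{p,q}$ is precisely what forces the relevant quantity to be (up to an overall sign) $x^\top \I_{p,q} x = 1$ rather than $0$. If one preferred to bypass the criterion entirely, the same conclusion follows intrinsically by computing $X_x^\perp = \SN_x(\mathbb{S}^{p,q},\mathbb{R}^{p,q}) \cap \T_x\mathbb{S}^{p,q} = \{\lambda x:\lambda\in\mathbb{R}\} \cap \T_x\mathbb{S}^{p,q}$ and noting that $x\notin \T_x\mathbb{S}^{p,q}$ because $(\I_{p,q}x)^\top x = x^\top \I_{p,q} x = 1 \neq 0$, so $X_x^\perp = \{0\}$.
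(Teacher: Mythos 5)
Your proposal is correct and matches the paper's argument: the paper likewise computes $\N_x(\mathbb{S}^{p,q})=\{\lambda \I_{p,q}x\}$ and $\SN_x(\mathbb{S}^{p,q})=\{\lambda x\}$ and deduces the lemma from \cref{prop:criterion-non-degeneracy}, with your substitution $\sum_j x_j^2-\sum_j y_j^2 = -1 \neq 0$ being exactly the (unwritten) verification that the Gauss image avoids $\mathcal{V}$. Your closing intrinsic check via $X_x^\perp = \SN_x \cap \T_x\mathbb{S}^{p,q} = \{0\}$ is the same computation the paper carries out explicitly for the Euclidean sphere in \cref{exm:euc-sphere}, so nothing here departs from the paper's route.
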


We now turn to investigating the embedded geodesics on $\mathbb{S}^{p,q}$.
\begin{proposition}\label{prop:geodesics psudo-sphere}
The embedded geodesic passing through $x\in \mathbb{S}^{p,q}$ with tangent direction $X\in T_x\mathbb{S}^{p,q}$ is 
\begin{equation}
\label{eq:semi-riem-geod-pseudo-sphere}
\gamma(t) = \begin{cases}
\displaystyle x \cos (t\left\| X \right\|) + \left( X/\left\| X \right\| \right) \sin(t \left\| X \right\|),&\text{if}~\langle X,X \rangle > 0,\\
\displaystyle x\cosh(t\left\| X \right\|) + \left( X/\left\| X \right\| \right) \sinh(t\left\| X \right\|),&\text{if}~\langle X,X \rangle < 0,\\
\displaystyle x + t X,&\text{otherwise}
\end{cases}
\end{equation}
where $\left\| X \right\| = \sqrt{\left| \langle X, X \rangle_x \right| }$.
\end{proposition}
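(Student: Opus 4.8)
The plan is to verify directly that the curve $\gamma(t)$ displayed in each of the three cases satisfies the defining conditions of an embedded geodesic (\cref{defn:geodesic}), and then to invoke standard ODE uniqueness to conclude that it is \emph{the} geodesic with the prescribed initial data. Two things must be checked in each case: first that $\gamma(t)$ actually lies on $\mathbb{S}^{p,q}$ for all $t$, so that it is a legitimate curve on the submanifold, and second that its ambient acceleration lies in the semi-normal space $\SN_{\gamma(t)}(\mathbb{S}^{p,q},\mathbb{R}^{p,q})$.

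The key simplification is that the ambient manifold $\mathbb{R}^{p,q}$ is flat: its Christoffel symbols vanish in the standard coordinates (\cref{exm:semi-riem-euc}), so the covariant derivative $\tfrac{D}{dt}\dot\gamma(t)$ appearing in \cref{defn:geodesic} reduces to the ordinary second derivative $\ddot\gamma(t)$. Moreover, the computation preceding \cref{lemma:nondegeneracy pseudo-sphere} identifies the semi-normal space as the line spanned by the position vector, $\SN_x(\mathbb{S}^{p,q},\mathbb{R}^{p,q})=\{\lambda x:\lambda\in\mathbb{R}\}$. Hence the geodesic condition collapses to the requirement that $\ddot\gamma(t)$ be proportional to $\gamma(t)$.

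With these reductions the verification is routine. In each case I would first record the three scalar products that control everything: $\langle x,x\rangle = 1$ (as $x\in\mathbb{S}^{p,q}$), $\langle x,X\rangle = 0$ (since $T_x\mathbb{S}^{p,q}$ is exactly the $\I_{p,q}$-orthogonal complement of $x$, so $X$ is Minkowski-orthogonal to $x$), and $\langle X,X\rangle = \pm\|X\|^2$ according to the sign. Substituting the proposed $\gamma(t)$ and expanding $\langle\gamma(t),\gamma(t)\rangle$ via $\cos^2+\sin^2=1$, via $\cosh^2-\sinh^2=1$, or by direct expansion in the null case, yields $\langle\gamma(t),\gamma(t)\rangle\equiv 1$, confirming $\gamma(t)\in\mathbb{S}^{p,q}$; differentiating twice gives $\ddot\gamma(t) = -\|X\|^2\gamma(t)$ (spacelike), $\ddot\gamma(t) = \|X\|^2\gamma(t)$ (timelike), or $\ddot\gamma(t)=0$ (null), each manifestly a multiple of $\gamma(t)$. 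The initial conditions $\gamma(0)=x$ and $\dot\gamma(0)=X$ are immediate.

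There is no genuine obstacle here; the only conceptual point worth highlighting is \emph{where the formula comes from}, which explains why the three cases arise. Along any curve satisfying $\ddot\gamma=\lambda(t)\gamma$ and $\langle\gamma,\gamma\rangle\equiv 1$, differentiating the constraint once gives $\langle\dot\gamma,\gamma\rangle=0$, so the speed $\langle\dot\gamma,\dot\gamma\rangle$ is conserved, while differentiating twice, $\tfrac{d^2}{dt^2}\langle\gamma,\gamma\rangle=0$, forces $\lambda(t)\equiv-\langle X,X\rangle$ to be the \emph{constant} $-\langle X,X\rangle$. The geodesic equation is therefore the constant-coefficient linear ODE $\ddot\gamma+\langle X,X\rangle\gamma=0$, whose solution with $\gamma(0)=x$, $\dot\gamma(0)=X$ is trigonometric, hyperbolic, or affine exactly according to the sign of $\langle X,X\rangle$, reproducing \cref{eq:semi-riem-geod-pseudo-sphere}. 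Uniqueness of this linear ODE, together with the non-degeneracy of $\mathbb{S}^{p,q}$ established in \cref{lemma:nondegeneracy pseudo-sphere}, identifies $\gamma$ as the unique embedded geodesic with the given initial data.
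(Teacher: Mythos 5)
Your proposal is correct and follows essentially the same route as the paper's proof: direct verification that $\gamma(0)=x$, $\dot\gamma(0)=X$, that $\langle\gamma(t),\gamma(t)\rangle\equiv 1$ via the identities $\langle x,x\rangle=1$, $\langle x,X\rangle=0$, and that $\ddot\gamma(t)$ is a multiple of $\gamma(t)$, hence lies in $\SN_{\gamma(t)}(\mathbb{S}^{p,q},\mathbb{R}^{p,q})$. Your closing observation---that differentiating the constraint twice forces the proportionality factor to be the constant $-\langle X,X\rangle$, so the geodesic condition reduces to a constant-coefficient linear ODE with a unique solution---is a worthwhile addition, since the paper asserts the curve is \emph{the} geodesic without actually arguing uniqueness.
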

\begin{proof}
First, it is straightforward to verify that $\gamma(0) = x$ and $\dot{\gamma}(0) = X$. Next we notice that 
\[
\gamma(t)^\mathsf{T} \I_{p,q} \gamma(t) = 1 
\]
since $x^\mathsf{T}\I_{p,q} X = 0$. This implies that $\gamma(t)$ is indeed a curve on $\mathbb{S}^{p,q}$. Lastly, by taking second derivative, we have 
\[
\ddot{\gamma}(t) = \begin{cases}
-\langle X,X \rangle \gamma(t),~\text{if}~\langle X,X \rangle \ne 0, \\
0,~\text{otherwise}\\
\end{cases}
\]
and hence $\ddot{\gamma}(t)\in \SN_{\gamma(t)}(\mathbb{S}^{p,q},\mathbb{R}^{p,q})$. Therefore, $\gamma(t)$ is the geodesic curve passing through $x$ with tangent direction $X$.
\end{proof}

We now compute the parallel translation on $\mathbb{S}^{p,q}$. Let $x$ be a point on $\mathbb{S}^{p,q}$ and let $\Delta$ be a tangent vector on $\mathbb{S}^{p,q}$ at $x$. We denote by $\gamma(t)$ the geodesic curve passing through $x$ with tangent direction $X$. Let $ \Delta(t)$ be the parallel transportation of $\Delta$ along $\gamma$. By definition, we must have that $\Delta(t)\in \T_{\gamma(t)} \mathbb{S}^{p,q}$ and $\dot{\Delta}(t)\in \SN_{\gamma(t)}(\mathbb{S}^{p,q},\mathbb{R}^{p,q})$. This implies 
\begin{align}
   \label{eqn:parallel trans Spq 1}
   \langle \Delta(t),\gamma(t) \rangle &= 0,\\ 
    \label{eqn:parallel trans Spq 2}
\langle \dot{\Delta}(t),\gamma(t) \rangle \gamma(t)&= \dot{\Delta}(t). 
\end{align}
Differentiating \cref{eqn:parallel trans Spq 1}, we obtain $\langle \dot{\Delta}(t), \gamma \rangle = -\langle \Delta(t),\dot{\gamma}(t) \rangle$ and hence 
\[
\dot{\Delta}(t) = - \langle \Delta(t),\dot{\gamma}(t) \rangle \gamma(t).
\]
Since parallel translation preserves inner product, we see that $\langle \Delta(t),\dot{\gamma}(t) \rangle = \langle \Delta, X \rangle$ and 
\begin{equation}\label{eqn:parallel trans Spq 3}
   \dot{\Delta}(t) = - \langle \Delta,X \rangle \gamma(t). 
\end{equation}
Integrating \cref{eqn:parallel trans Spq 3} and using the initial condition that $\Delta(0) = \Delta$ to get
\begin{proposition}\label{prop:parallel transport psudo-sphere}
Let $\gamma(t)$ be the geodesic passing through $x\in \mathbb{S}^{p,q}$ with tangent direction $X\in \T_x \mathbb{S}^{p,q}$. The parallel transport of $\Delta\in \T_x\mathbb{S}^{p,q}$ along the $\gamma(t)$ is 
\[
\Delta(t) = -\langle \Delta, X \rangle \int_0^t \gamma(\tau) d\tau + \Delta.
\]
More precisely, we have 
\begin{equation*}
  \begin{aligned}
    &\Delta(t) = \\
&\begin{cases}
\displaystyle -\frac{\langle \Delta, X \rangle}{\left\| X \right\|} \left[x\sin\left(t\left\| X \right\|\right) - \frac{X}{\left\| X \right\|}\cos(t\left\| X \right\|)\right] + \left(\Delta - \frac{\langle \Delta,X\rangle}{\left\| X \right\|^2}X\right),&\!\!\!\!\!\!\text{if}~\langle X,X \rangle > 0, \\
&\\
\displaystyle-\frac{\langle \Delta, X \rangle}{\left\| X \right\|} \left[x \sinh\left(t\left\| X \right\|\right) + \frac{X}{\left\| X \right\|} \cosh(t\left\| X \right\|)\right] + \left(\Delta + \frac{\langle \Delta,X \rangle}{\left\| X \right\|^2}X\right),&\!\!\!\!\!\!\text{if}~\langle X, X\rangle <0, \\
&\\
\displaystyle-\langle \Delta, X \rangle \left(tx + \frac{1}{2} t^2 X\right) + \Delta,&\!\!\!\!\!\!\text{otherwise}.
\end{cases}
  \end{aligned}
\end{equation*}
\end{proposition}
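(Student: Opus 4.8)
The discussion preceding the statement has already reduced the defining conditions of parallel transport (\cref{defn:parallel transport}) --- namely $\Delta(t)\in\T_{\gamma(t)}\mathbb{S}^{p,q}$ together with $\dot\Delta(t)\in\SN_{\gamma(t)}(\mathbb{S}^{p,q},\mathbb{R}^{p,q})$ --- to the single linear first-order ODE $\dot\Delta(t)=-\langle\Delta,X\rangle\,\gamma(t)$ of \cref{eqn:parallel trans Spq 3}, subject to $\Delta(0)=\Delta$. The plan is therefore essentially computational: integrate this ODE to obtain the integral formula $\Delta(t)=\Delta-\langle\Delta,X\rangle\int_0^t\gamma(\tau)\,d\tau$, and then substitute the closed-form geodesic $\gamma$ from \cref{prop:geodesics psudo-sphere} to produce the three-branch explicit expression.

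First I would observe that the right-hand side of \cref{eqn:parallel trans Spq 3} depends on $t$ only through the already-known curve $\gamma(t)$, and that the coefficient $\langle\Delta,X\rangle$ is a genuine constant (this is precisely the fact, established in the derivation, that the induced parallel transport preserves the scalar product, so $\langle\Delta(t),\dot\gamma(t)\rangle=\langle\Delta,X\rangle$); pulling this constant out, elementary integration yields the integral formula, with the initial condition fixing the constant of integration as $\Delta$. Next I would split into the three cases distinguished by the sign of $\langle X,X\rangle$, exactly as in \cref{prop:geodesics psudo-sphere}. In the spacelike case $\langle X,X\rangle>0$ one integrates $x\cos(\tau\|X\|)+(X/\|X\|)\sin(\tau\|X\|)$ via $\int_0^t\cos(\tau\|X\|)\,d\tau=\sin(t\|X\|)/\|X\|$ and $\int_0^t\sin(\tau\|X\|)\,d\tau=(1-\cos(t\|X\|))/\|X\|$; the timelike case $\langle X,X\rangle<0$ replaces these by the hyperbolic analogues with $\sinh,\cosh$; and the null case integrates the polynomial $x+\tau X$ to $tx+\frac{1}{2}t^2X$. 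In each branch the constant term produced by the lower limit recombines with the initial value $\Delta$ into the bracketed expressions $\Delta\mp\langle\Delta,X\rangle X/\|X\|^2$ appearing in the statement.

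The computation is elementary, so there is no severe obstacle; the only genuine care is sign bookkeeping, since $\langle X,X\rangle=\pm\|X\|^2$ and the two non-null branches differ by the sign of the integration constant ($1-\cos$ versus $\cosh-1$). As consistency checks I would (i) verify directly that the resulting $\Delta(t)$ satisfies $\langle\Delta(t),\gamma(t)\rangle=0$ for all $t$ and that $\dot\Delta(t)$ is a scalar multiple of $\gamma(t)$, confirming the defining conditions $\Delta(t)\in\T_{\gamma(t)}\mathbb{S}^{p,q}$ and $\dot\Delta(t)\in\SN_{\gamma(t)}(\mathbb{S}^{p,q},\mathbb{R}^{p,q})$; and (ii) check continuity as $\|X\|\to0$, where $\sin(t\|X\|)/\|X\|\to t$ and $(1-\cos(t\|X\|))/\|X\|^2\to t^2/2$, recovering the null branch and cross-validating the non-null formulas against the degenerate one. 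The one point meriting emphasis, rather than a true difficulty, is that the reduction to \cref{eqn:parallel trans Spq 3} is legitimate only because $\mathbb{S}^{p,q}$ is non-degenerate by \cref{lemma:nondegeneracy pseudo-sphere}, so the semi-normal construction coincides with Levi-Civita parallel transport and is metric-compatible; granting this, uniqueness of solutions to the linear ODE shows that the displayed $\Delta(t)$ is indeed the parallel transport of $\Delta$ along $\gamma$.
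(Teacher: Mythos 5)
Your proposal is correct and follows essentially the same route as the paper: the text preceding the proposition derives exactly the ODE $\dot\Delta(t)=-\langle\Delta,X\rangle\,\gamma(t)$ (using the semi-normality of $\dot\Delta$, differentiation of $\langle\Delta(t),\gamma(t)\rangle=0$, and metric-preservation to make the coefficient constant) and then integrates it against the closed-form geodesics of the pseudo-sphere, just as you do. Your additional consistency checks (tangency, semi-normality of $\dot\Delta$, and the $\|X\|\to 0$ limit recovering the null branch) go slightly beyond the paper but confirm the same computation.
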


\subsubsection{Pseudo-hyperbolic Spaces}
\label{sec:pseudo-hyperb-spac}

The unit pseudo-hyperbolic space $\mathbb{H}^{p,q}$ in $\mathbb{R}^{p,q}$ is defined by the equation 
\[
-\sum_{j=1}^p x_j^2 + \sum_{j=1}^q y_j^2 = -1.
\]
The tangent space of $\mathbb{H}^{p,q}$ at a point $x = (x_1,\dots, x_p,y_1,\dots, y_q)$ is 
\[
\T_x \mathbb{H}^{p,q} = \lbrace (u,v)\in \mathbb{R}^{p,q}: -\sum_{j=1}^p x_j u_j + \sum_{j=1}^q y_j v_j = 0 \rbrace
\]
Let $\sigma_{p,q}:\mathbb{R}^{p,q} \to \mathbb{R}^{q,p}$ be the map defined by 
\[
\sigma_{p,q}(x_1,\dots, x_p,y_1,\dots,y_q) = (y_1,\dots,y_q,x_1,\dots, x_p).
\]
It is straightforward (\cite[Lemma 24]{ONeill1983})  to verify that $\sigma_{p,q}$ is an anti-isometry between $\mathbb{H}^{p,q}$ and $\mathbb{S}^{q,p}$, whose inverse is $\sigma_{q,p}$. Therefore we have
\[
\N_x (\mathbb{H}^{p,q},\mathbb{R}^{p,q}) = \lbrace \lambda \I_{p,q}x: \lambda\in \mathbb{R}\},\quad \SN_x(\mathbb{H}^{p,q},\mathbb{R}^{p,q}) = \{\lambda x: \lambda \in \mathbb{R}\rbrace.
\] 
\begin{corollary}\label{cor:nondegeneracy pseudo-hyperbolic}
For any positive integers $p,q$, the pseudo-hyperbolic space $\mathbb{H}^{p,q}$ is a non-degenerate semi-Riemannian sub-manifold of $\mathbb{R}^{p,q}$.
\end{corollary}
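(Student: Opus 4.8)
The plan is to obtain non-degeneracy of $\mathbb{H}^{p,q}$ as an immediate consequence of the pseudo-sphere result (\cref{lemma:nondegeneracy pseudo-sphere}) by transporting it across the anti-isometry $\sigma_{p,q}$. First I would recall, as noted just above the statement, that $\sigma_{p,q}$ restricts to an anti-isometry between $\mathbb{H}^{p,q}$ and $\mathbb{S}^{q,p}$; in particular, for each $x\in\mathbb{H}^{p,q}$ its differential carries $\T_x\mathbb{H}^{p,q}$ isomorphically onto $\T_{\sigma_{p,q}(x)}\mathbb{S}^{q,p}$ while negating the scalar product, so that the restricted metric on $\T_x\mathbb{H}^{p,q}$ is precisely the negative of the restricted metric on the image tangent space. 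The key observation is that a symmetric bilinear form and its negative have the same radical, hence one is non-degenerate exactly when the other is; non-degeneracy is therefore insensitive to the sign reversal introduced by an anti-isometry. Applying \cref{lemma:nondegeneracy pseudo-sphere} with the roles of $p$ and $q$ interchanged shows $\mathbb{S}^{q,p}$ is non-degenerate, and pulling this back along $\sigma_{p,q}$ yields the claim for $\mathbb{H}^{p,q}$.

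Alternatively, and more explicitly, I would verify the criterion of \cref{prop:criterion-non-degeneracy} directly, which also serves as a sanity check. Since $\mathbb{H}^{p,q}$ is a hypersurface, it is non-degenerate precisely when its degenerate locus $\SN_x(\mathbb{H}^{p,q},\mathbb{R}^{p,q})\cap \T_x\mathbb{H}^{p,q}$ is trivial for every $x$. Using the already-computed semi-normal space $\SN_x(\mathbb{H}^{p,q},\mathbb{R}^{p,q})=\{\lambda x:\lambda\in\mathbb{R}\}$, a candidate degenerate vector $\lambda x$ lies in $\T_x\mathbb{H}^{p,q}$ iff $\lambda(-\sum_{j=1}^p x_j^2+\sum_{j=1}^q y_j^2)=0$; the defining equation of $\mathbb{H}^{p,q}$ forces this quantity to equal $-\lambda$, so $\lambda=0$. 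Equivalently, feeding the Gauss image $\N(x)=[\I_{p,q}x]$ into the quadric $\mathcal{V}$ returns $\sum_{j=1}^p x_j^2-\sum_{j=1}^q y_j^2=1\neq 0$, whence $\mathcal{V}\cap\N(\mathbb{H}^{p,q})=\emptyset$ and \cref{prop:criterion-non-degeneracy} applies.

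Since every ingredient is already in place, I do not expect a genuine obstacle here. The only point requiring (minor) care is confirming that $\sigma_{p,q}$ indeed sends $\mathbb{H}^{p,q}$ into $\mathbb{S}^{q,p}$ and reverses the metric, which is the content of the cited lemma of O'Neill; once that is granted, the remainder is a one-line sign computation. I would present the anti-isometry argument as the main proof, since it makes the duality between pseudo-spheres and pseudo-hyperbolic spaces manifest and reuses \cref{lemma:nondegeneracy pseudo-sphere} verbatim, and relegate the explicit intersection computation to a parenthetical remark.
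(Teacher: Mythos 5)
Your proposal is correct and follows essentially the same route as the paper: the paper also derives \cref{cor:nondegeneracy pseudo-hyperbolic} from the anti-isometry $\sigma_{p,q}$ between $\mathbb{H}^{p,q}$ and $\mathbb{S}^{q,p}$, using it to obtain $\SN_x(\mathbb{H}^{p,q},\mathbb{R}^{p,q})=\{\lambda x:\lambda\in\mathbb{R}\}$ and then concluding non-degeneracy exactly as in \cref{lemma:nondegeneracy pseudo-sphere} via \cref{prop:criterion-non-degeneracy}. Your parenthetical direct check (that $\lambda x\in \T_x\mathbb{H}^{p,q}$ forces $\lambda=0$, equivalently $\mathcal{V}\cap\N(\mathbb{H}^{p,q})=\emptyset$) is the same computation the paper performs for the pseudo-sphere, so both halves of your argument match the paper's reasoning.
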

Moreover, geodesics and parallel transports on $\mathbb{H}^{p,q}$ can be easily obtained from those on $\mathbb{S}^{q,p}$ via the anti-isometry $\sigma_{p,q}$.
\begin{corollary}\label{cor:geodesic pseudo-hyperbolic}
Let $x$ be a point on $\mathbb{H}^{p,q}$ and let $X$ be a tangent direction of $\mathbb{H}^{p,q}$ at $x$. The geodesic curve $\gamma(t)$ passing through $x$ with tangent direction $X$ is 
\[
\gamma(t) = \begin{cases}
 x\cosh(t\left\| X \right\|) + \left(X/\left\| X \right\|\right)\sinh(t\left\| X \right\|),&\text{if}~\langle X,X \rangle >0,\\
 x\cos(t\left\| X \right\|)) + \left(X/\left\| X \right\|\right)\sin(t \left\| X \right\|),&\text{if}~\langle X, X \rangle <0,\\
x + tX,&\text{otherwise}.
\end{cases}
\]
\end{corollary}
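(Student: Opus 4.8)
The plan is to transport the geodesic formula established for the pseudo-sphere $\mathbb{S}^{q,p}$ in \cref{prop:geodesics psudo-sphere} over to $\mathbb{H}^{p,q}$ via the anti-isometry $\sigma := \sigma_{p,q}$, whose inverse is $\sigma_{q,p}$. The essential structural fact is that $\sigma$ is a \emph{linear} map (merely a permutation of coordinates) satisfying $\langle \sigma(u),\sigma(v) \rangle_{\mathbb{R}^{q,p}} = -\langle u,v \rangle_{\mathbb{R}^{p,q}}$, so it carries the semi-Riemannian geometry of $\mathbb{H}^{p,q}$ onto that of $\mathbb{S}^{q,p}$ up to a single global sign in the metric. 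Because $\sigma$ is a diffeomorphism sending $\mathbb{H}^{p,q}$ onto $\mathbb{S}^{q,p}$, it maps $T_x\mathbb{H}^{p,q}$ isomorphically onto $T_{\sigma(x)}\mathbb{S}^{q,p}$; and since the defining condition of the semi-normal space $\SN_x$ is the vanishing of scalar products, which is insensitive to the overall sign flip, $\sigma$ likewise maps $\SN_x(\mathbb{H}^{p,q},\mathbb{R}^{p,q})$ onto $\SN_{\sigma(x)}(\mathbb{S}^{q,p},\mathbb{R}^{q,p})$.

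The first genuine step is to verify that $\sigma$ respects the notion of embedded geodesic in the sense of \cref{defn:geodesic}. Here I would use that the ambient covariant derivative on a Minkowski space is the flat coordinate derivative, so for any curve $\gamma$ the ambient acceleration is just $\ddot{\gamma}$; since $\sigma$ is affine it intertwines the accelerations of $\gamma$ and $\sigma\circ\gamma$. Combined with the two mapping properties above, this shows that $\tfrac{D}{dt}\dot{\gamma}(t)\in \SN_{\gamma(t)}(\mathbb{H}^{p,q},\mathbb{R}^{p,q})$ holds if and only if the corresponding condition holds for $\sigma\circ\gamma$ on $\mathbb{S}^{q,p}$. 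Hence $\gamma$ is an embedded geodesic on $\mathbb{H}^{p,q}$ through $x$ with tangent direction $X$ precisely when $\sigma\circ\gamma$ is the embedded geodesic on $\mathbb{S}^{q,p}$ through $\sigma(x)$ with tangent $\sigma(X)$, and so $\gamma(t)=\sigma_{q,p}(\tilde{\gamma}(t))$ where $\tilde{\gamma}$ is the geodesic supplied by \cref{prop:geodesics psudo-sphere}.

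The remainder is bookkeeping. The anti-isometry flips the causal character, $\langle \sigma(X),\sigma(X) \rangle = -\langle X,X \rangle$, so the case $\langle X,X \rangle>0$ on $\mathbb{H}^{p,q}$ maps to the timelike case on $\mathbb{S}^{q,p}$ and thus produces the $\cosh/\sinh$ branch, and symmetrically for $\langle X,X \rangle<0$; this explains why the two branches appear swapped relative to \cref{prop:geodesics psudo-sphere}. Moreover $\left\| \sigma(X) \right\| = \sqrt{\left| \langle \sigma(X),\sigma(X) \rangle \right|} = \sqrt{\left| \langle X,X \rangle \right|} = \left\| X \right\|$, so the argument $t\left\| X \right\|$ is unchanged. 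Applying the linear map $\sigma_{q,p}$ termwise and using $\sigma_{q,p}(\sigma(x))=x$ and $\sigma_{q,p}(\sigma(X))=X$ then recovers exactly the stated formula. The only step demanding care is the compatibility of $\sigma$ with \cref{defn:geodesic}, but I expect it to be routine precisely because $\sigma$ is linear—hence commutes with the flat ambient connection—and the semi-normal bundle is cut out by an orthogonality condition that survives the metric sign change intact.
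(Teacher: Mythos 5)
Your proposal is correct and follows exactly the route the paper takes: the corollary is stated there as an immediate consequence of \cref{prop:geodesics psudo-sphere} via the anti-isometry $\sigma_{p,q}$, and your write-up merely fleshes out the details (linearity of $\sigma$, flatness of the ambient connection, sign-insensitivity of the semi-normal condition, and the swap of causal characters) that the paper leaves implicit. No gaps; the bookkeeping, including $\left\| \sigma_{p,q}(X) \right\| = \left\| X \right\|$ and the exchanged $\cosh/\sinh$ and $\cos/\sin$ branches, is exactly right.
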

\begin{corollary}\label{cor:parallel transport pseudo-hyperbolic}
Let $\gamma(t)$ be the geodesic on $\mathbb{H}^{p,q}$ passing through $x\in \mathbb{H}^{p,q}$ with tangent direction $X\in \T_x \mathbb{H}^{p,q}$. The parallel transport of $\Delta\in \T_x\mathbb{H}^{p,q}$ along $\gamma(t)$ is 
\begin{equation*}
  \begin{aligned}
    &\Delta(t) = \\
&\begin{cases}
 \displaystyle\frac{\langle\Delta, X \rangle}{\left\| X \right\|} \left[x\sinh(t\left\| X \right\|) + \frac{X}{\left\| X \right\|}\cosh(t\left\| X \right\|)\right] + \left(\Delta - \frac{\langle \Delta, X\rangle}{\left\| X \right\|^2} X\right),&\text{if}~\langle X, X \rangle >0,\\
&\\
\displaystyle \frac{\langle \Delta, X \rangle}{\left\| X \right\|} \left[x\sin\left(t\left\| X \right\|\right) - \frac{X}{\left\| X \right\|}\cos(t\left\| X \right\|) \right] + \left(\Delta + \frac{\langle \Delta,X \rangle}{\left\| X \right\|^2}X\right),&\text{if}~\langle X,X\rangle <0,\\
&\\
\displaystyle\langle \Delta,X\rangle \left(tx + \frac{1}{2}t^2X\right) + \Delta,&\text{otherwise}.
\end{cases}
\end{aligned}
\end{equation*}
\end{corollary}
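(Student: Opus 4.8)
The plan is to obtain the formula on $\mathbb{H}^{p,q}$ by transporting the already-established pseudo-sphere result (\cref{prop:parallel transport psudo-sphere}) across the anti-isometry $\sigma_{p,q}\colon \mathbb{H}^{p,q}\to\mathbb{S}^{q,p}$ constructed above, exactly as the geodesic formula in \cref{cor:geodesic pseudo-hyperbolic} was obtained. The structural fact that makes this legitimate is that the Levi-Civita connection is \emph{unchanged} when the metric tensor is negated: replacing $\left\langle\cdot,\cdot\right\rangle$ by $-\left\langle\cdot,\cdot\right\rangle$ throughout the Koszul formula \cref{eqn:Koszul formula} multiplies both sides by $-1$, so by non-degeneracy the operator $D_VW$ is identical for $g$ and $-g$. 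Since $\sigma_{p,q}$ satisfies $\left\langle \sigma_{p,q}u,\sigma_{p,q}w\right\rangle_{q,p}=-\left\langle u,w\right\rangle_{p,q}$, it is therefore an isometry for the purposes of the connection, and hence carries geodesics to geodesics and parallel vector fields to parallel vector fields; moreover it maps the semi-normal spaces correctly, as $\SN_x(\mathbb{H}^{p,q},\mathbb{R}^{p,q})=\{\lambda x\}$ is sent to $\{\lambda\,\sigma_{p,q}(x)\}=\SN_{\sigma_{p,q}(x)}(\mathbb{S}^{q,p},\mathbb{R}^{q,p})$, so the embedded-geodesic and parallel-transport conditions of \cref{defn:geodesic} and \cref{defn:parallel transport} are preserved.

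Concretely, I would set $\widetilde{\gamma}=\sigma_{p,q}\circ\gamma$, $\widetilde{X}=\sigma_{p,q}(X)$, and $\widetilde{\Delta}=\sigma_{p,q}(\Delta)$, which form the corresponding geodesic, tangent direction, and initial vector on $\mathbb{S}^{q,p}$. Applying \cref{prop:parallel transport psudo-sphere} (with the roles of $p$ and $q$ interchanged) yields the parallel transport $\widetilde{\Delta}(t)$ of $\widetilde{\Delta}$ along $\widetilde{\gamma}$ in closed form, and the desired field is then $\Delta(t)=\sigma_{q,p}\bigl(\widetilde{\Delta}(t)\bigr)$, where $\sigma_{q,p}=\sigma_{p,q}^{-1}$. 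Because $\sigma_{q,p}$ is linear (a coordinate permutation), it may be applied termwise to the pseudo-sphere expression, and substituting $\sigma_{q,p}(\widetilde{x})=x$, $\sigma_{q,p}(\widetilde{X})=X$ recovers the stated three-case formula directly.

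The only genuinely delicate point, and the main obstacle, is the sign and case bookkeeping forced by the negation of the metric. Since $\left\langle\cdot,\cdot\right\rangle$ flips sign under $\sigma_{p,q}$, a direction with $\left\langle X,X\right\rangle>0$ on $\mathbb{H}^{p,q}$ corresponds to $\widetilde{X}$ with $\left\langle \widetilde{X},\widetilde{X}\right\rangle<0$ on $\mathbb{S}^{q,p}$, so the trigonometric and hyperbolic branches of \cref{prop:parallel transport psudo-sphere} are \emph{interchanged}, precisely matching the case split recorded in \cref{cor:geodesic pseudo-hyperbolic}. Likewise $\left\langle\widetilde{\Delta},\widetilde{X}\right\rangle_{q,p}=-\left\langle\Delta,X\right\rangle_{p,q}$ while $\|\widetilde{X}\|=\|X\|$, which accounts for the overall sign reversals on the coefficients $\left\langle\Delta,X\right\rangle/\|X\|$ and $\left\langle\Delta,X\right\rangle/\|X\|^2$ relative to the pseudo-sphere formula. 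Once these correspondences are tabulated, all three cases follow by substitution. As an independent sanity check one may verify the formula directly, mirroring the proof of \cref{prop:parallel transport psudo-sphere}: confirm that $\Delta(0)=\Delta$, that $\left\langle\Delta(t),\gamma(t)\right\rangle=0$ so that $\Delta(t)\in T_{\gamma(t)}\mathbb{H}^{p,q}$, and that $\dot{\Delta}(t)$ is a scalar multiple of $\gamma(t)$ and hence lies in $\SN_{\gamma(t)}(\mathbb{H}^{p,q},\mathbb{R}^{p,q})$.
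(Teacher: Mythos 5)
Your proposal is correct and follows the paper's own route: the paper derives this corollary precisely by transporting \cref{prop:parallel transport psudo-sphere} through the anti-isometry $\sigma_{p,q}$, and your sign bookkeeping (branch interchange from $\langle X,X\rangle\mapsto-\langle X,X\rangle$, coefficient flip from $\langle\Delta,X\rangle\mapsto-\langle\Delta,X\rangle$, with $\|X\|$ unchanged) reproduces all three cases exactly. Your added justifications---that the Koszul formula \cref{eqn:Koszul formula} makes the connection invariant under $g\mapsto-g$, and that $\sigma_{p,q}$, being a linear coordinate permutation, commutes with the ambient covariant derivative and maps semi-normal spaces to semi-normal spaces so the conditions of \cref{defn:geodesic,defn:parallel transport} are preserved---correctly fill in the details the paper leaves implicit.
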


\section{Numerical Experiments}
\label{sec:numer-exper}

We demonstrate in this section the feasibility of the proposed semi-Riemannian optimization framework through various conceptual or numerical experiments.

\subsection{Minkowski Spaces}
\label{sec:minkowski-spaces}

Although we know from \cref{exm:semi-riem-grad-descent-optim} that the semi-Riemannian descent direction coincides with the negative Riemannian gradient when the standard orthonormal basis is chosen and fixed at every point of $\mathbb{R}^{1,1}$, the two types of gradients nevertheless differ from each other if we follow the random orthonormal basis construction \cref{alg:semi-riem-basis-pursuit}. To illustrate the difference between Riemannian and semi-Riemannian optimization on Minkowski spaces, we solve a simple quadratic convex optimization problem
\begin{equation}
  \label{eq:toy-quadratic}
  \min_{x\in\mathbb{R}^2}x^\top Ax
\end{equation}
on $\mathbb{R}^{1,1}$ equipped with the standard semi-Riemannian metric of signature $\left(-,+\right)$. Here $A\in\mathbb{R}^{2\times 2}$ is a randomly generated symmetric positive definite matrix, and we apply both steepest descent \cref{alg:gd-semi-mfld} and conjugate gradient \cref{alg:cg-semi-mfld}, using random orthonormal bases in subrountine \cref{alg:finding-steepest-descent-dir} for finding descent directions and Armijo's rule for line search. The semi-Riemannian optimization trajectories vary from instances to instances due to the randomness in basis construction, but global convergence to the global minimum $x=\left(0,0\right)^{\top}$ is empirically observed. We illustrate in \cref{fig:minkowski-sd-cg} the comparison among trajectories of Riemannian/semi-Riemannian steepest descent and conjugate gradient algorithms for one random instance.

\begin{figure}[htbp]
  \centering
  \includegraphics[width=0.49\textwidth]{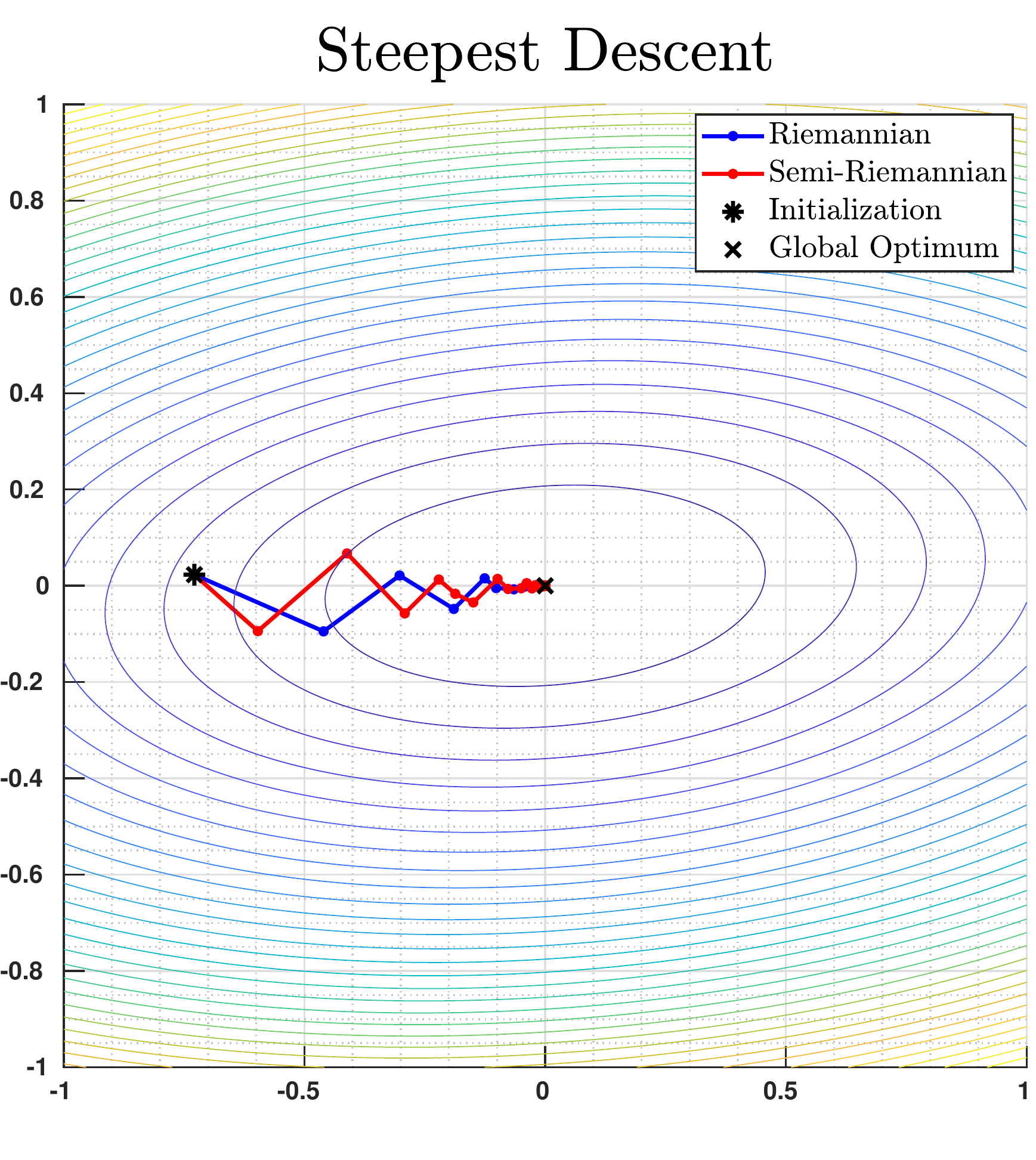}
  \includegraphics[width=0.49\textwidth]{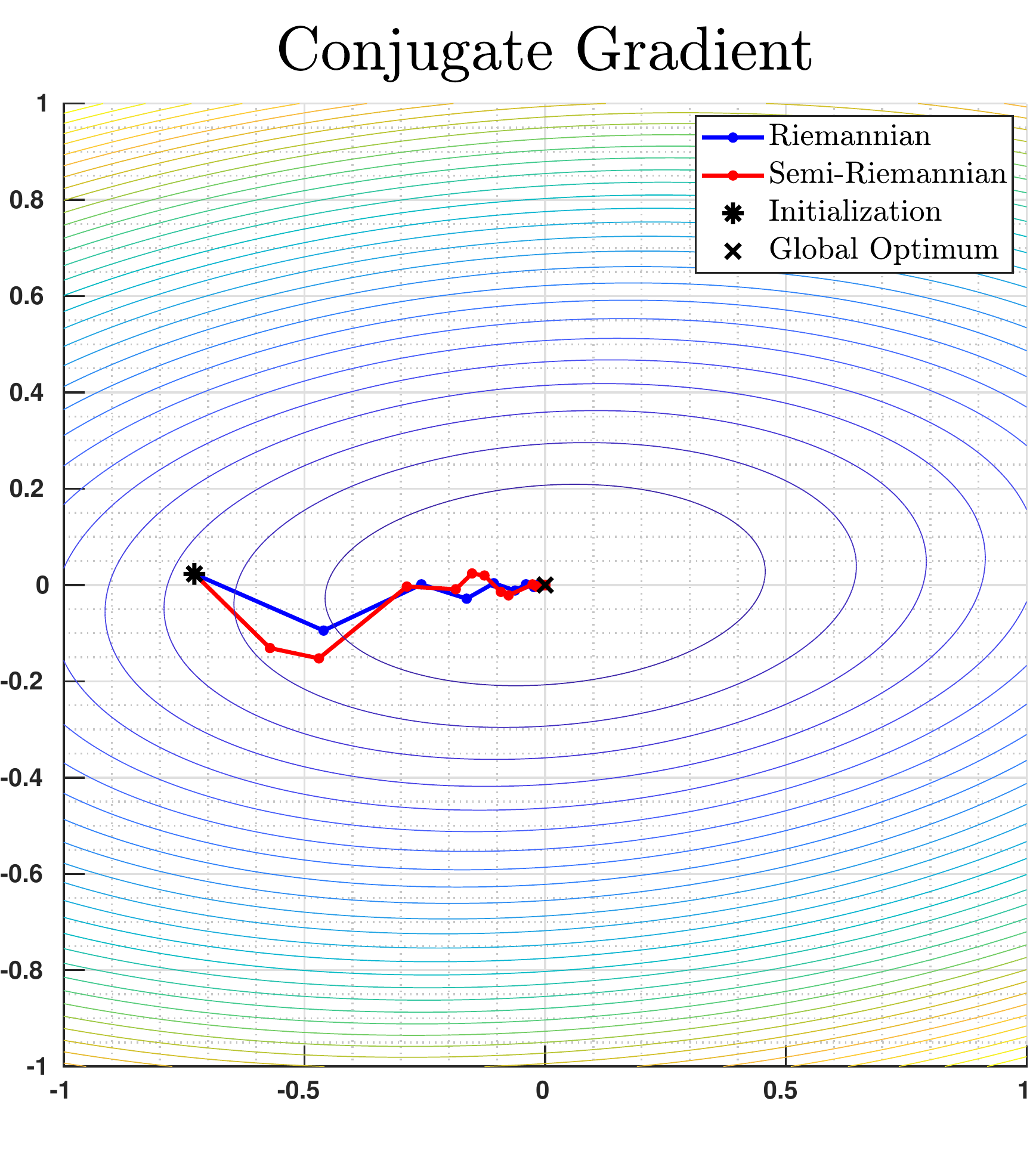}
  \caption{Riemannian and semi-Riemannian steepest descent \cref{alg:gd-semi-mfld} and conjugate gradient \cref{alg:cg-semi-mfld} optimization on the Minkowski Space $\mathbb{R}^{1,1}$ for an instance of the quadratic convex problem \cref{eq:toy-quadratic}, where $A=[0.3649,-0.1065;-0.1065,1.7427]$ and the initial point is chosen as $x_0=\left( -0.7285, 0.0230 \right)^{\top}$.}
  \label{fig:minkowski-sd-cg}
\end{figure}

\subsection{Euclidean Spheres in Minkowski Spaces}
\label{sec:euclidean-spheres}

The calculations in \cref{exm:euc-sphere} imply that the unit Euclidean sphere $\mathbb{S}^{p+q-1}$ is nondegenerate as a semi-Riemannian submanifold in $\mathbb{R}^{p,q}$ except for a degenerate locus of measure zero. Let $f:\mathbb{S}^{p+q-1}\rightarrow\mathbb{R}$ be a differentiable function on $\mathbb{S}^{p+q-1}$, and denote $\nabla f$ for the gradient of $f$ in the ambient Euclidean space. As shown in \cref{exm:semi-riem-euc}, the semi-Riemannian gradient of $f$ in the Minkowski space can be written as $Df=I_{p,q}\nabla f$, and the descent directions in the ambient space take the form $-\left[ Df \right]^{+}=-\nabla f$. Recall from \cref{exm:euc-sphere-minkowski} and \cref{exm:euc-sphere} that, unless $x$ is a null vector (which is a set of measure zero), the fibre of the degenerate bundle $\left( \mathbb{S}^{p+q-1} \right)^{\perp}$ vanishes at $x$ and thus we can project $Df$ to a unique tangent vector in $T_x\mathbb{S}^{p+q-1}$ by \cref{lem:nondegeneracy}. This indicates that the optimization trajectory falls outside of the degenerate locus with probability $1$, as long as the optimum in not inside the degenerate locus. 

To illustrate the feasibility of our proposed semi-Riemannian optimization framework, we solve the problem
\begin{equation}
  \label{eq:euc-sphere-minkowski}
  \max_{x_1^2+\cdots+x_{p+q}^2=1}x^{\top}Ax
\end{equation}
using semi-Riemannian steepest descent and conjugate gradient methods, where $A\in\mathbb{R}^{\left( p+q \right)\times \left( p+q \right)}$ is a randomly generated symmetric (but not necessarily positive definite) matrix. Obviously, the maximum of \cref{eq:euc-sphere-minkowski} is attaned at the eigenvector associated with the maximum eigenvalue of the matrix $A$, and hence we can visualize and compare the convergence dynamics of Riemannian and semi-Riemannian optimization schemes by keeping track of the $L^2$-discrepancy between solutions obtained at each iteration and the true maximizer. As there does not seem to exist explicit expressions for the semi-Riemannian geodesic and parallel-transport on $\mathbb{S}^{p+q-1}$ (see \cref{table:summary}), we use Riemannian geodesic and parallel transport on $\mathbb{S}^{p+q-1}$; generically, these choices do not essentially affect the convergence of manifold optimization algorithms, which allows for arbitrary retractions \cite{AMS2009,BAC2016} and general parallel-transports \cite{RW2012,HAG2015}. Apart from the random basis generation inherent to the local semi-Riemannain Gram-Schmidt orthonormalization \cref{alg:semi-riem-basis-pursuit}, for $p+q>2$ there also exist multiple semi-Riemannian structures on $\mathbb{R}^{p+q}$ which induce distinct semi-Riemannian structures on $\mathbb{S}^{p+q-1}$; our experimental results in \cref{fig:sphere-sd-cg} suggest that all semi-Riemannian structures ensure convergence, though the convergence rates may differ. A deeper investigation of the depenence of convergence rate on the choice of semi-Riemannian structures appears highly intriguing but is beyond the scope of this paper; we defer such exploration to future work.

\begin{figure}[htbp]
  \centering
  \includegraphics[width=0.49\textwidth]{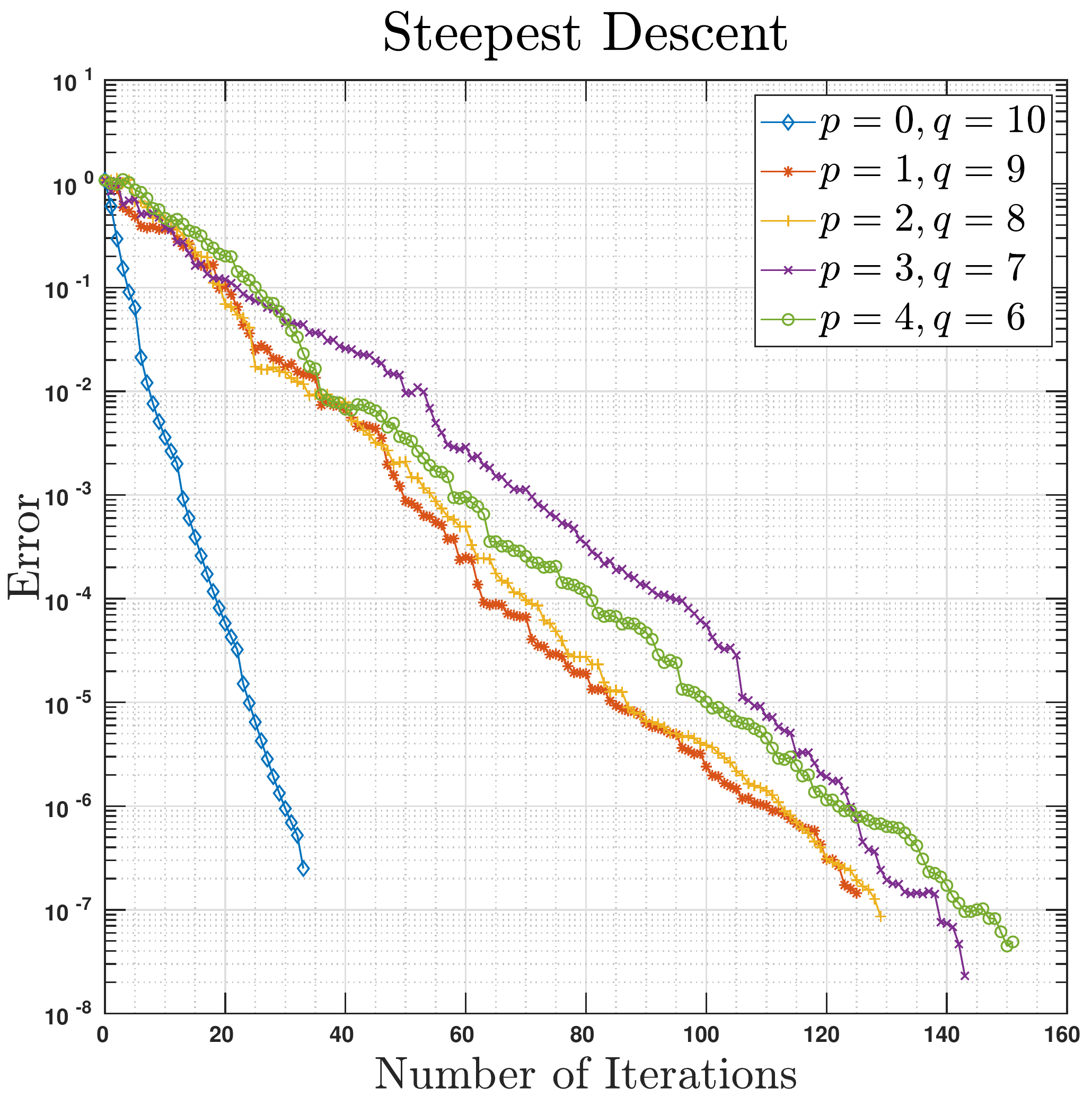}
  \includegraphics[width=0.49\textwidth]{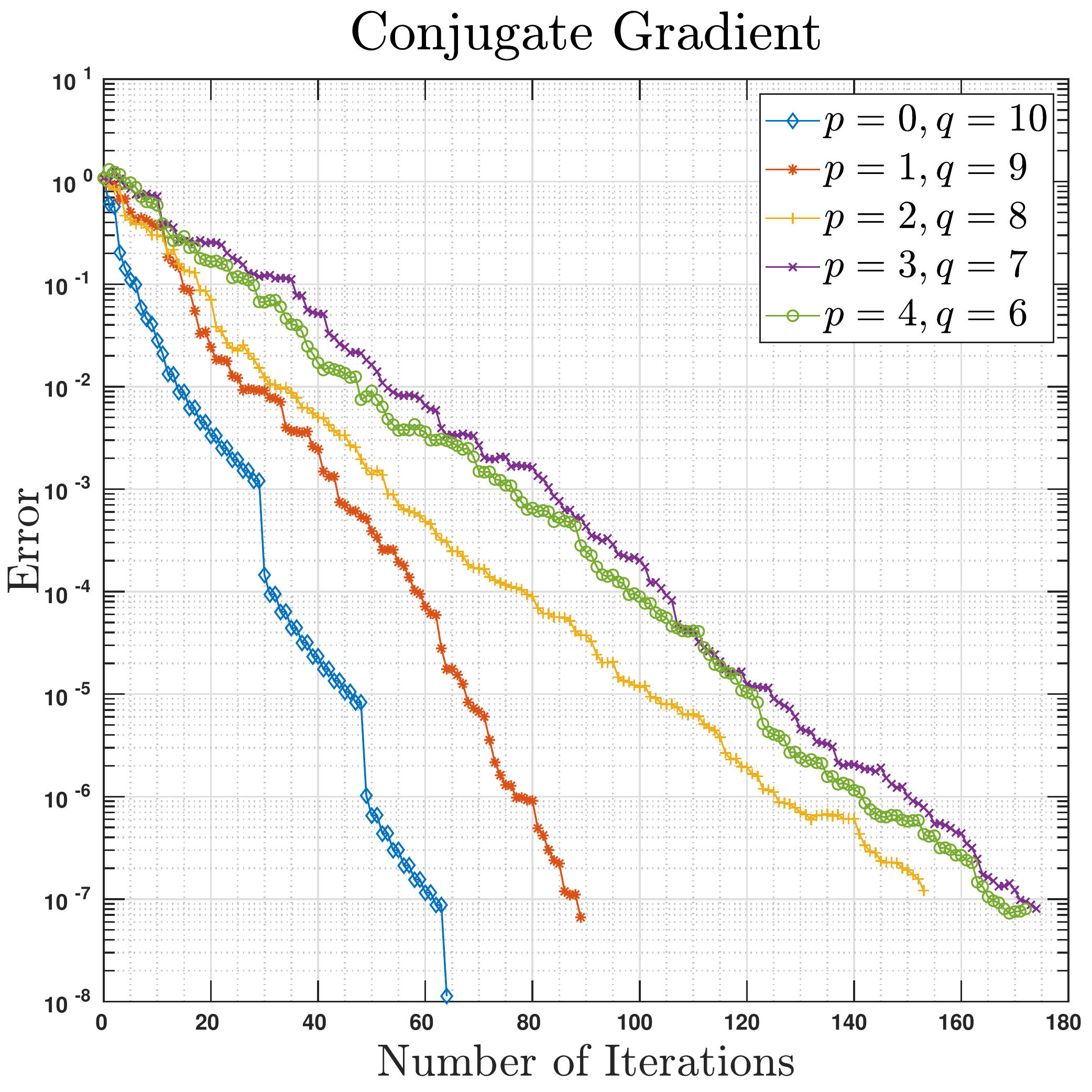}
  \caption{Semi-log convergence plots for Riemannian and semi-Riemannian steepest descent \cref{alg:gd-semi-mfld} and conjugate gradient \cref{alg:cg-semi-mfld} applied to a random instance of optimization problem \cref{eq:euc-sphere-minkowski} on the Euclidean unit sphere in Minkowski Space $\mathbb{R}^{p,q}$ with $p+q=10$. In this example, $A$ is a $10$-by-$10$ symmetric matrix (not necessarily positive definite), and the optimum is attained at the eigenvector associated with the largest eigenvalue of $A$; the vertical axes stand for the squared Euclidean distance between the iterate $x_k$ and the true optimum (obtained using Riemannian trust region method). In both subplots, each curve represents a different Minkowski space $\mathbb{R}^{p,q}$, i.e., the same base space $\mathbb{R}^{p+q}=\mathbb{R}^{10}$ but endowed with a different semi-Riemannian structure; the two blue curves corresponding to the case $p=0$ and $q=10$ are Riemannian steepest descent and conjugate gradient algorithms, which appear to require the least number of iterations for both steepest descent and conjugate gradient algorithms. These figures indicate that the convergence of semi-Riemannian optimization algorithms is guaranteed regardless of the specific semi-Riemannian structure imposed on the manifold, though the convergence rates vary.}
  \label{fig:sphere-sd-cg}
\end{figure}

\subsection{Pseudo-spheres in Minkowski Spaces}
\label{sec:pseudo-spher-mink}

Since the pseudo-spheres (\Cref{sec:pseudo-spheres}) and pseudo-hyperbolic spaces (\Cref{sec:pseudo-hyperb-spac}) differ from each other only by an anti-isometry \cite[Lemma 24]{ONeill1983}, we will only consider pseudo-spheres in this numerical experiment. Note that, given an arbitrary point $x\in \mathbb{S}^{p,q}$ and a tangent direction $\T_x\mathbb{S}^{p,q}$, it is generally difficult to calculate Riemannian geodesics on pseudo-spheres explicitly (except for some particular cases where e.g. \emph{Clairaut's relation} holds, see \cite[Chapter 3 Ex. 1]{doCarmo1992RG}), but semi-Riemannian geodesics adopt closed-form expression \cref{eq:semi-riem-geod-pseudo-sphere} and thus can be used as retractions for semi-Riemannian optimization algorithms. We consider the problem
\begin{equation}
  \label{eq:pseudo-sphere-minkowski}
  \min_{x\in\mathbb{S}^{p,q}}\left\| x-\xi \right\|_2^2
\end{equation}
where $\left\| x-\xi \right\|_2$ is the \emph{Euclidean} distance between $x\in\mathbb{S}^{p,q}$ and an arbitrarily chosen point $\xi\in \mathbb{R}^{p+q}$ that does not lie on $\mathbb{S}^{p,q}$. An illustration of the convergence of semi-Riemannian steepest descent and conjugate gradient methods (in semi-log scale) for a random instance of \cref{eq:pseudo-sphere-minkowski} with $p=3$ and $q=12$ can be found in \cref{fig:pseudosphere-sd-cg}, where the vertical axis marks the squared Euclidean distance between $x_k$ and the ground truth solution $x_{\textrm{true}}$ computed using the constrained optimization routine \verb|fmincon| provided in the \textsc{Matlab} optimization toolbox. This numerical experiment indicates that the convergence rates of both semi-Riemannian first-order methods are linear for \cref{eq:pseudo-sphere-minkowski}.

\begin{figure}[htbp]
  \centering
  \includegraphics[width=1.00\textwidth]{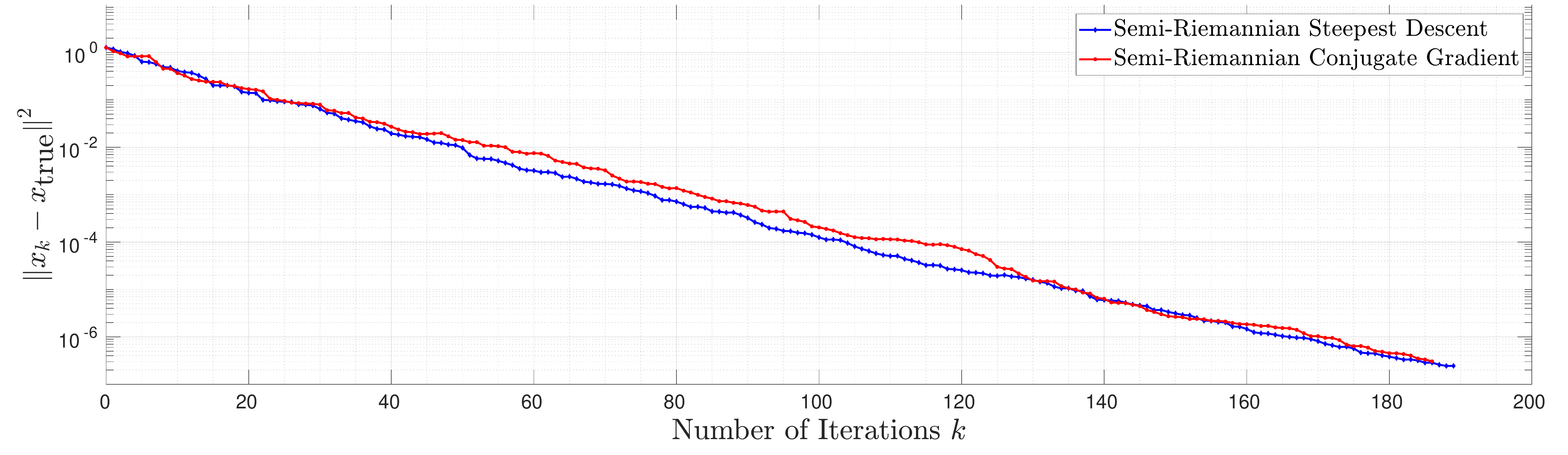}
  \caption{Semi-log convergence plot for semi-Riemannian steepest descent \cref{alg:gd-semi-mfld} and conjugate gradient \cref{alg:cg-semi-mfld} applied to a random instance of optimization problem \cref{eq:pseudo-sphere-minkowski} on the unit pseudo-sphere $\mathbb{S}^{p,q}$ in Minkowski Space $\mathbb{R}^{p,q}$ with $p=3$ and $q=12$. The convergence is measured with respect to the ground truth solution $x_{\mathrm{true}}$ computed directly using the constrained optimization functionality provided in the \textsc{Matlab} optimization toolbox. Linear convergence is demonstrated for both semi-Riemannian optimization algorithmsz.}
  \label{fig:pseudosphere-sd-cg}
\end{figure}

\section{Conclusion}
\label{sec:discussion}

Motivated by the metric independence of Riemannian optimization algorithms and the Riemannian geometry of self-concordant barrier functions, we developed an algorithmic framework for optimization on semi-Riemannian manifolds in this paper, which includes Riemannian manifold optimization and standard unconstrained optimization in Euclidean spaces as special cases. We proposed a modification to the semi-Riemannian gradients for obtaining descent directions, and used this methodology to devise steepest descent and conjugate gradient algorithms for semi-Riemannian manifold optimization. We also showed that second-order methods such as Newton's method and trust region methods are invariant with respect to difference choices of semi-Riemannian (including Riemannian) metrics. We provided numerical experiments to demonstrate the feasibility of the proposed algorithmic framework on non-degenerate semi-Riemannian submanifolds of Minkowski spaces. We defer more rigorous theoretical analysis, as well as broader ranges of applications of, semi-Riemannian manifold optimization to future work.

\section*{Software}
\verb|MATLAB| code for the surface registration algorithm is publicly available at \url{https://github.com/trgao10/SemiRiem}.

\section*{Acknowledgments}
The authors would like to thank Lin Lin (UC Berkeley) for inspirational discussions.

\bibliographystyle{siamplain}
\bibliography{refs}


\appendix

\section{Genericity Non-degeneracy of Semi-Riemannian Structures on Hyperplanes of Minkowski Spaces}
\label{sec:gener-hyperpl-mink}

We begin with a brief discussion for the Gauss map defined in \Cref{sec:semi-riemannian-sub}. Consider $Z = \{(x,W)\in X\times \Gr(m,p+q):W = \N_x(X,\mathbb{R}^{p+q})\}$. Since $W = \N_x(X,\mathbb{R}^{p+q})$ if and only if $W$ is perpendicular to $\T_x X$ with respect to the Euclidean metric on $\mathbb{R}^{p+q}$, $Z$ is a closed subset of $X\times \Gr(m,p+q)$. More precisely, we have
\begin{proposition}
Let $\pi:X\times \Gr(m,p+q) \to \Gr(m,p+q)$ be the canonical projection onto the second factor. The following facts hold:
\begin{enumerate}[(1)]
  \item $\pi(Z) = \N(X)$;
  \item If $X$ is compact, then $\pi$ is a closed map, (i.e. mapping closed sets to closed sets). In particular, $\N(X)\subseteq \Gr(m,p+q)$ is a closed subset if $X$ is compact.
\end{enumerate}
\end{proposition}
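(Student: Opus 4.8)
The plan is to treat the two parts separately: part (1) is a matter of unwinding the definition of $Z$, while part (2) reduces to the standard closed-projection characterization of compactness.

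For part (1), I would note that $Z$ is by construction the graph of the Gauss map, namely $Z = \{(x,\N_x(X,\mathbb{R}^{p+q})) : x\in X\}$. Its image under the projection $\pi$ onto the second factor is therefore $\{\N_x(X,\mathbb{R}^{p+q}) : x\in X\}$, which is exactly $\N(X)$ by the definition of the image of the Gauss map. No further argument is required.

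For part (2), the essential input is the well-known fact (a consequence of the tube lemma) that whenever $X$ is compact, the projection $\pi: X\times Y \to Y$ is a closed map for \emph{any} topological space $Y$; I would apply this with $Y = \Gr(m,p+q)$. I would prove it directly: given a closed set $C\subseteq X\times \Gr(m,p+q)$ and a point $W_0\in \Gr(m,p+q)\setminus \pi(C)$, the fibre $X\times\{W_0\}$ is disjoint from $C$, so for each $x\in X$ one may choose a basic open neighbourhood $U_x\times V_x$ of $(x,W_0)$ contained in the open complement of $C$. Compactness of $X$ yields a finite subcover $U_{x_1},\dots,U_{x_n}$, and then $V := \bigcap_{i=1}^n V_{x_i}$ is an open neighbourhood of $W_0$ with $X\times V$ disjoint from $C$; hence $V\cap \pi(C) = \emptyset$, proving that $\Gr(m,p+q)\setminus \pi(C)$ is open and thus that $\pi(C)$ is closed. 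For the final assertion I would combine this with the fact, already recorded preceding the statement, that $Z$ is closed in $X\times \Gr(m,p+q)$: since $\pi$ is closed, $\pi(Z)$ is closed, and by part (1) $\pi(Z) = \N(X)$, so $\N(X)$ is closed when $X$ is compact.

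I do not expect a genuine obstacle, since part (1) is definitional and part (2) is the textbook tube-lemma argument. The only point demanding care is the finite-subcover step in part (2): one must \emph{intersect} the neighbourhoods $V_{x_i}$ (rather than union them) so that $X\times V$ avoids $C$ entirely. This is routine but worth stating explicitly to keep the argument correct.
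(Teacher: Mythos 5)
Your proposal is correct: part (1) is indeed definitional from $Z$ being the graph of the Gauss map, and your tube-lemma argument for the closedness of $\pi$ (correctly intersecting the $V_{x_i}$ over a finite subcover of the $U_{x_i}$) is the standard and right one. The paper in fact states this proposition without proof, relying only on the immediately preceding observation that $Z$ is closed in $X\times \Gr(m,p+q)$, so your write-up supplies exactly the routine argument the paper leaves implicit.
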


The non-degeneracy of a generic hyperplane can be easily obtained as a corollary of \cref{prop:criterion-non-degeneracy}. Recall that hyperplanes $H\subset \mathbb{R}^{p+q}$ can be characterized as
\begin{equation*}
  H:=\left\{ x=\left( x_1,\cdots,x_p,y_1,\cdots,y_q \right)\in\mathbb{R}^{p+q}\,\Bigg|\,\sum_{j=1}^p a_j x_j + \sum_{j=1}^q b_j y_j = 0 \right\},
\end{equation*}
and we denote
\begin{equation*}
  \mathbf{n}\coloneqq (a_1,\dots,a_p,b_1,\dots,b_q)\in \mathbb{R}^{p+q}
\end{equation*}
for the \emph{normal vector} of $H$. Obviously, vector $\I_{p,q}\mathbf{n}$ lies in $H$ if and only if $\mathbf{n}$ satisfies the equation $\langle \mathbf{n}, \mathbf{n} \rangle = 0$, or equivalently, the equation $\sum_{j=1}^p a_j^2 = \sum_{j=1}^q b_j^2$.

\begin{corollary}
A generic hyperplane in $\mathbb{R}^{p,q}$ is non-degenerate. 
\end{corollary}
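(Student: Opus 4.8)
The plan is to combine the explicit description of the Gauss map of a hyperplane with the non-degeneracy criterion of \cref{prop:criterion-non-degeneracy}, and then to observe that the degenerate hyperplanes are cut out by a single nonzero polynomial in the parameter space of all hyperplanes.

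First I would observe that for a hyperplane $H$ with normal vector $\mathbf{n} = (a_1,\dots,a_p,b_1,\dots,b_q)$, the Euclidean normal space $\N_x(H,\mathbb{R}^{p+q}) = \operatorname{span}(\mathbf{n})$ is independent of $x\in H$. Consequently the Gauss map $\N: H \to \Gr(1,p+q) = \mathbb{P}\mathbb{R}^{p+q}$ is constant, with image the single point $[\mathbf{n}]$. By \cref{prop:criterion-non-degeneracy}, $H$ is non-degenerate if and only if $\mathcal{V}\cap \N(H) = \emptyset$, which in the present case reduces to $[\mathbf{n}]\notin\mathcal{V}$, i.e. $\sum_{j=1}^p a_j^2 \neq \sum_{j=1}^q b_j^2$ --- precisely the negation of the condition $\langle \mathbf{n},\mathbf{n}\rangle = 0$ already identified above for $\I_{p,q}\mathbf{n}$ to lie in $H$.

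Next I would identify the space of (linear) hyperplanes in $\mathbb{R}^{p+q}$ with the projective space $\mathbb{P}\mathbb{R}^{p+q}$ of normal directions $[\mathbf{n}]$. Under this identification the degenerate hyperplanes correspond exactly to the points of the quadric $\mathcal{V}$, which is the zero locus of the single nonzero homogeneous quadratic $\sum_{j=1}^p x_j^2 - \sum_{j=1}^q y_j^2$. Hence $\mathcal{V}$ is a proper algebraic subvariety of $\mathbb{P}\mathbb{R}^{p+q}$, and therefore has measure zero and empty interior. Its complement, consisting of the non-degenerate hyperplanes, is thus open and dense of full measure, which is exactly the assertion that a generic hyperplane is non-degenerate.

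The computation is elementary once the machinery is in place, so no genuine obstacle arises; the argument is a direct corollary of \cref{prop:criterion-non-degeneracy}. The only points requiring care are the precise meaning of \emph{generic} and the verification that $\mathcal{V}$ is neither empty nor all of $\mathbb{P}\mathbb{R}^{p+q}$. The latter is immediate when $p,q\geq 1$, since for instance the direction with a single $1$ in an $x$-slot and a single $1$ in a $y$-slot lies on $\mathcal{V}$, whereas a direction supported entirely on the $x$-coordinates does not.
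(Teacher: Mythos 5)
Your proof is correct and follows essentially the same route as the paper's: both observe that the Gauss map of a hyperplane is constant with image the normal direction $[\mathbf{n}]$, and then invoke \cref{prop:criterion-non-degeneracy} together with the fact that $\mathcal{V}$ is a proper hypersurface in $\mathbb{P}\mathbb{R}^{p+q}$ to conclude that a generic normal direction avoids $\mathcal{V}$. Your version merely makes explicit what the paper leaves implicit --- the identification of the parameter space of hyperplanes with $\mathbb{P}\mathbb{R}^{p+q}$, the precise sense of \emph{generic} (complement of a measure-zero closed subvariety), and the check that $\mathcal{V}$ is neither empty nor everything --- which is a welcome tightening but not a different argument.
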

\begin{proof}
If $H$ is a hyperplane, then the image of its Gauss map is a single point $\mathbf{n}$, which is the line determined by the normal vector of $H$. Since $\mathcal{V}$ is a hypersurface in $\mathbb{P}\mathbb{R}^{p+q}$, we see that the normal vector of a generic hyperplane does not lie on $\mathcal{V}$ and hence a generic hyperplane is non-degenerate.
\end{proof}

\section{Additional Examples}
\label{sec:additional-examples}

The following \cref{table:summary} summarizes the examples computed in this paper.

\begin{table}[htpb]
\centering
\caption{Explicit Examples Calculated}
 \label{table:summary}
\begin{tabular}{|l|c|c|c|}
\hline
{\sc Manifolds}  & {Non-degenerate} & {\sc Geodesics} & {\sc Paral. Transp.} \\
\hline
(generic) hyperplane & yes & \checkmark & \checkmark \\
\hline
sphere & no & \ding{55} & \ding{55} \\
\hline
pseudo-sphere & yes & \checkmark  & \checkmark \\
\hline
pseudo-hyperbolic space & yes & \checkmark  & \checkmark \\
\hline
indefinite orthogonal group & no & \checkmark & \ding{55} \\
\hline
orthogonal group & no & \checkmark & \ding{55}  \\
\hline
special linear group & yes ($p\ne q$) & \checkmark & \ding{55} \\
\hline
symplectic group & no & \checkmark & \ding{55} \\
\hline 
SPD matrices  & no & \checkmark & \ding{55}\\
\hline
\end{tabular}
\end{table}

\subsection{Semi-Riemannian Geometry of Symmetric Positive Definite Matrices}
\label{sec:semi-riem-geom-additional-example}

Let $\mathbb{S}^n_{++}$ be the manifold consisting of all $n\times n$ symmetric positive definite matrices. A matrix $A\in \mathbb{R}^{n\times n}$ is symmetric positive definite if and only if there exists some $M\in \GL(n,\mathbb{R})$ such that $A = M^\mathsf{T}M$. The tangent space of $\mathbb{S}^n_{++}$ at $A$ is $\operatorname{S}^2 \mathbb{R}^n$. Hence if we regard $\mathbb{S}^n_{++}$ as a semi-Riemannian sub-manifold of $\GL(n,\mathbb{R})$ with respect to the semi-Riemannian metric $\langle \cdot, \cdot \rangle$ with signature $(p,q) = (p,n-p)$, then the semi-normal space of $\mathbb{S}^n_{++}$ at $A$ is 
\[
\SN_A (\mathbb{S}^n_{++}, \GL(n,\mathbb{R})) = \left\lbrace
\I_{p,q}\Delta:\Delta\in \bigwedge^2 \mathbb{R}^2
 \right\rbrace.
\]
It is straightforward to compute the intersection of $\T_A \mathbb{S}^n_{++}$ and $\SN_A (\mathbb{S}^n_{++}, \GL(n,\mathbb{R}))$. Hence we obtain the following:
\begin{proposition}
The degenerate bundle of $\mathbb{S}^n_{++}$ in $\GL(n,\mathbb{R})$ is 
\[
(\mathbb{S}^n_{++})_A^{\perp} = \left\lbrace 
\begin{bmatrix}
0 & X^\mathsf{T} \\
X & 0
\end{bmatrix}: X\in \mathbb{R}^{q\times p}
\right\rbrace.
\]
In particular, $\mathbb{S}^n_{++}$ is a degenerate semi-Riemannian sub-manifold of $\GL(n,\mathbb{R})$.
\end{proposition}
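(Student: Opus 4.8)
The plan is to invoke \cref{lemma:SN v.s. X perp}, which identifies each fibre of the degenerate bundle with the intersection of the semi-normal space and the tangent space, namely $(\mathbb{S}^n_{++})_A^\perp = \SN_A(\mathbb{S}^n_{++},\GL(n,\mathbb{R})) \cap \T_A \mathbb{S}^n_{++}$. The two ingredients are already in hand: $\T_A \mathbb{S}^n_{++} = \operatorname{S}^2\mathbb{R}^n$ is the space of symmetric matrices, and the semi-normal space is $\{\I_{p,q}\Delta : \Delta \text{ skew-symmetric}\}$. The entire problem therefore reduces to the purely linear-algebraic question of deciding which matrices of the form $\I_{p,q}\Delta$, with $\Delta$ skew, happen to be symmetric.

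First I would impose the symmetry requirement $(\I_{p,q}\Delta)^\top = \I_{p,q}\Delta$. Using that $\I_{p,q}$ is (diagonal, hence) symmetric and that $\Delta^\top = -\Delta$, the left-hand side is $\Delta^\top \I_{p,q} = -\Delta\I_{p,q}$, so the condition collapses to the anticommutation relation $\I_{p,q}\Delta + \Delta\I_{p,q} = 0$. This is the crux of the argument, and it is formally the same symmetry constraint met in \cref{eq:symmetry-condition} of \cref{exm:indef-orth-group}, so the subsequent bookkeeping will mirror that example.

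Next I would solve the relation by partitioning $\Delta$ according to $n = p+q$ into diagonal blocks $\Delta_1\in\mathrm{Skew}\left(\mathbb{R}^{p\times p}\right)$, $\Delta_3\in\mathrm{Skew}\left(\mathbb{R}^{q\times q}\right)$ and an off-diagonal block $\Delta_2\in\mathbb{R}^{p\times q}$. Since $\I_{p,q}=\operatorname{diag}(-I_p,I_q)$, a direct block multiplication shows the anticommutator is block-diagonal with blocks $-2\Delta_1$ and $2\Delta_3$; forcing it to vanish gives $\Delta_1=\Delta_3=0$ and leaves $\Delta_2$ unconstrained. Substituting back, $\I_{p,q}\Delta$ has zero diagonal blocks, and after renaming the lower-left block $X = -\Delta_2^\top\in\mathbb{R}^{q\times p}$ one obtains exactly the claimed form $\begin{bmatrix} 0 & X^\top \\ X & 0\end{bmatrix}$.

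Finally, this matrix is nonzero for every $X\neq 0$, so the fibre $(\mathbb{S}^n_{++})_A^\perp$ is nontrivial and the induced metric is degenerate, which settles the ``in particular'' clause. I do not expect a real obstacle: once the semi-normal space is taken as given, this is a short block computation. The only place warranting care is the tracking of transposes and signs in the substitution $X=-\Delta_2^\top$, so that the final description coincides with the stated parametrization of the bundle rather than its transpose.
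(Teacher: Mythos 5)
Your proposal is correct and takes essentially the same route as the paper: the paper also obtains the fibre as the intersection $\SN_A(\mathbb{S}^n_{++},\GL(n,\mathbb{R}))\cap\T_A\mathbb{S}^n_{++}$ via \cref{lemma:SN v.s. X perp} and simply declares the computation ``straightforward,'' while your block calculation---symmetry of $\I_{p,q}\Delta$ forcing the anticommutation $\I_{p,q}\Delta+\Delta\I_{p,q}=0$, hence $\Delta_1=\Delta_3=0$ with $\Delta_2$ free and $X=-\Delta_2^\mathsf{T}$---is exactly the omitted computation, with signs and transposes handled correctly. The only caveat, shared with the paper, is that the ``in particular'' degeneracy claim implicitly requires $1\le p\le n-1$ so that $\mathbb{R}^{q\times p}\neq\{0\}$.
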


\begin{proposition}
If a geodesic passing through $A\in  \mathbb{S}^n_{++}$ with the tangent direction $\I_{p,q} \Delta$ exists, then $\Delta = \begin{bmatrix}
0 & -\Delta_2^\mathsf{T} \\
\Delta_2 & 0
\end{bmatrix}$ and $\gamma(t)$ can be written as 
\[
\gamma(t) = \int_0^t  \begin{bmatrix}
0 & U(\tau)^\mathsf{T} \\
U(\tau) & 0
\end{bmatrix} + A,
\]
for some suitable $q\times p$ matrix-valued function $U(t)$ such that $U(0) = \Delta_2$ and \[
t \begin{bmatrix}
0 & U(\tau)^\mathsf{T} \\
U(\tau) & 0
\end{bmatrix} d\tau + A \succ 0
\]
for each $t$.
\end{proposition}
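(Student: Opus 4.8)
The plan is to exploit the flatness of the ambient connection to convert the geodesic condition of \cref{defn:geodesic} into an explicit block-matrix identity, and then to read off both the form of $\Delta$ and of $\gamma$ by integration.

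First I would note that $\GL(n,\mathbb{R})$ is an open subset of $\mathbb{R}^{n\times n}$ carrying the \emph{constant} metric tensor of \cref{exm:matrix-semi-riem}, whose coefficients $I_n\otimes\I_{p,q}$ are independent of the base point. Exactly as in \cref{exm:semi-riem-euc}, the Christoffel symbols vanish and the ambient covariant derivative along a curve reduces to the ordinary second derivative. Hence, writing $\frac{D}{dt}\dot\gamma(t)=\ddot\gamma(t)$, the defining condition $\frac{D}{dt}\dot\gamma(t)\in\SN_{\gamma(t)}(\mathbb{S}^n_{++},\GL(n,\mathbb{R}))$ becomes $\ddot\gamma(t)\in\{\I_{p,q}\Lambda:\Lambda\in\bigwedge^2\mathbb{R}^n\}$, using the description of the semi-normal space recalled just before this proposition.

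Next I would pin down the block structure. Since $\gamma$ takes values in symmetric matrices, $\ddot\gamma(t)$ is symmetric; combining this with $\ddot\gamma(t)=\I_{p,q}\Lambda(t)$ for some skew $\Lambda(t)$ forces $\Lambda(t)$ to anticommute with $\I_{p,q}$. Partitioning into the natural $p$- and $q$-blocks and solving $\I_{p,q}\Lambda+\Lambda\I_{p,q}=0$ annihilates the two diagonal blocks, so $\ddot\gamma(t)=\bigl[\begin{smallmatrix}0 & V(t)^\top\\ V(t)&0\end{smallmatrix}\bigr]$ for some $V(t)\in\mathbb{R}^{q\times p}$. The very same computation applied at $t=0$ gives the first assertion: the notation $\I_{p,q}\Delta$ indicates that $\dot\gamma(0)$ lies in $\SN_A$ (so $\Delta$ is skew), while tangency to $\mathbb{S}^n_{++}$ places it in the degenerate bundle $(\mathbb{S}^n_{++})_A^{\perp}=\SN_A\cap\T_A\mathbb{S}^n_{++}$; the anticommutation condition then yields $\Delta=\bigl[\begin{smallmatrix}0 & -\Delta_2^\top\\ \Delta_2 & 0\end{smallmatrix}\bigr]$.

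Finally I would integrate twice. Because the diagonal blocks of $\ddot\gamma$ vanish identically, the diagonal blocks of $\dot\gamma$ are constant in $t$, and they are already zero at $t=0$ since $\dot\gamma(0)=\I_{p,q}\Delta=\bigl[\begin{smallmatrix}0 & \Delta_2^\top\\ \Delta_2 & 0\end{smallmatrix}\bigr]$; hence $\dot\gamma(t)=\bigl[\begin{smallmatrix}0 & U(t)^\top\\ U(t)&0\end{smallmatrix}\bigr]$ with $U(0)=\Delta_2$. Integrating once more from $\gamma(0)=A$ produces the stated formula, and the constraint that $\gamma$ remain inside $\mathbb{S}^n_{++}$ is precisely the positive-definiteness requirement. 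The point to emphasize, rather than a computational obstacle, is that $U$ (equivalently the acceleration $V=\dot U$) is left entirely free apart from positivity, so the geodesic is far from unique. This is the expected hallmark of degeneracy: the degenerate bundle being nonzero means no Koszul derivative exists and the geodesic initial-value problem is underdetermined, which is why the statement is a characterization of the admissible form and why the clause ``if $\dots$ exists'' is needed to cover directions along which positivity cannot be sustained for all $t$.
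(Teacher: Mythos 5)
Your argument is correct and is essentially the paper's own proof: the paper likewise works with the flat ambient metric (so the covariant acceleration is $\ddot{\gamma}(t)$), lists the conditions $\dot{\gamma}(t)\in \operatorname{S}^2\mathbb{R}^n$ and $\I_{p,q}\ddot{\gamma}(t)\in\bigwedge^2\mathbb{R}^n$, and integrates twice --- and you usefully make explicit the block computation (symmetry of $\ddot{\gamma}$ plus skewness of $\I_{p,q}\ddot{\gamma}$ annihilating the diagonal blocks, with the initial velocity in the degenerate bundle) that the paper compresses into the unexplained ``Hence $\dot{\gamma}(t)$ has the stated form.'' One small caveat on your closing aside, which is extraneous to the proof: a nonzero degenerate bundle does not by itself imply that no Koszul derivative exists (Kupeli's singular semi-Riemannian manifolds are precisely degenerate manifolds admitting one); the paper's actual obstruction criterion is non-integrability of $M^{\perp}$, which it verifies for $\O(p,q)$ and $\O(n)$ but never claims for $\mathbb{S}^n_{++}$.
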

\begin{proof}
Let $\gamma(t)$ be a geodesic curve passing through $A$ with the tangent direction $\I_{p,q}\Delta$. Then $\gamma(t)$ must satisfies the following conditions:
\[
\gamma(0) = A, \quad \dot{\gamma}(0) =\I_{p,q} \Delta,\quad \dot{\gamma}(t)\in \operatorname{S}^2 \mathbb{R}^n, \quad \I_{p,q}\ddot{\gamma}(t) \in \bigwedge^2 \mathbb{R}^2.
\]
Hence $\dot{\gamma}(t) = \begin{bmatrix}
0 & U(t)^\mathsf{T} \\
U(t) & 0
\end{bmatrix}$ for some $q\times p$ matrix-valued function $U(t)$. Therefore, we must have 
\[
\gamma(t) = \int_0^t  \begin{bmatrix}
0 & U(t)^\mathsf{T} \\
U(t) & 0
\end{bmatrix} + A\in   \mathbb{S}^n_{++}.
\]
Since $A = \frac{A}{t}\int_0^t d\tau$, we see that $\gamma(t) \succ 0$ if and only if 
\[
t \begin{bmatrix}
0 & U(t)^\mathsf{T} \\
U(t) & 0
\end{bmatrix} + A \succ 0.
\]
\end{proof}


\subsection{Semi-Riemannian Geometry of Matrix Lie Groups}
\label{sec:semi-riem-geom-matrix-lie-groups}

As discussed in \Cref{sec:degen-subm}, matrix Lie groups provide another class of rich examples for manifolds admitting semi-Riemannian structures. In this section we illustrate how to apply the semi-Riemannian manifold optimization framework developed in \Cref{sec:semi-riem-optim} to some common matrix Lie groups, despite the degeneracy of their inherited sub-Riemannian structures, by means of projection to semi-normal bundles. We begin with a general discussion on the semi-normal bundle of matrix Lie groups, then specialize to several examples. 

Let $G\subseteq \GL(n,\mathbb{R})$ be a matrix Lie group. We denote by $\mathfrak{g}$ the Lie algebra of $G$. Then the tangent space of $G$ at a point $A\in G$ is simply $A\mathfrak{g}$. For each fixed positive integer $0 \le p \le n$, there is a left-invariant semi-Riemannian metric of signature $(p,q) = (p,n-p)$ on $\GL(n,\mathbb{R})$ defined by
\[
\langle U,V \rangle_A = \tr((A^{-1}U)^\mathsf{T}\I_{p,q}(A^{-1}V)),\quad U,V\in \T_A G\quad, A\in G.
\]
Since the bilinear form $\langle\cdot, \cdot \rangle$ is left-invariant, we have the following 
\begin{lemma}\label{lemma:semi-normal left invariant}
For each $A\in G$, we have 
\[
\SN_A (G,\GL(n,\mathbb{R})) = \{AX: X\in \SN_{\I_n} (G,\GL(n,\mathbb{R})\}.
\]
\end{lemma}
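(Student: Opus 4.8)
The plan is to unwind the definition of the semi-normal space and exploit the left-invariance of the metric directly. By definition, an element $u \in \T_A\GL(n,\mathbb{R}) = \mathbb{R}^{n\times n}$ lies in $\SN_A(G,\GL(n,\mathbb{R}))$ precisely when $\langle u, v\rangle_A = 0$ for every $v \in \T_A G = A\mathfrak{g}$. First I would parametrize the tangent space by writing each such $v$ as $v = AY$ with $Y \in \mathfrak{g}$, and likewise set $X = A^{-1}u$, so that $u = AX$. The goal is then to show that the defining condition on $u$ at $A$ is equivalent to the analogous condition on $X$ at $\I_n$.

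Next I would substitute these expressions into the explicit formula for the left-invariant metric. Since $\langle u, v\rangle_A = \tr((A^{-1}u)^\mathsf{T}\I_{p,q}(A^{-1}v))$, the inner factors $A^{-1}A$ cancel and the pairing collapses to $\tr(X^\mathsf{T}\I_{p,q}Y)$, which is exactly $\langle X, Y\rangle_{\I_n}$ and carries no remaining dependence on $A$. As $v$ ranges over $A\mathfrak{g}$, the vector $Y = A^{-1}v$ ranges over all of $\mathfrak{g} = \T_{\I_n}G$. Hence $\langle u, v\rangle_A = 0$ for all $v \in \T_A G$ if and only if $\langle X, Y\rangle_{\I_n} = 0$ for all $Y \in \T_{\I_n}G$, i.e. if and only if $X = A^{-1}u \in \SN_{\I_n}(G,\GL(n,\mathbb{R}))$. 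This yields the claimed identity $\SN_A(G,\GL(n,\mathbb{R})) = \{AX : X \in \SN_{\I_n}(G,\GL(n,\mathbb{R}))\}$.

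Conceptually, the statement simply asserts that left translation $L_A:B\mapsto AB$ is an isometry of the left-invariant metric carrying $G$ to itself, so that its differential $X\mapsto AX$ sends the semi-normal space at $\I_n$ isometrically to the one at $A$; the computation above is the concrete verification of this. I do not expect any serious obstacle. The only point requiring mild care is to confirm that the substitution $v = AY$ furnishes a genuine bijection between $\T_A G$ and $\mathfrak{g}$ — which is immediate from $\T_A G = A\mathfrak{g}$ and the invertibility of $A\in\GL(n,\mathbb{R})$ — so that the quantifier ``for all $v\in\T_A G$'' transfers faithfully to ``for all $Y\in\mathfrak{g}$.''
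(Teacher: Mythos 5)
Your proposal is correct and follows exactly the route the paper intends: the paper states the lemma as an immediate consequence of the left-invariance of $\langle\cdot,\cdot\rangle$, and your computation (substituting $u=AX$, $v=AY$ into $\tr((A^{-1}u)^{\mathsf{T}}\I_{p,q}(A^{-1}v))$ so the pairing collapses to $\langle X,Y\rangle_{\I_n}$, with $Y$ ranging bijectively over $\mathfrak{g}$) is precisely the verification the paper leaves implicit. Nothing is missing.
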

Let $(\cdot,\cdot)$ be the Riemannian metric on $\GL(n,\mathbb{R})$ defined by 
\[
(U,V)_A = \tr((A^{-1}U)^\mathsf{T} A^{-1}V),\quad A\in \GL(n,\mathbb{R}),\quad U,V\in \T_A\GL(n,\mathbb{R}).
\]
We denote by $\N(G,\GL(n,\mathbb{R}))$ the normal bundle of $G$ in $\GL(n,\mathbb{R})$. We can relate the normal bundle and the semi-normal of bundle of $G$ in $\GL(n,\mathbb{R})$ by the following:
\begin{proposition}\label{prop:normal v.s. semi-normal}
For each $A\in G$, we have 
\[
\SN_A(G,\GL(n,\mathbb{R})) = \I_{p,q} \N_A(G,\GL(n,\mathbb{R})).
\]
In particular, $\SN(G,\GL(n,\mathbb{R}))$ is a vector bundle on $G$ of rank $n^2 - \dim G$. 
\end{proposition}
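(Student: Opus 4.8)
The plan is to reduce the fibrewise identity to the single fibre over $\I_n$, using that both metrics are left-invariant, and there to run the same linear-algebra argument that proves the flat statement \cref{prop:normal v.s. semi-normal 1}. First I would record that the Riemannian metric $(\cdot,\cdot)$ is left-invariant --- exactly as $\langle\cdot,\cdot\rangle$ is --- so that the $(\cdot,\cdot)$-orthogonal complement $\N_A(G,\GL(n,\mathbb{R}))$ of $\T_A G = A\mathfrak{g}$ satisfies the analogue of \cref{lemma:semi-normal left invariant}, namely $\N_A = A\,\N_{\I_n}$, and likewise $\SN_A = A\,\SN_{\I_n}$. Consequently it suffices to establish $\SN_{\I_n} = \I_{p,q}\N_{\I_n}$ at the identity, where $\T_{\I_n}G = \mathfrak{g}$ and the two forms are $\langle U,V\rangle_{\I_n} = \tr(U^{\mathsf T}\I_{p,q}V)$ and $(U,V)_{\I_n} = \tr(U^{\mathsf T}V)$.

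The key step is the following direct verification. Writing an arbitrary element of $\I_{p,q}\N_{\I_n}$ as $U = \I_{p,q}W$ with $W\in\N_{\I_n}$, the symmetry $\I_{p,q}^{\mathsf T} = \I_{p,q}$ and the involution property $\I_{p,q}^2 = I_n$ give, for every $V\in\mathfrak{g}$,
\[
\langle \I_{p,q}W,\,V\rangle_{\I_n} = \tr\bigl((\I_{p,q}W)^{\mathsf T}\I_{p,q}V\bigr) = \tr(W^{\mathsf T}V) = (W,V)_{\I_n} = 0,
\]
so $\I_{p,q}W\in\SN_{\I_n}$; running the computation backwards (setting $W=\I_{p,q}U$ and again using $\I_{p,q}^2 = I_n$) shows every element of $\SN_{\I_n}$ is of this form. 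Hence $\SN_{\I_n} = \I_{p,q}\N_{\I_n}$, and left-translating yields the asserted fibrewise relation.

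For the final clause I would note that $\N(G,\GL(n,\mathbb{R}))$ is the ordinary normal bundle of the submanifold $G$ in the $n^2$-dimensional manifold $\GL(n,\mathbb{R})$ with respect to a genuine Riemannian metric, hence a smooth vector bundle of rank $n^2 - \dim G$; since $\I_{p,q}$ is invertible it preserves fibre dimension, so $\SN_{\I_n}$ has dimension $n^2 - \dim G$ as well, and left-invariance spreads this to a smooth subbundle of constant rank $n^2 - \dim G$.

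The main thing to watch is the meaning of $\I_{p,q}\N_A$ away from the identity: the clean statement lives in the left-trivialization $U\mapsto A^{-1}U$, in which both left-invariant metrics become the constant forms $\tr(X^{\mathsf T}\I_{p,q}Y)$ and $\tr(X^{\mathsf T}Y)$ on $\mathfrak{gl}(n,\mathbb{R})$ and $\I_{p,q}$ is the constant involution carrying $\N$ to $\SN$. Phrasing the argument through \cref{lemma:semi-normal left invariant} (reduction to $\I_n$) sidesteps any ambiguity, and the rest is the routine linear algebra above driven by $\I_{p,q}^{\mathsf T} = \I_{p,q}$ and $\I_{p,q}^2 = I_n$.
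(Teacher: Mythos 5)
Your proof is correct and follows essentially the same route as the paper's: both reduce to unwinding the definition of $\SN$ and exploiting $\I_{p,q}^{\mathsf T}=\I_{p,q}$ and $\I_{p,q}^2=\I_n$ inside the trace, the only cosmetic difference being that you first reduce to the fibre over $\I_n$ via left-invariance while the paper runs the identical trace manipulation directly at a general $A$ (with the tangent vector tacitly left-trivialized as $A^{-1}V\in\mathfrak{g}$). Your closing remark that the identity $\SN_A=\I_{p,q}\N_A$ is really a statement in the left-trivialization is well taken --- the paper's one-line proof silently makes the same identification --- and your dimension count for the rank clause supplies a detail the paper leaves implicit.
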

\begin{proof}
$U\in \mathbb{R}^{n\times n}$ lies in $\SN_A(G,\GL(n,\mathbb{R}))$ if and only if $\tr((A^{-1}U)^\mathsf{T} \I_{p,q} A^{-1}V) = 0$ for all $V\in \mathfrak{g}$. This implies that $U\in \SN_A(G,\GL(n,\mathbb{R}))$ if and only if $\I_{p,q}U\in \N_A(G,\GL(n,\mathbb{R}))$ and this completes the proof.
\end{proof}

Let $\gamma(t)$ be a geodesic passing through $A\in G$ with tangent direction $AU$. Then by definition, $\gamma(t)$ is characterized by the following relations:
\[
\gamma(t)\in G,\quad \dot{\gamma}(t)\in \gamma(t)\mathfrak{g},\quad \ddot{\gamma}(t)\in \SN_{\gamma(t)}(G,\GL(n,\mathbb{R}))
\]
and the initial condition $\gamma(0) = A$, $\dot{\gamma}(0) = U$. To compute geodesics explicitly for matrix Lie groups, we need the following simple but handy observations. 
\begin{lemma}\label{lemma:U(t)}
If $\gamma(t)$ is a given geodesic, then there exists a unique curve $U(t)$ in $\mathfrak{g}$ such that $\dot{\gamma}(t) = \gamma(t)U(t)$ and $U(t)^2 + \dot{U(t)} \in \SN_{\I_n}(G,\GL(n,\mathbb{R}))$.
\end{lemma}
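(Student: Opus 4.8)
The plan is to define $U(t)$ by left-translating the velocity back to the Lie algebra, and then to read off the stated condition on $U(t)^2 + \dot{U}(t)$ directly from the geodesic equation $\ddot{\gamma}(t) \in \SN_{\gamma(t)}(G,\GL(n,\mathbb{R}))$ together with the left-invariance recorded in \cref{lemma:semi-normal left invariant}.

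First I would set $U(t) := \gamma(t)^{-1}\dot{\gamma}(t)$. Since $\gamma(t)$ lies in $G\subseteq \GL(n,\mathbb{R})$ it is invertible for every $t$, and inversion is smooth on $\GL(n,\mathbb{R})$, so $U(t)$ is a well-defined smooth curve of matrices; it is moreover the \emph{unique} curve satisfying $\dot{\gamma}(t) = \gamma(t)U(t)$ precisely because $\gamma(t)$ is invertible. To see that $U(t)\in \mathfrak{g}$ for all $t$ (and not merely at $t=0$), I would use that $\gamma$ stays inside $G$, so its velocity satisfies $\dot{\gamma}(t)\in \T_{\gamma(t)}G = \gamma(t)\mathfrak{g}$; left-multiplying by $\gamma(t)^{-1}$ gives $U(t) = \gamma(t)^{-1}\dot{\gamma}(t)\in \mathfrak{g}$, as required. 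This already produces the unique curve $U(t)$ in $\mathfrak{g}$ promised by the statement, with $U(0) = U$.

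Next I would differentiate the relation $\dot{\gamma}(t) = \gamma(t)U(t)$ once more. Applying the product rule and substituting $\dot{\gamma}(t) = \gamma(t)U(t)$ again yields
\[
\ddot{\gamma}(t) = \dot{\gamma}(t)U(t) + \gamma(t)\dot{U}(t) = \gamma(t)\left(U(t)^2 + \dot{U}(t)\right),
\]
so the acceleration is exactly $\gamma(t)$ left-multiplied onto the matrix $U(t)^2 + \dot{U}(t)$.

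Finally I would invoke the geodesic characterization stated just above the lemma, namely $\ddot{\gamma}(t)\in \SN_{\gamma(t)}(G,\GL(n,\mathbb{R}))$. By the left-invariance of the semi-normal distribution (\cref{lemma:semi-normal left invariant}), this membership is equivalent to $\gamma(t)^{-1}\ddot{\gamma}(t)\in \SN_{\I_n}(G,\GL(n,\mathbb{R}))$. Combining this with the previous display gives $\gamma(t)^{-1}\ddot{\gamma}(t) = U(t)^2 + \dot{U}(t)$, whence $U(t)^2 + \dot{U}(t)\in \SN_{\I_n}(G,\GL(n,\mathbb{R}))$, which is the claim. The computation is otherwise routine; the only points demanding care are applying \cref{lemma:semi-normal left invariant} in the correct direction (pulling the fibre at $\gamma(t)$ back to the fibre at the identity by $\gamma(t)^{-1}$) and confirming that $U(t)$ remains in $\mathfrak{g}$ for all $t$, both of which follow from the left-invariance of the metric and the fact that $\gamma$ never leaves $G$.
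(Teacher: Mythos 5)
Your proof is correct and follows essentially the same route as the paper: define $U(t)=\gamma(t)^{-1}\dot{\gamma}(t)\in\mathfrak{g}$, differentiate $\dot{\gamma}=\gamma U$ to get $\ddot{\gamma}=\gamma\left(U^2+\dot{U}\right)$, and pull the geodesic condition $\ddot{\gamma}(t)\in\SN_{\gamma(t)}(G,\GL(n,\mathbb{R}))$ back to the identity via the left-invariance in \cref{lemma:semi-normal left invariant}. Your only additions --- spelling out uniqueness from the invertibility of $\gamma(t)$ and verifying $U(t)\in\mathfrak{g}$ for all $t$ --- are points the paper leaves implicit, so this is the same argument, just slightly more complete.
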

\begin{proof}
Since $\dot{\gamma}(t)\in \gamma(t)\mathfrak{g}$, we can write $\dot{\gamma}(t) = \gamma(t)U(t)$ for a curve $U(t)$ in $\mathfrak{g}$. By differentiating $\dot{\gamma}(t) = \gamma(t)U(t)$, we obtain 
\[
\ddot{\gamma}(t) = \gamma(t)(U(t)^2 + \dot{U}(t)) \in \SN_{\gamma(t)}(G,\GL(n,\mathbb{R})).
\]
This implies that $U(t)^2 + \dot{U(t)} \in \SN_{\I_n}(G,\GL(n,\mathbb{R}))$.
\end{proof}

\begin{proposition}\label{prop:U(t)}
If $\gamma(t)$ is a geodesic passing through $A$ with tangent direction $AU$, then $\gamma(t) = A \exp(\int_{0}^t U(\tau) d\tau)$, where $U(t)$ is the curve in $\mathfrak{g}$ determined in \cref{lemma:U(t)}. 
\end{proposition}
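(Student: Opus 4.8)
The plan is to recover the geodesic $\gamma$ from the data $U(t)$ supplied by \cref{lemma:U(t)} by integrating the first-order matrix differential equation that $\gamma$ already satisfies. First I would left-translate to the identity: setting $\beta(t) := A^{-1}\gamma(t)$, the relation $\dot{\gamma}(t) = \gamma(t)U(t)$ from \cref{lemma:U(t)} becomes
\begin{equation*}
  \dot{\beta}(t) = \beta(t)\,U(t),\qquad \beta(0) = \I_n,
\end{equation*}
so the proposition reduces to showing that the unique solution of this linear matrix ODE is $\beta(t) = \exp\!\left(\int_0^t U(\tau)\,d\tau\right)$; multiplying back by $A$ on the left then yields the claimed formula.

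Next I would introduce the candidate $\Phi(t) := \exp\!\left(\int_0^t U(\tau)\,d\tau\right)$, which clearly satisfies $\Phi(0) = \I_n$ and takes values in $G$, since $\int_0^t U(\tau)\,d\tau \in \mathfrak{g}$ (as $\mathfrak{g}$ is a linear subspace and $U(\tau)\in\mathfrak{g}$ for all $\tau$) and $\exp(\mathfrak{g})\subseteq G$. It then remains to verify that $\Phi$ solves the same ODE, after which uniqueness of solutions for linear matrix ODEs (Picard--Lindel\"of) forces $\beta = \Phi$.

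The main obstacle is precisely this differentiation step. Writing $V(t) := \int_0^t U(\tau)\,d\tau$, the derivative of the matrix exponential is in general
\begin{equation*}
  \frac{d}{dt}\exp\bigl(V(t)\bigr) = \exp\bigl(V(t)\bigr)\,\frac{1 - e^{-\operatorname{ad}_{V(t)}}}{\operatorname{ad}_{V(t)}}\bigl(\dot{V}(t)\bigr),
\end{equation*}
so that $\dot{\Phi}(t) = \Phi(t)U(t)$ holds exactly when $U(t) = \dot{V}(t)$ commutes with its antiderivative $V(t)$, equivalently when $[U(s),U(t)] = 0$ for all $s,t$. I would therefore have to argue that this commutativity holds for the geodesics under consideration, exploiting the defining constraint $U(t)^2 + \dot{U}(t)\in\SN_{\I_n}(G,\GL(n,\mathbb{R}))$ together with the structure of $\mathfrak{g}$ and of the semi-normal space at the identity; in the commuting case the higher time-ordering (Magnus) corrections vanish and $\Phi$ solves the ODE on the nose. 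When commutativity fails one cannot expect the closed form $\exp(\int_0^t U)$ and must instead interpret the formula through the time-ordered (multiplicative) integral, so pinning down this hypothesis is the delicate point, and it is discharged concretely for the specific matrix Lie groups considered in this paper.
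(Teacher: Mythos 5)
Your reduction to the linear matrix ODE $\dot{\beta}(t)=\beta(t)U(t)$, $\beta(0)=\I_n$, followed by an appeal to uniqueness, is precisely the argument the paper leaves unwritten: \cref{prop:U(t)} is stated with no proof at all, as if it were immediate integration of $\dot{\gamma}=\gamma U$. You are more careful than the paper in isolating the one nontrivial step: the candidate $\Phi(t)=\exp\bigl(\int_0^t U(\tau)\,d\tau\bigr)$ satisfies $\dot{\Phi}=\Phi U$ only when $U(t)$ commutes with its antiderivative $V(t)=\int_0^t U(\tau)\,d\tau$, for instance when $[U(s),U(t)]=0$ for all $s,t$. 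That diagnosis is exactly right.

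The gap is your closing claim that this commutativity ``is discharged concretely for the specific matrix Lie groups considered in this paper.'' It is not discharged anywhere --- neither by you nor by the paper --- and it is in fact generically false for the paper's own examples. Take $G=\O(2,1)$ and the curve $U(t)$ produced by \cref{lemma: characterization of geodesics} with $\Delta_1=\bigl[\begin{smallmatrix}0&1\\-1&0\end{smallmatrix}\bigr]$, $\Delta_2=(1,0)$, $\Delta_3=0$: a direct computation gives $\dot{U}(0)=\bigl[\begin{smallmatrix}0&0&0\\0&0&1\\0&1&0\end{smallmatrix}\bigr]$ and $[U(0),\dot{U}(0)]\neq 0$, and the Magnus expansion of the true solution of $\dot{\gamma}=\gamma U$ then differs from $\exp\bigl(\int_0^t U\bigr)$ by a nonzero $O(t^3)$ term proportional to $[U(0),\dot{U}(0)]$. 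So the obstruction you flagged is not a technicality to be checked case by case; it actually obtains, and \cref{prop:U(t)} (and with it the closed form in \cref{prop:characterization-of-geodesics-indef}) is correct as literally stated only when the family $\{U(t)\}$ commutes --- e.g.\ when $U$ is constant, which is the situation of \cref{cor:geodesic-exponential}, the $\O(n)$ geodesics with $U_2(t)=\Delta_2$, and the $\SL(2,\mathbb{R})$ case --- or when $\exp\bigl(\int_0^t U(\tau)\,d\tau\bigr)$ is reinterpreted as the time-ordered (product) integral, as your last sentence suggests. In short: your proof strategy coincides with the paper's implicit one and correctly locates the delicate point, but the hypothesis needed to finish it genuinely fails, so what you have uncovered is a gap in the proposition itself rather than one you could have closed.
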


\begin{corollary}\label{cor:geodesic-constant speed}
The geodesic curve $\gamma(t)$ passing through $A\in \O(p,q)$ with direction $A\I_{p,q} U$ in \cref{eqn: characterization of geodesics 2} is of constant speed.
\end{corollary}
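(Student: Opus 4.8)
The plan is to show that the \emph{signed speed} $\langle \dot\gamma(t),\dot\gamma(t)\rangle$ is constant along $\gamma$; since the speed is by definition $\sqrt{\lvert\langle\dot\gamma(t),\dot\gamma(t)\rangle\rvert}$, this immediately yields the claim and, as a bonus, shows that the causal character (timelike, spacelike, or null) of $\dot\gamma$ is preserved. The argument is purely intrinsic to the notion of embedded geodesic in \cref{defn:geodesic} and does not really depend on the explicit form of $\gamma$ recorded in \cref{prop:U(t)}; it works verbatim for geodesics on any (possibly degenerate) semi-Riemannian submanifold.

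First I would invoke the metric compatibility of the Levi-Civita connection on the ambient manifold $\GL(n,\mathbb{R})$, i.e.\ \cref{eqn:metric compactible}, in its form for vector fields along a curve (see \cite[Chapter 3]{ONeill1983}): for the curve $\gamma$ on $\O(p,q)$,
\begin{equation*}
  \frac{d}{dt}\left\langle \dot\gamma(t),\dot\gamma(t)\right\rangle = 2\left\langle \frac{D}{dt}\dot\gamma(t),\dot\gamma(t)\right\rangle,
\end{equation*}
where $\frac{D}{dt}$ denotes the covariant derivative on $\GL(n,\mathbb{R})$ along $\gamma$. Because $\gamma$ is an embedded geodesic, \cref{defn:geodesic} guarantees $\frac{D}{dt}\dot\gamma(t)\in \SN_{\gamma(t)}(\O(p,q),\GL(n,\mathbb{R}))$, and by the very definition of the semi-normal space every such vector is $\langle\cdot,\cdot\rangle$-orthogonal to all of $\T_{\gamma(t)}\O(p,q)$. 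Since the velocity $\dot\gamma(t)$ is tangent to $\O(p,q)$, the inner product on the right-hand side vanishes identically, whence $\frac{d}{dt}\langle\dot\gamma,\dot\gamma\rangle\equiv 0$.

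For completeness I would also sketch the explicit computational verification using \cref{prop:U(t),lemma:U(t)}, which is the route suggested by the statement's reference to the characterization of geodesics. Writing $\dot\gamma(t)=\gamma(t)U(t)$ with $U(t)\in\mathfrak{o}(p,q)$ as in \cref{lemma:U(t)}, left-invariance of the metric gives $\langle\dot\gamma,\dot\gamma\rangle=\tr\bigl(U(t)^{\top}\I_{p,q}U(t)\bigr)$, so that $\frac{d}{dt}\langle\dot\gamma,\dot\gamma\rangle=2\tr\bigl(U^{\top}\I_{p,q}\dot U\bigr)$. By \cref{lemma:U(t)} one has $U^2+\dot U\in\SN_{\I_n}(\O(p,q),\GL(n,\mathbb{R}))$, which by \cref{prop:normal v.s. semi-normal} means $\tr\bigl((U^2+\dot U)^{\top}\I_{p,q}V\bigr)=0$ for all $V\in\mathfrak{o}(p,q)$; taking $V=U$ reduces the claim to the single identity $\tr(\I_{p,q}U^3)=0$. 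This last identity is the only genuine computation: from the defining relation $U^{\top}\I_{p,q}=-\I_{p,q}U$ of $\mathfrak{o}(p,q)$ one gets $(U^{\top})^3\I_{p,q}=-\I_{p,q}U^3$, hence $\tr(\I_{p,q}U^3)=\tr\bigl((U^{\top})^3\I_{p,q}\bigr)=-\tr(\I_{p,q}U^3)$, forcing it to vanish.

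The main obstacle is not analytic but one of bookkeeping: one must be careful that ``speed'' in the indefinite setting refers to $\sqrt{\lvert\langle\dot\gamma,\dot\gamma\rangle\rvert}$, so that the naturally conserved quantity is the signed scalar product rather than a genuine norm, and that the along-the-curve version of metric compatibility is the correct instrument (rather than \cref{eqn:metric compactible} applied to globally defined vector fields, which need not restrict to $\gamma$). Once these points are settled, the conclusion is immediate from the orthogonality of the semi-normal space to the tangent space.
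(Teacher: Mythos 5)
Your proposal is correct, but it takes a genuinely different route from the paper's. The paper proves \cref{cor:geodesic-constant speed} by direct substitution: it plugs the closed-form solution $U(t)$ of \cref{lemma: characterization of geodesics} into $\widetilde{g}_{\gamma(t)}(\dot{\gamma}(t),\dot{\gamma}(t))=\tr\left(U(t)^{\top}\I_{p,q}U(t)\right)$, where the off-diagonal blocks of $U(t)$ are $\Delta_2$ (and $\Delta_2^{\top}$) conjugated by the orthogonal matrices $\exp(\pm\Delta_1 t)$, $\exp(\pm\Delta_3 t)$; by orthogonal invariance of the Frobenius norm they contribute $\tr(\Delta_2^{\top}\Delta_2)$ with opposite signs and cancel, leaving the $t$-independent expression $-\tr(U_1^2)+\tr(U_3^2)$ in the constant diagonal blocks. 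That computation buys something your argument does not: the explicit value of the signed speed, which is precisely what \cref{cor:geodesic-energy} then integrates to get the energy. Conversely, your first argument buys generality: metric compatibility together with the defining orthogonality of $\SN$ to the tangent space shows that \emph{every} embedded geodesic in the sense of \cref{defn:geodesic}, on any (possibly degenerate) semi-Riemannian submanifold, has constant signed scalar square $\langle\dot{\gamma},\dot{\gamma}\rangle$ --- so constancy of speed and of causal character is a structural fact rather than a feature of $\O(p,q)$, and it needs no closed form for $\gamma$. Your second, Lie-algebraic verification is also correct in every step: taking $V=U$ in the condition $U^2+\dot{U}\in\SN_{\I_n}(\O(p,q),\GL(n,\mathbb{R}))$ from \cref{lemma:U(t)}, and killing the remaining term via $\tr(\I_{p,q}U^3)=0$ from $U^{\top}\I_{p,q}=-\I_{p,q}U$, is exactly right, and it is a cleaner conceptual explanation of the cancellation the paper obtains by inspection.

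One point you should make explicit in the abstract route: in the matrix Lie group section the paper characterizes geodesics by $\ddot{\gamma}(t)\in\SN_{\gamma(t)}(G,\GL(n,\mathbb{R}))$, i.e., with the flat second derivative --- the Levi-Civita connection of the constant-coefficient metric $\tr(U^{\top}\I_{p,q}V)$ on $\mathbb{R}^{n\times n}$ --- while the metric used to define $\SN$ is the left-invariant one, whose Levi-Civita connection on $\GL(n,\mathbb{R})$ is not flat. So ``metric compatibility of the Levi-Civita connection on $\GL(n,\mathbb{R})$'' is, as literally stated, invoking compatibility of a connection with a metric of which it is not the Levi-Civita connection. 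The fix is the observation (implicit in \cref{exm:indef-orth-group}: $A^{\top}\I_{p,q}A=\I_{p,q}$ gives $\langle U,V\rangle_A=\tr(U^{\top}\I_{p,q}V)$) that the left-invariant and flat ambient metrics coincide at every point of $\O(p,q)$, after which the one-line computation $\frac{d}{dt}\tr(\dot{\gamma}^{\top}\I_{p,q}\dot{\gamma})=2\tr(\ddot{\gamma}^{\top}\I_{p,q}\dot{\gamma})=0$ is valid, since $\ddot{\gamma}(t)\in\SN_{\gamma(t)}$ and $\dot{\gamma}(t)\in\T_{\gamma(t)}\O(p,q)$. This is a framing gap only; your second computation sidesteps it entirely and is airtight within the paper's framework.
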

\begin{proof}
We calculate $\widetilde{g}_{\gamma(t)}(\dot{\gamma}(t),\dot{\gamma}(t))$. By \cref{lemma: characterization of geodesics}, we have 
\[
\widetilde{g}_{\gamma(t)}(\dot{\gamma}(t),\dot{\gamma}(t))  =- \tr(U_1^2) +  \tr(U_3^2) . 
\]
\end{proof}

\begin{corollary}\label{cor:geodesic-energy}
The energy $\E(t)$ of the geodesic $\gamma(t)$ in \cref{eqn: characterization of geodesics 2} is 
\begin{equation*}
  E \left( t \right)=(- \tr(U_1^2) +  \tr(U_3^2))t.
\end{equation*}
\end{corollary}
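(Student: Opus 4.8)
The plan is to reduce the computation of the energy to the constant-speed property already established in \cref{cor:geodesic-constant speed}. First I would recall the definition of the energy functional of a curve $\gamma$ over the parameter interval $[0,t]$ in the semi-Riemannian setting,
\[
\E(t) = \int_0^t \widetilde{g}_{\gamma(s)}\bigl(\dot{\gamma}(s),\dot{\gamma}(s)\bigr)\,ds,
\]
noting that the absence of a factor $\tfrac{1}{2}$ in the claimed formula fixes the normalization convention we adopt here.

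Next I would invoke \cref{cor:geodesic-constant speed}, which asserts that the geodesic $\gamma(t)$ of \cref{eqn: characterization of geodesics 2} has constant speed, and more precisely that
\[
\widetilde{g}_{\gamma(s)}\bigl(\dot{\gamma}(s),\dot{\gamma}(s)\bigr) = -\tr(U_1^2) + \tr(U_3^2)
\]
for every $s$ in the parameter interval. Because the right-hand side does not depend on $s$, the integrand is constant, and the energy integral collapses to the constant value multiplied by the length of the interval:
\[
\E(t) = \bigl(-\tr(U_1^2)+\tr(U_3^2)\bigr)\int_0^t ds = \bigl(-\tr(U_1^2)+\tr(U_3^2)\bigr)\,t,
\]
which is exactly the asserted expression.

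I expect there to be essentially no obstacle here: the statement is an immediate corollary of the constant-speed result, and the entire argument amounts to integrating a constant. The only points requiring a moment of care are matching the normalization of the energy functional (the claimed formula carries no factor $\tfrac{1}{2}$, so the energy is taken as the bare integral of $\widetilde{g}(\dot{\gamma},\dot{\gamma})$) and confirming that the value of the squared speed supplied by \cref{cor:geodesic-constant speed} is indeed $-\tr(U_1^2)+\tr(U_3^2)$, which has already been computed there by means of \cref{lemma: characterization of geodesics}.
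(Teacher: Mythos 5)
Your proposal is correct and follows exactly the paper's own argument: the energy is by definition the integral of $\widetilde{g}_{\gamma(\tau)}(\dot{\gamma}(\tau),\dot{\gamma}(\tau))$, which is constant by \cref{cor:geodesic-constant speed}, so the integral evaluates to $(-\tr(U_1^2)+\tr(U_3^2))t$. Your additional remark about the normalization convention (no factor $\tfrac{1}{2}$) correctly matches the convention implicit in the paper's definition.
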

\begin{proof}
By definition and \cref{cor:geodesic-constant speed}, we have
\[
\E(t) = \int_0^t \widetilde{g}_{\gamma(\tau)} (\dot{\gamma}(\tau),\dot{\gamma}(\tau)) d\tau =  (- \tr(U_1^2) +  \tr(U_3^2)) t.
\]
\end{proof}

The following characterization of the parallel transport of a tangent vector along a geodesic on $G$ can be easily obtained by unravelling \cref{defn:parallel transport}.
\begin{lemma}
Let $\Delta(t)$ be a parallel transport of $\Delta\in \T_A G$ along a geodesic curve $\gamma(t)$ passing through $A\in G$ with  tangent direction $AU \in \T_A G$. Then
\[
\Delta(t) = \gamma(t) V(t),
\]
where $V(t)$ is a curve in $\mathfrak{g}$ such that $U(t)V(t) + \dot{V}(t)\in \SN_{\I_n}(G,\GL(n,\mathbb{R}))$ and $V(0) = \Delta$. Here $U(t)$ is the curve in $\mathfrak{g}$ determined in \cref{lemma:U(t)}.
\end{lemma}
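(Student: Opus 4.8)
The plan is to mirror the derivation of the geodesic equation in \cref{lemma:U(t)} and \cref{prop:U(t)}, with the transported field $\Delta$ playing the role that $\dot\gamma$ plays there. First I would observe that, since a parallel transport $\Delta(t)$ is by construction a vector field along $\gamma$ that is tangent to $G$, we have $\Delta(t)\in \T_{\gamma(t)}G=\gamma(t)\mathfrak{g}$ for every $t$; because $\gamma(t)\in G\subseteq\GL(n,\mathbb{R})$ is invertible, this lets me write $\Delta(t)=\gamma(t)V(t)$ for the uniquely determined curve $V(t)=\gamma(t)^{-1}\Delta(t)$ taking values in $\mathfrak{g}$. The initial condition $\Delta(0)=\Delta$ then fixes $V(0)=A^{-1}\Delta$.

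The heart of the argument is a single differentiation. Using the same convention for the ambient covariant derivative as in \cref{defn:geodesic} and the geodesic computation of \cref{lemma:U(t)} (so that $\frac{D}{dt}$ acts as the ordinary derivative of the matrix-valued curve, exactly as $\ddot\gamma$ appears there), and substituting $\dot\gamma(t)=\gamma(t)U(t)$ from \cref{lemma:U(t)}, I would compute
\[
\frac{D}{dt}\bigl(\Delta(t)\bigr)=\frac{d}{dt}\bigl(\gamma(t)V(t)\bigr)=\dot\gamma(t)V(t)+\gamma(t)\dot V(t)=\gamma(t)\bigl(U(t)V(t)+\dot V(t)\bigr).
\]
Thus the covariant derivative of $\Delta(t)$ is $\gamma(t)$ times the element $U(t)V(t)+\dot V(t)$ of $\mathbb{R}^{n\times n}$.

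It then remains only to translate the defining condition of parallel transport. By \cref{defn:parallel transport}, $\Delta(t)$ is a parallel transport precisely when $\frac{D}{dt}\Delta(t)\in\SN_{\gamma(t)}(G,\GL(n,\mathbb{R}))$ for all $t$. Invoking the left-invariance of the semi-normal distribution established in \cref{lemma:semi-normal left invariant}, namely $\SN_{\gamma(t)}(G,\GL(n,\mathbb{R}))=\gamma(t)\,\SN_{\I_n}(G,\GL(n,\mathbb{R}))$, and cancelling the invertible factor $\gamma(t)$ on the left, this is equivalent to
\[
U(t)V(t)+\dot V(t)\in\SN_{\I_n}(G,\GL(n,\mathbb{R})),
\]
which together with the fixed initial value of $V$ is exactly the asserted characterization.

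There is no genuinely hard step here: the whole computation is an unravelling of the definitions, and the substantive inputs are just the product rule and \cref{lemma:semi-normal left invariant}. The one point that requires care is the interpretive convention used throughout this subsection, that the ambient covariant derivative along a curve in $\GL(n,\mathbb{R})$ is identified with the ordinary derivative in $\mathbb{R}^{n\times n}$; this is precisely what legitimizes the product rule above and keeps the result consistent with \cref{prop:U(t)}. I would also flag the minor notational point that, strictly, $V(0)=A^{-1}\Delta\in\mathfrak{g}$ rather than $V(0)=\Delta$, the two agreeing under the left-translation identification $\T_A G\cong\mathfrak{g}$.
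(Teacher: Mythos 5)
Your proposal is correct and matches the paper's intent exactly: the paper offers no written proof beyond the remark that the lemma is ``easily obtained by unravelling \cref{defn:parallel transport}'', and your computation---writing $\Delta(t)=\gamma(t)V(t)$, differentiating with the product rule, substituting $\dot{\gamma}(t)=\gamma(t)U(t)$, and cancelling $\gamma(t)$ via the left-invariance in \cref{lemma:semi-normal left invariant}---is precisely the analogue of the paper's own proof of \cref{lemma:U(t)} with $\Delta(t)$ in place of $\dot{\gamma}(t)$. Your two flagged caveats are both apt: the identification of the ambient covariant derivative with the ordinary derivative in $\mathbb{R}^{n\times n}$ is indeed the (tacit) convention the paper uses when it writes $\ddot{\gamma}(t)$ in \cref{lemma:U(t)}, and strictly $V(0)=A^{-1}\Delta$, the statement's $V(0)=\Delta$ resting on the left-translation identification $\T_A G\cong\mathfrak{g}$.
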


\subsubsection{Indefinite Orthogonal Groups}
\label{sec:indef-orth-groups}

The definition of indefinite orthogonal groups $O \left( p,q \right)$ can be found in \cref{exm:indef-orth-group}. In this subsection we derive explicit formulae for the geodesic in $O \left( p,q \right)$ following \cref{defn:geodesic}.

\begin{proposition}\label{prop:SN-indef}
For each $A\in \O(p,q)$, we have 
\[
\SN_A(\O(p,q),\GL(p+q,\mathbb{R})) = \{ AS: S\in \operatorname{S}^2 \mathbb{R}^{p+q} \}
\]
\end{proposition}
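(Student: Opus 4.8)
The plan is to reduce the computation to the identity element via left-invariance, and then identify $\SN_{\I_n}$ as a Frobenius-orthogonal complement. Set $n := p+q$. First I would invoke \cref{lemma:semi-normal left invariant}, which gives $\SN_A(\O(p,q),\GL(n,\mathbb{R})) = \{AX : X\in \SN_{\I_n}(\O(p,q),\GL(n,\mathbb{R}))\}$. Consequently it suffices to prove $\SN_{\I_n}(\O(p,q),\GL(n,\mathbb{R})) = \operatorname{S}^2\mathbb{R}^{n}$, the space of symmetric matrices; applying the lemma then yields the claimed formula $\{AS : S\in \operatorname{S}^2\mathbb{R}^{n}\}$ at once. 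This reduction is clean because the semi-Riemannian metric is left-invariant: writing $U = AX$ and $V = AY$ with $Y\in\mathfrak{o}(p,q)$, one has $\langle AX, AY\rangle_A = \tr(X^\top \I_{p,q} Y) = \langle X, Y\rangle_{\I_n}$, so $U\in\SN_A$ precisely when $X\in\SN_{\I_n}$.

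At the identity the tangent space is $\mathfrak{o}(p,q)$ and the scalar product reads $\langle U,V\rangle_{\I_n} = \tr(U^\top \I_{p,q} V)$. Using the description \cref{eq:defn-indefinite-sitefel-lie-algebra}, I would first rewrite the Lie algebra as $\mathfrak{o}(p,q) = \{\I_{p,q} K : K\in \operatorname{Skew}(\mathbb{R}^{n\times n})\}$: indeed $X^\top \I_{p,q} + \I_{p,q} X = 0$ says exactly that $\I_{p,q} X$ is skew-symmetric, and $\I_{p,q}^2 = I_n$ lets one invert the substitution. Then, for an arbitrary $U$, the condition $U\in\SN_{\I_n}$ is $\tr(U^\top \I_{p,q} V) = 0$ for all $V\in\mathfrak{o}(p,q)$; substituting $V = \I_{p,q} K$ and again using $\I_{p,q}^2 = I_n$ collapses this to $\tr(U^\top K) = 0$ for every skew-symmetric $K$.

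The crux is then the purely linear-algebraic fact that $\tr(U^\top K) = 0$ for all skew $K$ holds if and only if $U$ is symmetric. This follows from the orthogonal decomposition $\mathbb{R}^{n\times n} = \operatorname{Sym}(\mathbb{R}^{n\times n})\oplus \operatorname{Skew}(\mathbb{R}^{n\times n})$ for the Frobenius pairing $\langle A,B\rangle_F = \tr(A^\top B)$, together with the non-degeneracy (in fact negative-definiteness, since $\tr(K^2) = -\|K\|_F^2$) of this pairing on $\operatorname{Skew}(\mathbb{R}^{n\times n})$. Concretely, decomposing $U = U_s + U_a$, the symmetric part $U_s$ always pairs to zero against skew matrices, while $K\mapsto \tr(U_a^\top K)$ being identically zero forces $U_a = 0$ by non-degeneracy. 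Hence $\SN_{\I_n}(\O(p,q),\GL(n,\mathbb{R})) = \operatorname{S}^2\mathbb{R}^{n}$, and \cref{lemma:semi-normal left invariant} finishes the proof.

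I do not expect a genuine obstacle here: the argument is a short computation whose only real content is the non-degeneracy of the trace form on skew-symmetric matrices, with everything else being bookkeeping with $\I_{p,q}$ and the relation $\I_{p,q}^2 = I_n$. As a consistency check I would note the dimension count $\dim \operatorname{S}^2\mathbb{R}^{n} = n(n+1)/2 = n^2 - \dim \O(p,q)$, which matches the rank $n^2 - \dim G$ of the semi-normal bundle asserted in \cref{prop:normal v.s. semi-normal}.
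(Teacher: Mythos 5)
Your proposal is correct, and it diverges from the paper's proof only after the (identical) first step. Like the paper, you reduce to the identity via \cref{lemma:semi-normal left invariant}; but where the paper then invokes the general relation $\SN_{\I_n}(\O(p,q),\GL(n,\mathbb{R})) = \I_{p,q}\,\N_{\I_n}(\O(p,q),\GL(n,\mathbb{R}))$ from \cref{prop:normal v.s. semi-normal} and quotes the known description $\N_{\I_n}(\O(p,q),\GL(n,\mathbb{R})) = \{\I_{p,q}S : S\in \operatorname{S}^2\mathbb{R}^{n}\}$ (so that the two factors of $\I_{p,q}$ cancel), you instead unwind the definition of the semi-normal space directly: you rewrite $\mathfrak{o}(p,q) = \I_{p,q}\,\mathrm{Skew}\left(\mathbb{R}^{n\times n}\right)$, use $\I_{p,q}^2 = I_n$ to collapse the pairing $\tr(U^{\top}\I_{p,q}V)$ to the Frobenius form $\tr(U^{\top}K)$, and conclude from the orthogonal decomposition $\mathbb{R}^{n\times n} = \mathrm{Sym}\left(\mathbb{R}^{n\times n}\right)\oplus \mathrm{Skew}\left(\mathbb{R}^{n\times n}\right)$. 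The two arguments bottom out in the same linear algebra---the paper's unproved assertion about $\N_{\I_n}$ is exactly the Sym/Skew Frobenius orthogonality that you make explicit---so your version is more self-contained, while the paper's is shorter because it reuses machinery that applies uniformly to all the matrix groups treated in its appendix; your closing dimension count against the rank statement in \cref{prop:normal v.s. semi-normal} is a sound consistency check. One cosmetic slip: the Frobenius pairing $\tr(A^{\top}B)$ is \emph{positive} definite on $\mathrm{Skew}\left(\mathbb{R}^{n\times n}\right)$, since $\tr(K^{\top}K) = \|K\|_F^2$; the identity $\tr(K^2) = -\|K\|_F^2$ you cite concerns the transpose-free trace form. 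Only non-degeneracy is used, so nothing in the argument breaks.
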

\begin{proof}
By \cref{lemma:semi-normal left invariant}, it is sufficient to prove
\[
\SN_{\I_{p+q}}(\O(p,q),\GL(p+q,\mathbb{R})) = \{ S: S\in \operatorname{S}^2 \mathbb{R}^{p+q} \}
\]
as $\langle \cdot  , \cdot \rangle$ is left-invariant. To this end, we notice that by \cref{prop:normal v.s. semi-normal}
\[
\SN_{\I_n}(\O(p,q),\GL(p+q,\mathbb{R})) = \I_{p,q} \N_{\I_n}(\O(p,q),\GL(n,\mathbb{R})).
\]
Now $\N_{\I_n}(\O(p,q),\GL(n,\mathbb{R}))$ consists of all matrices of the form $\I_{p,q}S$ where $S$ is symmetric, we may conclude that 
\[
\SN_{\I_n}(\O(p,q),\GL(p+q,\mathbb{R}))  = \{S: S\in \operatorname{S}^2 \mathbb{R}^{p+q}\}.
\]
\end{proof}

Let $\gamma(t)$ be a geodesic curve passing through $A\in \O(p,q)$ with direction $A\I_{p,q}\Delta$ for some skew-symmetric matrix $\Delta$. By \cref{prop:U(t)} and \cref{prop:SN-indef}, the curve $\gamma(t)$ can be written as 
\begin{equation}\label{eqn:geodesic for O(p,q)}
    \gamma(t) = A \exp\left(\int_0^t U(\tau) d\tau\right),
\end{equation}
where $U(t)$ is a curve in $\mathfrak{o}(p,q)$ such that 
\begin{equation}\label{eqn:U(t) for O(p,q)}
    U(0) = \I_{p,q} \Delta,\quad U(t)^2 + \dot{U(t)} \in \operatorname{S}^2 \mathbb{R}^{p+q}.
\end{equation}
We partition a $(p+q)\times (p+q)$ skew-symmetric matrix $Y$ as $\begin{bmatrix}
Y_1 & -Y_2^\mathsf{T} \\
Y_2 & Y_3
\end{bmatrix}$ where $Y_1\in \bigwedge^2 \mathbb{R}^p$, $Y_3\in \bigwedge^2 \mathbb{R}^q$ and $Y_2\in \mathbb{R}^{q\times p}$.

\begin{lemma}\label{lemma: characterization of geodesics}
Let $t\mapsto \gamma(t)$ be the geodesic passing through $A$ with direction $A\I_{p,q} \Delta$. Then the curve $U(t)$ in $\mathfrak{o}(p,q)$ satisfying \eqref{eqn:U(t) for O(p,q)} is  
\begin{equation}\label{eqn: characterization of geodesics 1}
U(t) = \begin{bmatrix}
-\Delta_1 & \exp(-\Delta_1 t) \Delta_2^\mathsf{T} \exp(\Delta_3 t) \\
\exp (-\Delta_3 t) \Delta_2 \exp (\Delta_1t) & \Delta_3
\end{bmatrix}.
\end{equation}
\end{lemma}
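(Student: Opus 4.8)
The plan is to read off $U(t)$ directly from its defining conditions \eqref{eqn:U(t) for O(p,q)}: it is a curve in $\mathfrak{o}(p,q)$ with $U(0)=\I_{p,q}\Delta$ whose expression $U(t)^2+\dot U(t)$ lies in $\SN_{\I_n}(\O(p,q),\GL(p+q,\mathbb{R}))$, which by \cref{prop:SN-indef} equals the space $\operatorname{S}^2\mathbb{R}^{p+q}$ of symmetric matrices. So the statement amounts to solving a matrix ODE subject to a symmetry constraint, and the work is to turn that constraint into an explicit evolution for the blocks of $U(t)$.

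First I would set up the block bookkeeping. From the description of $\mathfrak{o}(p,q)$ in \cref{exm:indef-orth-group}, a matrix lies in $\mathfrak{o}(p,q)$ exactly when its two diagonal blocks are skew-symmetric and its $(1,2)$-block is the transpose of its $(2,1)$-block; accordingly I write $U(t)=\begin{bmatrix} a(t) & c(t)^\mathsf{T}\\ c(t) & d(t)\end{bmatrix}$ with $a(t)\in\bigwedge^2\mathbb{R}^p$, $d(t)\in\bigwedge^2\mathbb{R}^q$, $c(t)\in\mathbb{R}^{q\times p}$, and read off the initial data $a(0)=-\Delta_1$, $d(0)=\Delta_3$, $c(0)=\Delta_2$ from $\I_{p,q}\Delta$.

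Next I would expand $U^2+\dot U$ blockwise and impose membership in $\operatorname{S}^2\mathbb{R}^{p+q}$. In each diagonal block the quadratic part ($a^2+c^\mathsf{T}c$ and $cc^\mathsf{T}+d^2$) is automatically symmetric whereas $\dot a,\dot d$ remain skew; symmetry of the diagonal blocks therefore forces $\dot a=\dot d=0$, so $a(t)\equiv-\Delta_1$ and $d(t)\equiv\Delta_3$ are constant. The off-diagonal blocks then leave a single first-order linear evolution for $c(t)$, namely the Sylvester-type equation $\dot c(t)=c(t)\Delta_1-\Delta_3\,c(t)$. Because $\Delta_1,\Delta_3$ are constant, its unique solution with $c(0)=\Delta_2$ is the conjugation $c(t)=\exp(-\Delta_3 t)\,\Delta_2\,\exp(\Delta_1 t)$, as one verifies by differentiating the product. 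Transposing and using that $\Delta_1,\Delta_3$ are skew, so that $\exp(\Delta_i t)^\mathsf{T}=\exp(-\Delta_i t)$, returns the $(1,2)$-block $\exp(-\Delta_1 t)\,\Delta_2^\mathsf{T}\,\exp(\Delta_3 t)$; reassembling the four blocks reproduces the asserted $U(t)$, and a direct check gives $U(0)=\I_{p,q}\Delta$ and $U(t)\in\mathfrak{o}(p,q)$ for every $t$.

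The main obstacle I anticipate is the off-diagonal step, and it is tied to the degeneracy of $\O(p,q)$. Since $\O(p,q)$ is a degenerate semi-Riemannian submanifold, the target space $\SN_{\I_n}=\operatorname{S}^2\mathbb{R}^{p+q}$ meets the tangent space $\mathfrak{o}(p,q)$ nontrivially---the intersection is exactly the fibre of the degenerate bundle computed in \cref{exm:indef-orth-group}---so the constraint $U^2+\dot U\in\operatorname{S}^2\mathbb{R}^{p+q}$ does not by itself pin down the motion in the degenerate directions. The delicate point is thus to isolate, from the blockwise symmetry conditions, the genuine linear evolution $\dot c=c\Delta_1-\Delta_3 c$ that selects the stated curve, and then to confirm that the exponential-sandwiched solution keeps $U^2+\dot U$ in $\operatorname{S}^2\mathbb{R}^{p+q}$ for all $t$ rather than only at $t=0$; the noncommutativity of $\exp(\Delta_1 t)$ and $\exp(\Delta_3 t)$ must be managed through the skew-symmetry relations, not by naive cancellation.
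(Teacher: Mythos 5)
You have correctly sensed where the danger lies, but the step you defer is precisely the one that fails, and it cannot be repaired: the off-diagonal symmetry condition does not produce the Sylvester equation $\dot c = c\Delta_1 - \Delta_3 c$ at all. With your parametrization $U(t)=\begin{bmatrix} a & c(t)^{\mathsf{T}}\\ c(t) & d\end{bmatrix}$, $a^{\mathsf{T}}=-a$, $d^{\mathsf{T}}=-d$, the $(2,1)$ block of $U^2+\dot U$ is $ca+dc+\dot c$, while the transpose of the $(1,2)$ block is $a^{\mathsf{T}}c^{\mathsf{T}}+c^{\mathsf{T}}d^{\mathsf{T}}+\dot c$ transposed once more, i.e.\ $-ca-dc+\dot c$. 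Equating the two, the $\dot c$ terms cancel identically, leaving the purely algebraic constraint $c(t)a+d\,c(t)=0$, i.e.\ $\Delta_3\,c(t)=c(t)\,\Delta_1$ for all $t$, and \emph{no condition whatsoever} on $\dot c$. So the membership $U^2+\dot U\in\operatorname{S}^2\mathbb{R}^{p+q}$ leaves the motion in the off-diagonal (degenerate) directions completely free, exactly as you feared, and there is no way to "isolate" the claimed evolution from it; instead it imposes a compatibility condition on the initial data, namely $\Delta_3\Delta_2=\Delta_2\Delta_1$. The verification you postpone also fails concretely: for $c(t)=\exp(-\Delta_3 t)\,\Delta_2\,\exp(\Delta_1 t)$ one computes $\left(U^2+\dot U\right)_{21}=0$ but $\left(U^2+\dot U\right)_{12}=2\left(-\Delta_1 c(t)^{\mathsf{T}}+c(t)^{\mathsf{T}}\Delta_3\right)$, which vanishes only under that same commutation relation --- and in that case $c(t)\equiv\Delta_2$ and the formula collapses to the constant solution $U(t)=\I_{p,q}\Delta$, consistent with \cref{cor:geodesic-exponential}.

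For what it is worth, your blockwise route is essentially the paper's own proof (the paper parametrizes $U(t)=\I_{p,q}\Delta(t)$ and sets $\operatorname{Skew}(U^2+\dot U)=0$), and the paper commits the same slip in the off-diagonal block, where $\dot\Delta_2$ in fact cancels between $\I_{p,q}\dot\Delta$ and its transpose. The internally consistent computation is the one the paper itself carries out for $\O(n)$ in Proposition \ref{prop:geodesic O(n)}: there the diagonal blocks are constant, the off-diagonal block is an \emph{arbitrary} curve in a fixed linear subspace (the analogue here being $\{X\in\mathbb{R}^{q\times p}:\Delta_3X=X\Delta_1\}$), a geodesic exists only if the initial $\Delta_2$ lies in that subspace, and when it exists it is non-unique --- which is what one should expect given the degeneracy and the absence of a Koszul derivative established in \cref{exm:indef-orth-group}. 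So the correct conclusion of your block analysis is an $\O(n)$-type existence/non-uniqueness statement, not \eqref{eqn: characterization of geodesics 1}; the lemma's asserted $U(t)$ simply does not satisfy \eqref{eqn:U(t) for O(p,q)} in general, and no proof along these lines (or any other) can establish it as stated.
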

\begin{proof}
We parametrize $U(t)$ as $U(t) = \I_{p,q} \Delta(t)$ where $\Delta(t)\in \bigwedge^2 \mathbb{R}^{p+q}$. Since $U(t)^2 + \dot{U}(t)$ is symmetric, we have 
\[
\operatorname{Skew}(U(t)^2 + \dot{U}(t)) = 0.
\]
This implies that
\[
\dot{\Delta_1}(t) =0,\quad \dot{\Delta_3}(t) = 0, \quad \dot{\Delta_2}(t) =  \Delta_2(t) \Delta_1(t) - \Delta_3(t) \Delta_2(t).
\]
from which we obtain $\Delta_1(t) = \Delta_1$, $\Delta_3(t) = \Delta_3$ and
\[
\Delta_2(t) = \exp(-\Delta_3 t) \Delta_2 \exp(\Delta_1 t).
\]
Therefore, we obtain 
\[
U(t) = \begin{bmatrix}
-\Delta_1 & \exp(-\Delta_1 t) \Delta_2^\mathsf{T} \exp(\Delta_3 t) \\
\exp (-\Delta_3 t) \Delta_2 \exp (\Delta_1t) & \Delta_3
\end{bmatrix}.
\]
\end{proof}
By \cref{prop:U(t)}, we obtain the following:
\begin{proposition}\label{prop:characterization-of-geodesics-indef}
The geodesic curve $\gamma(t)$ passing through $A\in \O(p,q)$ with direction $A\I_{p,q}\Delta$ is unique and is 
\begin{equation}\label{eqn: characterization of geodesics 2}
\gamma(t) =A \exp \left( \begin{bmatrix}
-\Delta_1 t &  \int_0^t \exp(-\Delta_1\tau) \Delta_2^\mathsf{T} \exp(\Delta_3\tau) d\tau  \\
 \int_0^t \exp(-\Delta_3\tau) \Delta_2 \exp(\Delta_1\tau) d \tau & \Delta_3 t
\end{bmatrix}  \right).
\end{equation} 
\end{proposition}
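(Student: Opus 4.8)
The plan is to obtain \cref{eqn: characterization of geodesics 2} by direct assembly of the two results immediately preceding it, \cref{prop:U(t)} and \cref{lemma: characterization of geodesics}, and to extract uniqueness from the uniqueness already built into those statements. First I would invoke \cref{prop:U(t)}, which asserts that any geodesic through $A$ with tangent direction $A\I_{p,q}\Delta$ has the closed form $\gamma(t) = A\exp\left(\int_0^t U(\tau)\,d\tau\right)$, where $U(t)$ is the unique $\mathfrak{o}(p,q)$-valued curve supplied by \cref{lemma:U(t)}, normalized so that $U(0)=\I_{p,q}\Delta$. This reduces the entire problem to evaluating the matrix integral $\int_0^t U(\tau)\,d\tau$.

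The second step is to substitute the explicit block expression for $U(t)$ from \cref{eqn: characterization of geodesics 1} and integrate entrywise. The two diagonal blocks are the constants $-\Delta_1$ and $\Delta_3$, which integrate to $-\Delta_1 t$ and $\Delta_3 t$; the off-diagonal blocks $\exp(-\Delta_1\tau)\Delta_2^\mathsf{T}\exp(\Delta_3\tau)$ and $\exp(-\Delta_3\tau)\Delta_2\exp(\Delta_1\tau)$ pass through the integral verbatim, producing precisely the four entries displayed in \cref{eqn: characterization of geodesics 2}. This establishes the stated formula.

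For the uniqueness clause I would argue as follows. A geodesic is determined by the second-order condition $\ddot\gamma(t)\in\SN_{\gamma(t)}(\O(p,q),\GL(p+q,\mathbb{R}))$ together with the initial data $\gamma(0)=A$ and $\dot\gamma(0)=A\I_{p,q}\Delta$. By \cref{lemma:U(t)} the associated curve $U(t)$ is uniquely characterized as the solution of the first-order system encoded by $U^2+\dot U\in\operatorname{S}^2\mathbb{R}^{p+q}$ with $U(0)=\I_{p,q}\Delta$, and \cref{eqn: characterization of geodesics 1} exhibits that unique solution explicitly; since the passage $U\mapsto\gamma$ of \cref{prop:U(t)} is deterministic, $\gamma$ inherits uniqueness. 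Equivalently, one may appeal to the standard existence-uniqueness theorem (Picard--Lindel\"of) for the geodesic ODE with the prescribed initial conditions, the explicit formula then certifying that this solution is globally defined.

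There is no deep obstacle here, since the substantive work was already carried out in \cref{lemma: characterization of geodesics}; the remaining argument is essentially bookkeeping. The only points that call for care are keeping the noncommuting factors $\exp(\pm\Delta_1\tau)$ and $\exp(\pm\Delta_3\tau)$ in their correct left and right positions inside the off-diagonal integrals, and threading the normalization $U(0)=\I_{p,q}\Delta$ consistently through \cref{lemma:U(t)}, \cref{prop:U(t)}, and the integral so that $\dot\gamma(0)=A\I_{p,q}\Delta$ is indeed recovered.
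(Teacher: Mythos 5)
Your proposal is correct and takes essentially the same route as the paper, whose proof of \cref{prop:characterization-of-geodesics-indef} consists precisely of invoking \cref{prop:U(t)} and integrating the block entries of \cref{eqn: characterization of geodesics 1} (constant diagonal blocks, off-diagonal blocks passing into the integral), with uniqueness resting, as in your primary argument, on the fact that the conditions in \cref{eqn:U(t) for O(p,q)} reduce to a determined linear ODE system for the blocks of $U(t)$ whose solution \cref{lemma: characterization of geodesics} exhibits explicitly. One small caution: your ``equivalently'' fallback to Picard--Lindel\"of does not apply here, because the geodesic condition of \cref{defn:geodesic} is a differential inclusion $\ddot{\gamma}(t)\in \SN_{\gamma(t)}(\O(p,q),\GL(p+q,\mathbb{R}))$ rather than a classical second-order ODE, and uniqueness can genuinely fail in this setting (cf.\ \cref{prop:geodesic O(n)}, where $U_2(t)$ may be an arbitrary curve in a linear subspace), so the uniqueness must come from the explicit resolution of the first-order system, exactly as your main argument has it.
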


\begin{corollary}\label{cor:geodesic-exponential}
The curve $\gamma(t) = A \exp(t \I_{p,q} \Delta)$ is a geodesic curve passing through $A$ with direction $\I_{p,q}Y$ if and only if $\Delta_3\Delta_2 = \Delta_2\Delta_1 $.
\end{corollary}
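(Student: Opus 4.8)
The plan is to reduce the geodesic condition directly to the symmetry of the single matrix $(\I_{p,q}\Delta)^2$, and then read off the constraint from a block computation. First I would verify that $\I_{p,q}\Delta\in\mathfrak{o}(p,q)$: since $\Delta$ is skew-symmetric and $\I_{p,q}^2=I_{p+q}$, one has $(\I_{p,q}\Delta)^\mathsf{T}\I_{p,q}+\I_{p,q}(\I_{p,q}\Delta)=\Delta^\mathsf{T}+\Delta=0$, so $\I_{p,q}\Delta$ lies in the Lie algebra by \cref{eq:defn-indefinite-sitefel-lie-algebra}. Consequently $\gamma(t)=A\exp(t\I_{p,q}\Delta)$ stays in $\O(p,q)$, with $\dot{\gamma}(t)=\gamma(t)\,\I_{p,q}\Delta\in\gamma(t)\mathfrak{o}(p,q)$ and $\ddot{\gamma}(t)=\gamma(t)\,(\I_{p,q}\Delta)^2$. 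In the language of \cref{lemma:U(t)}, this curve corresponds to the \emph{constant} choice $U(t)\equiv\I_{p,q}\Delta$, so $\dot{U}(t)=0$.

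Next I would invoke the geodesic characterization. By \cref{defn:geodesic}, $\gamma$ is an embedded geodesic precisely when $\ddot{\gamma}(t)\in\SN_{\gamma(t)}(\O(p,q),\GL(p+q,\mathbb{R}))$ for all $t$. Combining \cref{lemma:semi-normal left invariant} and \cref{prop:SN-indef}, this semi-normal space equals $\{\gamma(t)S:S\in\operatorname{S}^2\mathbb{R}^{p+q}\}$, so the condition $\gamma(t)(\I_{p,q}\Delta)^2\in\SN_{\gamma(t)}$ is equivalent to requiring that $(\I_{p,q}\Delta)^2$ be symmetric. Equivalently, this is the relation $U(t)^2+\dot{U}(t)\in\operatorname{S}^2\mathbb{R}^{p+q}$ from \cref{lemma:U(t)} specialized to the constant $U$, which shows both directions of the ``if and only if'' simultaneously: the curve is a geodesic exactly when $(\I_{p,q}\Delta)^2$ is symmetric.

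Finally I would carry out the block expansion. Writing $\I_{p,q}\Delta=\left[\begin{smallmatrix}-\Delta_1 & \Delta_2^\mathsf{T}\\ \Delta_2 & \Delta_3\end{smallmatrix}\right]$ with $\Delta_1,\Delta_3$ skew-symmetric, the diagonal blocks of $(\I_{p,q}\Delta)^2$ are $\Delta_1^2+\Delta_2^\mathsf{T}\Delta_2$ and $\Delta_3^2+\Delta_2\Delta_2^\mathsf{T}$, both automatically symmetric because the square of a skew matrix is symmetric; hence they impose no constraint. The off-diagonal blocks are $-\Delta_1\Delta_2^\mathsf{T}+\Delta_2^\mathsf{T}\Delta_3$ and $\Delta_3\Delta_2-\Delta_2\Delta_1$, and symmetry of the whole matrix forces the $(1,2)$ block to equal the transpose of the $(2,1)$ block. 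Using $\Delta_1^\mathsf{T}=-\Delta_1$ and $\Delta_3^\mathsf{T}=-\Delta_3$, this collapses to $\Delta_2^\mathsf{T}\Delta_3=\Delta_1\Delta_2^\mathsf{T}$, whose transpose is exactly $\Delta_3\Delta_2=\Delta_2\Delta_1$, as claimed.

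The only real work is the bookkeeping in this block expansion; there is no deeper obstacle, since the diagonal blocks drop out for free and the entire constraint is carried by matching the two off-diagonal blocks.
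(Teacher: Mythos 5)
Your proof is correct, and it takes a genuinely different and more economical route than the paper's. The paper derives the corollary from the full geodesic ODE machinery: it first solves for the curve $U(t)$ in \cref{lemma: characterization of geodesics}, obtains the explicit and unique geodesic formula \cref{eqn: characterization of geodesics 2} in \cref{prop:characterization-of-geodesics-indef}, and then observes that $A\exp(t\I_{p,q}\Delta)$ coincides with that unique geodesic precisely when $\Delta_3\Delta_2=\Delta_2\Delta_1$, in which case the blocks $\exp(-\Delta_3 t)\Delta_2\exp(\Delta_1 t)$ and $\exp(-\Delta_1 t)\Delta_2^{\mathsf{T}}\exp(\Delta_3 t)$ freeze at $\Delta_2$ and $\Delta_2^{\mathsf{T}}$; in particular its ``only if'' direction leans on the uniqueness assertion of \cref{prop:characterization-of-geodesics-indef}. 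You instead specialize the geodesic criterion directly to the constant ansatz $U(t)\equiv\I_{p,q}\Delta$: since $\ddot{\gamma}(t)=\gamma(t)(\I_{p,q}\Delta)^2$ and, by \cref{lemma:semi-normal left invariant} together with \cref{prop:SN-indef}, $\SN_{\gamma(t)}(\O(p,q),\GL(p+q,\mathbb{R}))=\left\lbrace \gamma(t)S: S\in\operatorname{S}^2\mathbb{R}^{p+q}\right\rbrace$ with $\gamma(t)$ invertible, the curve is a geodesic if and only if $(\I_{p,q}\Delta)^2$ is symmetric---both directions at once, with no appeal to uniqueness or to integrating the ODE. Your block computation checks out: the diagonal blocks $\Delta_1^2+\Delta_2^{\mathsf{T}}\Delta_2$ and $\Delta_3^2+\Delta_2\Delta_2^{\mathsf{T}}$ are automatically symmetric, and matching the off-diagonal blocks gives $\Delta_2^{\mathsf{T}}\Delta_3=\Delta_1\Delta_2^{\mathsf{T}}$, whose transpose is $\Delta_3\Delta_2=\Delta_2\Delta_1$. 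What each approach buys: the paper's recycles already-proved structure and exhibits the general formula collapsing to the one-parameter subgroup, while yours is self-contained, isolates the obstruction as exactly the failure of $(\I_{p,q}\Delta)^2$ to be symmetric, and sidesteps a mild logical wrinkle---\cref{lemma:U(t)} is stated only as a necessary condition, so your explicit fallback to \cref{defn:geodesic} (where the left-invariance argument supplies the converse) is precisely the right way to secure the ``if'' direction.
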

\begin{proof}
If $\gamma(t) = A \exp(t\I_{p,q} \Delta)$ is a geodesic curve, then from \cref{prop:characterization-of-geodesics-indef}, it is straightforward to verify that $\Delta_3\Delta_2 - \Delta_2\Delta_1 = 0$. Conversely, if $\Delta_3\Delta_2 - \Delta_2\Delta_1 = 0$, then \cref{eqn: characterization of geodesics 2} is reduced to 
\[
\gamma(t) = A \exp \left( \begin{bmatrix}
\Delta_1 t & \Delta_2^\mathsf{T} t \\
\Delta_2 t & \Delta_3 t
\end{bmatrix} \right) = A \exp (t \I_{p,q} \Delta).
\]
\end{proof}
In particular, if $p=0$ (resp. $q=0$), then $\Delta_3\Delta_2 = \Delta_2 \Delta_1$ always holds and \cref{cor:geodesic-exponential} shows that geodesics in $\O(q)$ (resp. $p=0$) are of the form $A \exp(t \Delta)$ for some skew-symmetric matrix $\Delta$. Moreover, \cref{cor:geodesic-exponential} already shows a big difference between Riemannian geometry and non-Riemannian geometry of $\O(p,q)$ (cf. \cite[Theorem 2.14]{Andruchow2014}).

\subsubsection{Orthogonal Groups}
\label{sec:orth-groups}

By \cref{prop:characterization-of-geodesics-indef}, the geodesic passing through a point $A\in \O(p,q)$ with tangent direction $U\in \T_A \O(p,q)$ exists. We will see that this is not always true. To that end, we consider $\O(n)$ for example. First, according to \cref{lemma:semi-normal left invariant} and \cref{prop:normal v.s. semi-normal}, we have the following
 \begin{proposition}
 For each $A\in \O(n)$, the semi-normal space of $\O(n)$ at $A$ is 
 \[
 \SN_A(\O(n),\GL(n,\mathbb{R})) = \{\I_{p,q}S: S\in \operatorname{S}^2\mathbb{R}^n \}.
 \]
 \end{proposition}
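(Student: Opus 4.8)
The plan is to follow the template of the proof of \cref{prop:SN-indef} for the indefinite orthogonal group: first reduce the computation at a general $A\in\O(n)$ to a computation at the identity using left-invariance, and then convert the semi-normal space into a \emph{Riemannian} normal space. I would begin by invoking \cref{lemma:semi-normal left invariant}; since the metric $\langle\cdot,\cdot\rangle$ is left-invariant, it suffices to identify $\SN_{\I_n}(\O(n),\GL(n,\mathbb{R}))$ and then left-translate by $A$. I would then apply \cref{prop:normal v.s. semi-normal} at the identity to obtain
\[
\SN_{\I_n}(\O(n),\GL(n,\mathbb{R})) = \I_{p,q}\,\N_{\I_n}(\O(n),\GL(n,\mathbb{R})),
\]
thereby reducing the whole problem to identifying the Riemannian normal space of $\O(n)$ at $\I_n$.

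The key step is this last identification. The tangent space $\T_{\I_n}\O(n)$ equals the Lie algebra $\mathfrak{o}(n)=\mathrm{Skew}(\mathbb{R}^{n\times n})$, while the Riemannian metric $(\cdot,\cdot)$ restricts at $\I_n$ to the Frobenius inner product $(U,V)=\tr(U^\mathsf{T}V)$. Since $\mathbb{R}^{n\times n}=\mathrm{Skew}(\mathbb{R}^{n\times n})\oplus\mathrm{Sym}(\mathbb{R}^{n\times n})$ is an orthogonal decomposition for this inner product, the normal space is precisely the symmetric matrices, i.e. $\N_{\I_n}(\O(n),\GL(n,\mathbb{R}))=\operatorname{S}^2\mathbb{R}^n$. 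Substituting this into the display above gives $\SN_{\I_n}(\O(n),\GL(n,\mathbb{R}))=\{\I_{p,q}S:S\in\operatorname{S}^2\mathbb{R}^n\}$, and left-translating by $A$ through \cref{lemma:semi-normal left invariant} yields the claimed description of $\SN_A(\O(n),\GL(n,\mathbb{R}))$, in exact analogy with \cref{prop:SN-indef}.

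I do not expect a genuine obstacle here, since the argument rests entirely on the two cited structural results plus one elementary linear-algebra fact. The only point requiring care is verifying that the Frobenius-orthogonal complement of $\mathrm{Skew}(\mathbb{R}^{n\times n})$ is exactly $\mathrm{Sym}(\mathbb{R}^{n\times n})$: if a matrix $S$ is Frobenius-orthogonal to every skew-symmetric matrix, then decomposing $S$ into its symmetric and skew parts and pairing it against its own skew part forces that part to vanish, so $S$ is symmetric; conversely, for symmetric $S$ and skew $U$ one has $\tr(U^\mathsf{T}S)=\tr(S^\mathsf{T}U)=\tr(SU)=-\tr(U^\mathsf{T}S)$, whence $\tr(U^\mathsf{T}S)=0$. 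This is standard, so once the reduction steps are in place the proposition follows immediately.
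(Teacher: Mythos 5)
Your proposal is correct and follows essentially the same route as the paper, which likewise obtains this proposition directly from \cref{lemma:semi-normal left invariant} and \cref{prop:normal v.s. semi-normal}; your only addition is to spell out the elementary fact that the Frobenius-orthogonal complement of $\mathrm{Skew}\left( \mathbb{R}^{n\times n} \right)$ is $\mathrm{Sym}\left( \mathbb{R}^{n\times n} \right)$, so that $\N_{\I_n}(\O(n),\GL(n,\mathbb{R}))=\operatorname{S}^2\mathbb{R}^n$, which the paper leaves implicit. One caveat on your final sentence: left-translating $\SN_{\I_n}(\O(n),\GL(n,\mathbb{R}))=\{\I_{p,q}S: S\in \operatorname{S}^2\mathbb{R}^n\}$ by $A$ via \cref{lemma:semi-normal left invariant} produces $\{A\,\I_{p,q}S: S\in \operatorname{S}^2\mathbb{R}^n\}$, not the displayed set $\{\I_{p,q}S\}$ (these subspaces genuinely differ for general $A\in\O(n)$), so the proposition as printed is missing the left factor $A$ --- an apparent typo, consistent with \cref{prop:SN-indef} and with the paper's subsequent geodesic computation, which imposes $\I_{p,q}\bigl(U(t)^2+\dot{U}(t)\bigr)\in \operatorname{S}^2\mathbb{R}^n$ precisely because $\ddot{\gamma}(t)=\gamma(t)\bigl(U(t)^2+\dot{U}(t)\bigr)$ must lie in $\gamma(t)\,\I_{p,q}\operatorname{S}^2\mathbb{R}^n$; your derivation is the correct one, and ``yields the claimed description'' holds only modulo that correction.
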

 Now if $\gamma(t)$ is a geodesic curve passing through $A$ with tangent direction $A\Delta$ where $\Delta = \begin{bmatrix}
 \Delta_1 & -\Delta_2^\mathsf{T} \\
 \Delta_2 & \Delta_3
 \end{bmatrix}\in \mathfrak{o}(n)$, then 
 \[
 \gamma(t) = A \exp \left(\int_0^t U(\tau) d\tau\right),
 \]
 where $U(t) = \begin{bmatrix}
 U_1(t) & -U_2(t)^\mathsf{T} \\
 U_2(t) & U_3(t)
 \end{bmatrix}$ is skew-symmetric satisfying 
 \[
 U(0) = \Delta, \quad \I_{p,q} (U(t)^2 + \dot{U}(t)) \in \operatorname{S}^2 \mathbb{R}^n.
 \]
 Hence we have 
 \[
 \operatorname{Skew}(\I_{p,q} (U(t)^2 + \dot{U}(t))) = 0,
 \]
 which implies  
 \[
 \dot{U_1}(t) = 0,\quad \dot{U_3}(t) =0,\quad U_2(t) U_1(t) + U_3(t) U_2(t) =0.
 \]
 Therefore, we may conclude that 
 \[
 U_1(t) = \Delta_1,\quad U_3(t) = \Delta_3,\quad U_2(t)\Delta_1 + \Delta_3 U_2(t) = 0.
 \]
 \begin{proposition}\label{prop:geodesic O(n)}
 On $\O(n)$, a geodesic passing through $A$ with the tangent direction $A\Delta$ exists if and only if the skew symmetric matrix $\Delta = \begin{bmatrix}
 \Delta_1 & -\Delta_2^\mathsf{T} \\
 \Delta_2 & \Delta_3
 \end{bmatrix}$ satisfies the relation 
 \[
 \Delta_2 \Delta_1 + \Delta_3 \Delta_2 = 0.
 \]
 If such a $\gamma$ exists, then $\gamma$ if of the form 
 \[
 \gamma(t) = A \exp \left( \begin{bmatrix}
 \Delta_1 t & -\int_0^t U_2(\tau)^\mathsf{T} d\tau \\
\int_0^t U_2(\tau) d\tau & \Delta_3t
\end{bmatrix}\right) 
\]
where $U_2$ is a curve in the linear subspace $\{X\in \mathbb{R}^{q\times p}: X \Delta_1 + \Delta_3 X = 0\}$ such that $U_2(0) = \Delta_2$. The square of the speed of $\gamma(t)$ is $\tr(\Delta^2_1) - \tr(\Delta^2_3)$ and the energy of $\gamma$ is $E(t) = t(\tr(\Delta^2_1) - \tr(\Delta^2_3))$.
\end{proposition}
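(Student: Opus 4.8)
The plan is to build directly on the reduction already carried out in the lead-up to the statement. There it is shown, via Proposition~\ref{prop:U(t)} and Lemma~\ref{lemma:U(t)}, that any geodesic through $A$ with direction $A\Delta$ has the form $\gamma(t)=A\exp(\int_0^t U(\tau)\,d\tau)$ for a curve $U(t)\in\mathfrak{o}(n)$ with $U(0)=\Delta$, and that the semi-normality condition forces
\[
U(t)=\begin{bmatrix} \Delta_1 & -U_2(t)^\mathsf{T} \\ U_2(t) & \Delta_3 \end{bmatrix},\qquad U_2(t)\Delta_1+\Delta_3 U_2(t)=0,
\]
with $U_1\equiv\Delta_1$ and $U_3\equiv\Delta_3$ constant. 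Thus the whole proposition reduces to analyzing this pointwise constraint on $U_2(t)$ together with the initial datum $U_2(0)=\Delta_2$.

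First I would settle the existence criterion. For necessity, evaluate the constraint $U_2(t)\Delta_1+\Delta_3 U_2(t)=0$ at $t=0$, where $U_2(0)=\Delta_2$; this gives exactly $\Delta_2\Delta_1+\Delta_3\Delta_2=0$. For sufficiency, I observe that this relation says precisely that $\Delta_2$ lies in the linear subspace $W:=\{X\in\mathbb{R}^{q\times p}:X\Delta_1+\Delta_3 X=0\}$. Choosing any smooth curve $U_2(t)$ in $W$ with $U_2(0)=\Delta_2$ (for instance the constant curve $U_2\equiv\Delta_2$) yields a skew-symmetric $U(t)\in\mathfrak{o}(n)$ meeting all the reduced conditions, so $\gamma(t)=A\exp(\int_0^t U(\tau)\,d\tau)$ is a bona fide geodesic in the sense of Definition~\ref{defn:geodesic}: it stays in $\O(n)$ because the integral of a skew-symmetric curve is skew-symmetric, and its acceleration lands in the semi-normal bundle precisely by the constraint. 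Substituting this $U(t)$ into Proposition~\ref{prop:U(t)} gives the displayed integral expression for $\gamma(t)$.

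Next I would compute the speed and energy. By left-invariance of the metric and $\dot{\gamma}(t)=\gamma(t)U(t)$, the squared speed equals $\tr(U(t)^\mathsf{T}\I_{p,q}U(t))$, which by skew-symmetry of $U(t)$ becomes $-\tr(U(t)\I_{p,q}U(t))$. Expanding in the block form, the two terms quadratic in $U_2$ are $\tr(U_2^\mathsf{T}U_2)$ and $\tr(U_2 U_2^\mathsf{T})$ and cancel, leaving the $t$-independent value $\tr(\Delta_1^2)-\tr(\Delta_3^2)$. Hence $\gamma$ has constant speed, and integrating over $[0,t]$ produces $E(t)=t(\tr(\Delta_1^2)-\tr(\Delta_3^2))$, mirroring the computations of Corollaries~\ref{cor:geodesic-constant speed} and~\ref{cor:geodesic-energy} for the indefinite orthogonal group.

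The one point requiring genuine care---and the feature distinguishing $\O(n)$ from $\O(p,q)$---is that the condition on $U_2(t)$ is purely algebraic rather than a differential equation. I would verify explicitly that when one imposes symmetry of $\I_{p,q}(U^2+\dot{U})$ and matches the off-diagonal blocks, the $\dot{U}_2$ contributions cancel identically, so no ODE for $U_2$ survives---only the algebraic relation $U_2(t)\Delta_1+\Delta_3 U_2(t)=0$. The consequence is that $U_2(t)$ is underdetermined: any curve in $W$ works, so the geodesic, when it exists, is not unique. I expect this non-uniqueness to be the conceptual crux rather than a technicality, as it is exactly the imprint of the degeneracy of the induced semi-Riemannian metric on $\O(n)$.
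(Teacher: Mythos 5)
Your proposal is correct and takes essentially the same route as the paper: the paper likewise reduces via \cref{lemma:U(t)} and \cref{prop:U(t)} to the conditions $\dot{U}_1(t)=\dot{U}_3(t)=0$ and $U_2(t)\Delta_1+\Delta_3U_2(t)=0$ extracted from $\operatorname{Skew}\bigl(\I_{p,q}(U(t)^2+\dot{U}(t))\bigr)=0$ (with the $\dot{U}_2$ terms cancelling, exactly as you observe), reads off necessity of $\Delta_2\Delta_1+\Delta_3\Delta_2=0$ at $t=0$, and obtains the same constant squared speed $\tr(\Delta_1^2)-\tr(\Delta_3^2)$ and energy by the same cancelling block traces. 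Your explicit sufficiency witness (the constant curve $U_2\equiv\Delta_2$, recovering $\gamma(t)=A\exp(t\Delta)$) and your emphasis on the non-uniqueness stemming from the purely algebraic constraint on $U_2$ are stated only implicitly in the paper (in the remark following the proposition and by contrast with \cref{prop:characterization-of-geodesics-indef}), but they are faithful to its argument.
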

In particular, if $\Delta_2 \Delta_1 + \Delta_3 \Delta_2 =0$ and we take $U_2(t) = \Delta_2$, then by Proposition \ref{prop:geodesic O(n)} $\gamma$ becomes 
\[
 \gamma(t) = A \exp(\Delta t),
 \]
 which is exactly the geodesic curve on $\O(n)$ with the usual Riemannian metric
 \begin{equation*}
   \left(U, V\right)_A = \tr((A^{-1}U)^\mathsf{T} A^{-1}V)
 \end{equation*}
where $A\in \O(n)$ and $U,V\in \T_A\O(n)$.

\subsubsection{Special Linear Groups with $p\neq q$}
\label{sec:spec-linear-group}

We notice that the Lie algebra of $\SL(n,\mathbb{R})$ consists of all traceless $n\times n$ matrices, which implies that
\[
\N_{\I_n} (\SL(n,\mathbb{R}),\GL(n,\mathbb{R})) = \{\lambda \I_{n}:\lambda\in \mathbb{R}\}.
\]
Therefore, from \cref{lemma:semi-normal left invariant} and \cref{prop:normal v.s. semi-normal}, we have the following
 \begin{proposition}\label{prop:semi-normal SL}
  For each $A\in \SL(n,\mathbb{R})$, the semi-normal space of $\SL(n,\mathbb{R})$ at $A$ is 
\[
\SN_A(\SL(n,\mathbb{R},\GL(n,\mathbb{R}))) = \{\lambda A\I_{p,q}: \lambda\in \mathbb{R}\}.
\]
In particular, 
\[
\SL(n,\mathbb{R})^\perp = \begin{cases}
\SL(n,\mathbb{R})\times \{0\},&\text{if}~p\ne q\\
\SN(\SL(n,\mathbb{R},\GL(n,\mathbb{R}))),&\text{otherwise}.
\end{cases}
\]
Hence $\SL(n,\mathbb{R})$ is a non-dengerate semi-Riemannian sub-manifold of $\GL(n,\mathbb{R})$ if and only if $p\ne q$.
\end{proposition}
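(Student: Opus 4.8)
The plan is to reduce the computation to the tangent space at the identity by exploiting left-invariance of the metric, determine the Riemannian normal space there, convert it to the semi-normal space, transport it to an arbitrary point, and finally intersect with $T_A\SL(n,\mathbb{R})$ to read off the degenerate bundle. First I would compute the Riemannian normal space $\N_{\I_n}(\SL(n,\mathbb{R}),\GL(n,\mathbb{R}))$. Since $T_{\I_n}\SL(n,\mathbb{R}) = \mathfrak{sl}(n,\mathbb{R})$ is precisely the space of traceless matrices and the accompanying Riemannian metric $(\cdot,\cdot)$ restricts at the identity to the Frobenius inner product $(U,V)_{\I_n} = \tr(U^\top V)$, the normal space is the Frobenius-orthogonal complement of the traceless matrices. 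A matrix $V$ satisfies $\tr(V^\top W) = 0$ for every traceless $W$ if and only if $V$ is a scalar multiple of $\I_n$, whence $\N_{\I_n}(\SL(n,\mathbb{R}),\GL(n,\mathbb{R})) = \{\lambda \I_n : \lambda \in \mathbb{R}\}$.

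Next I would apply \cref{prop:normal v.s. semi-normal} to pass from the normal to the semi-normal space at the identity, obtaining $\SN_{\I_n} = \I_{p,q}\N_{\I_n} = \{\lambda \I_{p,q} : \lambda \in \mathbb{R}\}$. Then \cref{lemma:semi-normal left invariant} transports this across the group by left-translation, yielding $\SN_A(\SL(n,\mathbb{R}),\GL(n,\mathbb{R})) = \{\lambda A\I_{p,q} : \lambda \in \mathbb{R}\}$ for every $A\in\SL(n,\mathbb{R})$, which is the first assertion of the proposition.

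For the degenerate bundle I would invoke \cref{lemma:SN v.s. X perp}, giving $\SL(n,\mathbb{R})_A^\perp = \SN_A \cap T_A\SL(n,\mathbb{R}) = \{\lambda A\I_{p,q}\} \cap A\mathfrak{sl}(n,\mathbb{R})$. Here $\lambda A\I_{p,q}$ lies in $A\mathfrak{sl}(n,\mathbb{R})$ exactly when $\lambda\I_{p,q}$ is traceless, i.e. when $\lambda\,\tr(\I_{p,q}) = \lambda(q-p) = 0$. The entire dichotomy therefore turns on the single scalar $\tr(\I_{p,q}) = q - p$: when $p\neq q$ this is nonzero, forcing $\lambda = 0$, so the degenerate fibre vanishes and $\SL(n,\mathbb{R})_A^\perp = \{0\}$ (the zero section $\SL(n,\mathbb{R})\times\{0\}$), whereas when $p = q$ the matrix $\I_{p,q}$ is traceless, every $\lambda$ survives, and $\SL(n,\mathbb{R})_A^\perp = \SN_A$. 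This establishes the case distinction and the criterion that $\SL(n,\mathbb{R})$ is non-degenerate precisely when $p\neq q$.

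I do not expect any serious obstacle; the argument is a short chain of reductions resting entirely on \cref{lemma:semi-normal left invariant}, \cref{prop:normal v.s. semi-normal}, and \cref{lemma:SN v.s. X perp}. The one step warranting a moment of care is the final intersection: one must correctly translate the membership condition $\lambda\I_{p,q}\in\mathfrak{sl}(n,\mathbb{R})$ into the trace identity $q - p = 0$ and note that, since $p+q = n$, the degenerate case $p = q$ also forces $n$ to be even. Everything else is formal bookkeeping with left-invariance.
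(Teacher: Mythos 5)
Your proof is correct and follows essentially the same route as the paper: the paper likewise notes that $\mathfrak{sl}(n,\mathbb{R})$ consists of traceless matrices, deduces $\N_{\I_n}(\SL(n,\mathbb{R}),\GL(n,\mathbb{R}))=\{\lambda \I_n:\lambda\in\mathbb{R}\}$, and then invokes \cref{lemma:semi-normal left invariant} and \cref{prop:normal v.s. semi-normal} to get $\SN_A=\{\lambda A\I_{p,q}\}$, leaving the degenerate-bundle dichotomy implicit. Your explicit final step---intersecting with $T_A\SL(n,\mathbb{R})$ via \cref{lemma:SN v.s. X perp} and reducing membership to $\lambda\tr(\I_{p,q})=\lambda(q-p)=0$---is exactly the bookkeeping the paper omits, and it is carried out correctly.
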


By \cref{prop:semi-normal SL} and \cite[Corollary 10]{ONeill1983}, we obtain the following
\begin{corollary}
If $p\ne q$, then the geodesic passing through $A\in \SL(n,\mathbb{R})$ with the tangent direction $AU\in \T_A \SL(n,\mathbb{R})$ exists and it is unique.
\end{corollary}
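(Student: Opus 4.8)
The plan is to reduce the statement to the classical existence-and-uniqueness theorem for geodesics on a non-degenerate semi-Riemannian manifold, using the non-degeneracy of $\SL(n,\mathbb{R})$ established in \cref{prop:semi-normal SL}. First I would record that, since $p\ne q$, \cref{prop:semi-normal SL} gives $\SL(n,\mathbb{R})^\perp = \SL(n,\mathbb{R})\times\{0\}$, so $\SL(n,\mathbb{R})$ is a non-degenerate semi-Riemannian sub-manifold of $\GL(n,\mathbb{R})$. Concretely, this means that at every $A\in\SL(n,\mathbb{R})$ the tangent space $\T_A\SL(n,\mathbb{R})$ is a non-degenerate subspace of $\T_A\GL(n,\mathbb{R})$ with respect to the ambient scalar product. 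By \cref{lem:nondegeneracy} we therefore obtain the orthogonal direct-sum decomposition
\begin{equation*}
  \T_A\GL(n,\mathbb{R}) = \T_A\SL(n,\mathbb{R}) \oplus \SN_A(\SL(n,\mathbb{R}),\GL(n,\mathbb{R})),
\end{equation*}
which yields a well-defined tangential projection $\mathrm{tan}_A:\T_A\GL(n,\mathbb{R})\to\T_A\SL(n,\mathbb{R})$. Since $\SN(\SL(n,\mathbb{R}),\GL(n,\mathbb{R}))$ is a smooth vector bundle by \cref{prop:normal v.s. semi-normal}, this projection varies smoothly with $A$.

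Next I would use this projection to define the induced Levi-Civita connection on $\SL(n,\mathbb{R})$ in the usual way, namely $\nabla_VW := \mathrm{tan}(\overline{D}_VW)$ for vector fields $V,W$ tangent to $\SL(n,\mathbb{R})$, where $\overline{D}$ denotes the Levi-Civita connection of $\GL(n,\mathbb{R})$. The crucial point is the equivalence recorded in \cite[Corollary 10]{ONeill1983}: a curve $\gamma$ on the non-degenerate submanifold $\SL(n,\mathbb{R})$ satisfies $\nabla_{\dot\gamma}\dot\gamma=0$ if and only if its ambient acceleration $\overline{D}/dt\,(\dot\gamma(t))$ is orthogonal to $\SL(n,\mathbb{R})$, i.e. lies in $\SN_{\gamma(t)}(\SL(n,\mathbb{R}),\GL(n,\mathbb{R}))$. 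This is exactly the defining condition of an embedded geodesic in \cref{defn:geodesic}. Hence embedded geodesics of $\SL(n,\mathbb{R})$ coincide with geodesics of the induced connection $\nabla$, and the problem is reduced to the existence and uniqueness of the latter.

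Finally, existence and uniqueness follow from the standard theory of ordinary differential equations. In any local chart the geodesic equation $\nabla_{\dot\gamma}\dot\gamma=0$ becomes a second-order ODE whose coefficients (the Christoffel symbols of $\nabla$) are smooth functions of the coordinates; by the Picard--Lindel\"of theorem there is a unique maximal solution with prescribed initial data $\gamma(0)=A$ and $\dot\gamma(0)=AU$. I do not expect any serious obstacle: the entire argument hinges only on $p\ne q$, which guarantees non-degeneracy and hence the existence of the orthogonal projection. The only point demanding care is the smoothness of that projection in $A$---needed so that $\nabla$ is a genuine smooth connection with smooth Christoffel symbols---but this is immediate from the smoothness of the semi-normal bundle in \cref{prop:normal v.s. semi-normal}. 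It is worth emphasizing that this is exactly where the hypothesis $p\ne q$ enters: when $p=q$ the submanifold is degenerate, the tangential projection $\mathrm{tan}$ is no longer defined, and \cite[Corollary 10]{ONeill1983} ceases to apply.
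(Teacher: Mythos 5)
Your proposal is correct and takes essentially the same route as the paper, whose proof consists precisely of citing \cref{prop:semi-normal SL} for non-degeneracy when $p\ne q$ together with \cite[Corollary 10]{ONeill1983}; you have simply made explicit the intermediate steps (the orthogonal splitting via \cref{lem:nondegeneracy}, the tangential projection and induced Levi-Civita connection, and the standard ODE existence--uniqueness argument) that the paper leaves implicit in that citation.
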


If $\gamma(t)$ is a geodesic curve passing $A\in \SL(n,\mathbb{R})$ with the tangent direction $A U\in \T_A \SL(n,\mathbb{R})$, then 
 \[
\gamma(t) = A \exp\left(\int_0^t U(\tau) d\tau\right),
\]
where $\tr(U(t)) = 0$ satisfying 
\begin{equation}\label{eqn:U(t) for SL}
    U(0) = U, \quad  U(t)^2 + \dot{U}(t) = \lambda(t) \I_{p,q}
\end{equation}
for some real valued function $\lambda(t)$. Moreover, from \eqref{eqn:U(t) for SL}, we also have 
\[
U(0)^2 + \dot{U}(0) = \lambda(0) \I_{p,q}.
\]
Together with the fact that $\tr(\dot{U}(0)) = 0$ as $\tr(U(t)) = 0$, we may conclude that 
\begin{equation}\label{eqn:U(t) for SL-1}
\lambda(0)(q-p) = \tr(U(0)^2) = \tr(U^2).
\end{equation}
\begin{proposition}
If $p=q$ and $\gamma$ is a geodesic curve passing through $A\in \SL(n,\mathbb{R})$ with the tangent direction $\dot{\gamma}(0) = AU\in \T_A \SL(n,\mathbb{R})$, then $\tr(U^2) = 0$.
\end{proposition}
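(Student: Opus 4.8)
The plan is to extract the result directly from the trace identity \eqref{eqn:U(t) for SL-1} established immediately above the statement, so the argument is short. First I would recall the characterization of geodesics on $\SL(n,\mathbb{R})$ used in the preceding discussion: writing $\dot{\gamma}(t)=\gamma(t)U(t)$ with $U(t)$ a curve in the traceless Lie algebra $\mathfrak{sl}(n,\mathbb{R})$, the geodesic condition $\ddot{\gamma}(t)\in\SN_{\gamma(t)}(\SL(n,\mathbb{R}),\GL(n,\mathbb{R}))$ together with \cref{prop:semi-normal SL} forces $U(t)^2+\dot{U}(t)=\lambda(t)\I_{p,q}$ for some scalar function $\lambda(t)$. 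Specializing to $t=0$ gives $U^2+\dot{U}(0)=\lambda(0)\I_{p,q}$.

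Next I would take the trace of this identity at $t=0$. Because $U(t)$ takes values in $\mathfrak{sl}(n,\mathbb{R})$ we have $\tr(U(t))\equiv 0$, and differentiating yields $\tr(\dot{U}(0))=0$; combining this with $\tr(\I_{p,q})=q-p$ reproduces exactly \eqref{eqn:U(t) for SL-1}, namely $\tr(U^2)=\lambda(0)(q-p)$. The only point worth checking carefully is that the scalar $\lambda(0)$ is indeed unconstrained a priori, so that no further information about $U$ can be read off the identity beyond this trace relation.

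Finally, setting $p=q$ makes the coefficient $q-p$ vanish, whence $\tr(U^2)=\lambda(0)(q-p)=0$, which is the assertion. I expect essentially no obstacle here: the entire content lies in the trace computation, and the degenerate case $p=q$ is precisely the one in which the factor $q-p$ collapses, decoupling $\tr(U^2)$ from the otherwise undetermined value of $\lambda(0)$. This is also the place where the degeneracy of the semi-Riemannian structure on $\SL(n,\mathbb{R})$ (cf.\ \cref{prop:semi-normal SL}, where $\SL(n,\mathbb{R})^\perp$ jumps when $p=q$) manifests at the level of admissible geodesic directions.
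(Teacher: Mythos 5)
Your proposal is correct and is essentially identical to the paper's argument: both take the trace of the identity $U(0)^2+\dot{U}(0)=\lambda(0)\I_{p,q}$ at $t=0$, use $\tr(U(t))\equiv 0$ to kill $\tr(\dot{U}(0))$ and $\tr(\I_{p,q})=q-p$ to get $\tr(U^2)=\lambda(0)(q-p)$, which vanishes when $p=q$. This is precisely the paper's equation \cref{eqn:U(t) for SL-1} followed by the same specialization.
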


Next we consider the simplest case where $n=2$ and $p=q =1$, we may write $U(t) = \begin{bmatrix}
a(t) & b(t) \\
c(t) & -a(t)
\end{bmatrix}$ and $U = \begin{bmatrix}
a & b \\
c & -a
\end{bmatrix}$ so that the ODE in \eqref{eqn:U(t) for SL} becomes 
\[
  \begin{bmatrix}
a(t)^2  + b(t)c(t) + \dot{a}(t) & \dot{b}(t) \\
\dot{c}(t) & a(t)^2 + b(t)c(t) - \dot{a}(t)
\end{bmatrix} = \lambda(t) \begin{bmatrix}
-1 & 0 \\
0 & 1
\end{bmatrix}.
\]
This implies that $b(t) = b, c(t) = c$ and $a(t)^2 = -bc$. Hence we may conclude that 
 \begin{proposition}
An embedded geodesic $\gamma$ on $\SL(2,\mathbb{R})$ passing through $A$ with tangent direction $U = \begin{bmatrix}
a & b \\
c & -a
\end{bmatrix}$ exists if and only if $a^2 + bc = 0$. Moreover, if such $\gamma$ exists, then it is unique and 
\[
\gamma(t) = A \exp\left(\begin{bmatrix}
a & b \\
c & -a
\end{bmatrix}t\right). 
\]
The square of the speed of $\gamma(t)$ is $2a^2 + b^2 + c^2$ and the energy $E(t)$ is $t(2a^2 + b^2 + c^2)$.
\end{proposition}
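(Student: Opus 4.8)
The plan is to specialize the general description of geodesics on matrix Lie groups (\cref{lemma:U(t)} and \cref{prop:U(t)}) together with the semi-normal space of $\SL(n,\mathbb{R})$ computed in \cref{prop:semi-normal SL} to the case $n=2$, $p=q=1$, and then solve the resulting matrix ODE by hand. By \cref{prop:U(t)}, any geodesic through $A$ with direction $AU$ has the form $\gamma(t)=A\exp(\int_0^t U(\tau)\,d\tau)$ for a curve $U(t)$ in $\mathfrak{sl}(2,\mathbb{R})$ with $U(0)=U$, and by \cref{prop:semi-normal SL} the constraint \cref{eqn:U(t) for SL} reads $U(t)^2+\dot U(t)=\lambda(t)\I_{1,1}$ for some scalar function $\lambda(t)$. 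Writing $U(t)$ in traceless form and invoking the Cayley--Hamilton identity $U(t)^2=(a(t)^2+b(t)c(t))\,I$ reduces this to the entrywise system displayed just before the statement.

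I would then read off the solution. The off-diagonal equations force $\dot b=\dot c=0$, so $b,c$ are constant; adding the two diagonal equations (whose right-hand sides cancel) gives $a(t)^2+bc=0$ for all $t$, while subtracting them yields $\dot a(t)=-\lambda(t)$. Since $a(t)^2=-bc$ is constant and $a$ is continuous, $a(t)$ cannot jump between the two branches $\pm\sqrt{-bc}$, so $a(t)\equiv a(0)=a$; hence $\dot a\equiv 0$, $\lambda\equiv 0$, and $U(t)\equiv U$. This simultaneously yields the existence dichotomy: evaluating $a(t)^2=-bc$ at $t=0$ forces $a^2+bc=0$, so no solution exists when $a^2+bc\neq 0$, whereas when $a^2+bc=0$ the constant curve $U(t)\equiv U$ (with $\lambda\equiv 0$) satisfies every constraint. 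Feeding $U(t)\equiv U$ back into \cref{prop:U(t)} gives $\gamma(t)=A\exp(Ut)$, and uniqueness follows because the argument pins $U(t)$ down uniquely.

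As a consistency check I would note that $a^2+bc=0$ makes $U$ nilpotent ($U^2=0$), so $\exp(Ut)=I+tU$ and $\gamma$ is the affine line $t\mapsto A+tAU$ with $\ddot\gamma\equiv 0\in\SN_{\gamma(t)}(\SL(2,\mathbb{R}),\GL(2,\mathbb{R}))$, confirming the geodesic property of \cref{defn:geodesic}; moreover $\det\gamma(t)=\det A\cdot e^{t\,\tr U}=1$ keeps the curve in $\SL(2,\mathbb{R})$. For the speed and energy I would use that $\dot\gamma(t)=\gamma(t)U$, so $\gamma(t)^{-1}\dot\gamma(t)=U$ is independent of $t$. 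Measuring with the left-invariant Riemannian metric $(U,V)_A=\tr((A^{-1}U)^{\mathsf T}A^{-1}V)$ introduced just before \cref{prop:normal v.s. semi-normal}, the squared speed is the constant $\tr(U^{\mathsf T}U)=2a^2+b^2+c^2$, so the energy $E(t)=\int_0^t \tr(U^{\mathsf T}U)\,d\tau=t(2a^2+b^2+c^2)$ is linear in $t$, exactly as claimed.

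I expect the only genuinely delicate point to be the continuity argument for $a(t)$: converting the pointwise algebraic relation $a(t)^2=-bc$ into $a(t)\equiv a$, and thereby into the sharp existence criterion $a^2+bc=0$, is what rules out spurious non-constant solutions of the ODE and is the step where the ``if and only if'' is really earned; everything else is bookkeeping on $2\times 2$ matrices.
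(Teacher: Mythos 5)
Your proposal is correct and follows essentially the same route as the paper: both specialize the general matrix-Lie-group geodesic equation $U(t)^2+\dot{U}(t)=\lambda(t)\I_{1,1}$ to traceless $2\times 2$ matrices and solve the resulting entrywise system, forcing $b,c$ constant and $a(t)^2=-bc$, which yields the criterion $a^2+bc=0$ and $\gamma(t)=A\exp(tU)$. Your continuity argument pinning down $a(t)\equiv a$ (hence $\lambda\equiv 0$ and $U(t)\equiv U$), together with the norm computation $\tr(U^{\mathsf{T}}U)=2a^2+b^2+c^2$ for the speed and energy, simply makes explicit steps that the paper leaves implicit in its ``this implies'' conclusion.
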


\subsubsection{Symplectic Group} We recall that the symplectic group $\Sp(2n,\mathbb{R})$ is the group of $(2n) \times (2n)$ matrices $A$ satisfying
\[
A^\mathsf{T} \J_n A = \J_n
\]
where $\J_n = \begin{bmatrix}
0 & \I_n \\
-\I_n & 0
\end{bmatrix}$. The Lie algebra of $\Sp(2n,\mathbb{R})$ is 
\[
\mathfrak{sp}(n) = \left\lbrace \J_nS: S\in \operatorname{S}^2\mathbb{R}^{2n} \right\rbrace
\]
and the normal space is
\[
\N_{\I_{2n}}(\Sp(2n,\mathbb{R}),\GL(2n,\mathbb{R})) =  \left\lbrace \J_n \Delta: \Delta \in \bigwedge^2 \mathbb{R}^{2n} \right\rbrace.
\]
Without loss of generality, we may assume that $1\le p \le n$. 
\begin{proposition}
For each $A\in \Sp(2n,\mathbb{R})$, we have 
\[
\SN_A(\Sp(2n,\mathbb{R}),\GL(2n,\mathbb{R})) = \left\lbrace A \I_{p,q} \J_n \Delta: \Delta \in \bigwedge^2 \mathbb{R}^{2n} \right\rbrace
\]
and 
\[
\Sp(2n,\mathbb{R})_{A}^\perp = \left\lbrace 
A\begin{bmatrix}
X & Y & 0 & Z^\mathsf{T} \\
0 & 0 & Z &   0\\
0 & 0 & X^\mathsf{T} & 0 \\
0 & 0 &  Y^\mathsf{T} & 0 
\end{bmatrix}: X\in \mathbb{R}^{p\times p},Y\in \mathbb{R}^{p \times (n-p)},Z\in \mathbb{R}^{(n-p) \times p}
\right\rbrace.
\]
In particular, $\Sp(2n,\mathbb{R})$ is a degenerate semi-Riemannian sub-manifold of $\GL(2n,\mathbb{R})$.
\end{proposition}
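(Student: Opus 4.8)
The plan is to derive both displayed identities from the left-invariant machinery already in place, reducing every computation to the identity element $\I_{2n}\in\Sp(2n,\mathbb{R})$. For the first identity I would begin from the normal space $\N_{\I_{2n}}(\Sp(2n,\mathbb{R}),\GL(2n,\mathbb{R}))=\{\J_n\Delta:\Delta\in\bigwedge^2\mathbb{R}^{2n}\}$ recorded immediately above the statement. \cref{prop:normal v.s. semi-normal} converts this into $\SN_{\I_{2n}}(\Sp(2n,\mathbb{R}),\GL(2n,\mathbb{R}))=\I_{p,q}\N_{\I_{2n}}=\{\I_{p,q}\J_n\Delta:\Delta\in\bigwedge^2\mathbb{R}^{2n}\}$, and \cref{lemma:semi-normal left invariant} left-translates this along $A$ to yield the claimed formula $\SN_A(\Sp(2n,\mathbb{R}),\GL(2n,\mathbb{R}))=\{A\I_{p,q}\J_n\Delta:\Delta\in\bigwedge^2\mathbb{R}^{2n}\}$. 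This part is pure bookkeeping once the two cited results are invoked.

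For the degenerate bundle I would use \cref{lemma:SN v.s. X perp}, which gives $\Sp(2n,\mathbb{R})_A^\perp=\SN_A(\Sp(2n,\mathbb{R}),\GL(2n,\mathbb{R}))\cap\T_A\Sp(2n,\mathbb{R})$. Since $\SN_A=A\,\SN_{\I_{2n}}$ and $\T_A\Sp(2n,\mathbb{R})=A\mathfrak{sp}(n)$, and left multiplication by the invertible matrix $A$ commutes with intersection, it suffices to compute the intersection at the identity, $\Sp(2n,\mathbb{R})_{\I_{2n}}^\perp=\SN_{\I_{2n}}\cap\mathfrak{sp}(n)$, and then set $\Sp(2n,\mathbb{R})_A^\perp=A\cdot\Sp(2n,\mathbb{R})_{\I_{2n}}^\perp$. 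To perform this intersection I would parametrize a general element of $\mathfrak{sp}(n)$ as $M=\J_n S$ with $S\in\operatorname{S}^2\mathbb{R}^{2n}$; the membership $M\in\SN_{\I_{2n}}=\I_{p,q}\J_n\bigwedge^2\mathbb{R}^{2n}$ is then equivalent to $\J_n\I_{p,q}\J_n S$ being skew-symmetric. Writing $K\coloneqq\J_n\I_{p,q}\J_n$, which is symmetric, and using the symmetry of $S$, this skewness condition collapses to the anticommutator equation $KS+SK=0$.

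The decisive observation is that $K$ is a diagonal sign matrix. A short block computation using the given form of $\J_n$ and $\I_{p,q}=\operatorname{diag}(\I_{p,n-p},\I_n)$ --- where the hypothesis $1\le p\le n$ is exactly what places all negative signs inside the first $n$-block --- yields $K=-\operatorname{diag}(\I_n,\I_{p,n-p})$, whose only positive diagonal entries occupy the third block of the refined $(p,n-p,p,n-p)$ partition. For a diagonal $\pm1$ matrix $K$, the equation $KS+SK=0$ forces $s_{ij}=0$ whenever $K_{ii}=K_{jj}$, so $S$ is supported exactly on the index pairs linking the single positive block of $K$ to its three negative blocks. Imposing this support pattern and re-multiplying by $\J_n$ rearranges the surviving blocks into precisely the displayed matrix, with free blocks $X\in\mathbb{R}^{p\times p}$, $Y\in\mathbb{R}^{p\times(n-p)}$, $Z\in\mathbb{R}^{(n-p)\times p}$ (the exact signs of these blocks being immaterial, since they range over all matrices of their stated sizes).

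I expect the main obstacle to be precisely this last block bookkeeping: pinning down the sign pattern of $K$ under the chosen $\I_{p,q}$ and checking that left multiplication by $\J_n$ carries the surviving entries of $S$ into exactly the positions claimed in the statement. Everything else is a direct application of \cref{lemma:semi-normal left invariant}, \cref{prop:normal v.s. semi-normal}, and \cref{lemma:SN v.s. X perp}. Once the block form is secured, the concluding assertion is immediate: the block $X$ is an unconstrained $p\times p$ matrix with $p\ge1$, so $\Sp(2n,\mathbb{R})_A^\perp\neq\{0\}$, and therefore $\Sp(2n,\mathbb{R})$ is a degenerate semi-Riemannian submanifold of $\GL(2n,\mathbb{R})$.
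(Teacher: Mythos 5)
Your overall route is the same as the paper's: the paper's own proof consists precisely of invoking \cref{lemma:semi-normal left invariant} and \cref{prop:normal v.s. semi-normal} for the semi-normal space and declaring the degenerate bundle ``a straightforward calculation,'' and your reduction to the identity, the parametrization $\mathfrak{sp}(n)=\{\J_n S: S\in\operatorname{S}^2\mathbb{R}^{2n}\}$, the identity $K=\J_n\I_{p,q}\J_n=-\diag(\I_n,\I_{p,n-p})$, the anticommutator reduction $KS+SK=0$, and the resulting support pattern of $S$ are all correct. The genuine flaw is the closing parenthetical ``the exact signs of these blocks being immaterial, since they range over all matrices of their stated sizes.'' That is false: $X$ and $Y$ each occur \emph{twice} in the displayed matrix, once as a block and once as its transpose, so the relative sign between the two occurrences is a structural linear constraint, not a relabeling freedom --- substituting $X\mapsto -X$ flips both occurrences simultaneously and cannot change the sign pattern.

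Carrying your own computation through honestly, with $S$ supported on the blocks coupling the third (positive) block of $K$ to the other three, one gets
\[
\J_n S=\begin{bmatrix}
X & Y & 0 & Z^\mathsf{T}\\
0 & 0 & Z & 0\\
0 & 0 & -X^\mathsf{T} & 0\\
0 & 0 & -Y^\mathsf{T} & 0
\end{bmatrix},
\]
i.e.\ the stated set with $X^\mathsf{T},Y^\mathsf{T}$ replaced by $-X^\mathsf{T},-Y^\mathsf{T}$; these are genuinely different subspaces, and it is the printed display that carries the sign typo. Indeed the set as printed is not even contained in the tangent space: membership in $\mathfrak{sp}(2n,\mathbb{R})$ requires $\J_n M$ symmetric, which for the printed sign pattern forces $X=Y=0$. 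A concrete check: for $n=p=1$ one has $\Sp(2,\mathbb{R})=\SL(2,\mathbb{R})$, and your computation correctly yields the degenerate space $\{\diag(s,-s):s\in\mathbb{R}\}$ (traceless, and equal to $\SN_{\I_2}$, consistent with \cref{prop:semi-normal SL} for $p=q$), whereas the printed form would give $\diag(s,s)\notin\mathfrak{sl}(2,\mathbb{R})$. So replace the sign-immateriality claim by the corrected display (or flag the typo explicitly); the final degeneracy conclusion survives unchanged, since the corrected space still has dimension $p^2+2p(n-p)>0$.
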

\begin{proof}
The description of the semi-normal space follows from Lemma \ref{lemma:semi-normal left invariant} and Proposition \ref{prop:normal v.s. semi-normal} and the description of $\Sp(2n,\mathbb{R})_A^\perp$ is obtained by a straightforward calculation.
\end{proof}

Next we describe geodesics on $\Sp(2n,\mathbb{R})$.
\begin{proposition}
Let $\gamma(t)$ be a geodesic passing through $A\in \Sp(2n,\mathbb{R})$ with the tangent direction $A\J_n X$. If we write $X$ as $X = \begin{bmatrix}
S & P^\mathsf{T} \\
P & Q
\end{bmatrix}$ where $S,Q$ are symmetric $n\times n$ matrices and $P$ is an $n\times n$ matrix. We also partition $S,P,Q$ as 
\[
S = \begin{bmatrix}
S_{11} & S_{12} \\
S_{12}^\mathsf{T} & S_{22}
\end{bmatrix},\quad
P = \begin{bmatrix}
P_{11} & P_{12} \\
P_{21} & P_{22}
\end{bmatrix},\quad Q = \begin{bmatrix}
Q_{11} & Q_{12} \\
Q_{12}^\mathsf{T} & Q_{22}
\end{bmatrix}
\]
where $S_{11},P_{11},Q_{11}\in \mathbb{R}^{p\times p}$ and $S_{22},P_{22},Q_{22}\in \mathbb{R}^{(n-p) \times (n-p)}$. Then 
\[
\gamma(t) =A \exp(\int_0^t \begin{bmatrix}
P(\tau) & Q(\tau) \\
- S & -P(\tau)^\mathsf{T}
\end{bmatrix}d \tau,
\]
where $P(t) = \begin{bmatrix}
P_{11}(t) & P_{12}(t) \\
P_{21} & P_{22}
\end{bmatrix}$, $Q(t) = \begin{bmatrix}
Q_{11} & Q_{12}(t) \\
Q_{12}(t)^\mathsf{T} & Q_{22}
\end{bmatrix}$ and 
\[
\begin{cases}
Q_{11}S_{11} + Q_{12}(t)S_{12}^\mathsf{T} = P_{11}(t)^2 + P_{12}(t) P_{21}, \\
Q_{11}S_{12} + Q_{12}(t) S_{22} = P_{11}(t) P_{12}(t) + P_{12}(t) P_{22},\\
(P_{21}Q_{11} + P_{22}P_{12}(t)^\mathsf{T}) - (Q_{12}(t)^\mathsf{T}P_{11}(t)^\mathsf{T} + Q_{22}P_{12}(t)^\mathsf{T}) + \dot{Q}_{12}(t)^\mathsf{T}=0.
\end{cases}
\]
\end{proposition}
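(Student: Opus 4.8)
The plan is to specialize the general geodesic characterization for matrix Lie groups, established in \cref{lemma:U(t)} and \cref{prop:U(t)}, to $G=\Sp(2n,\mathbb{R})$. Those results assert that a geodesic through $A$ with tangent direction $A\J_n X$ has the form $\gamma(t)=A\exp\bigl(\int_0^t U(\tau)\,d\tau\bigr)$, where $U(t)$ is the unique curve in $\mathfrak{sp}(n)$ with $U(0)=\J_n X$ and $U(t)^2+\dot U(t)\in\SN_{\I_n}(\Sp(2n,\mathbb{R}),\GL(2n,\mathbb{R}))$. Since every element of $\mathfrak{sp}(n)$ equals $\J_n Y$ for a symmetric $Y$, I would parametrize $U(t)=\begin{bmatrix}P(t)&Q(t)\\-S(t)&-P(t)^\mathsf{T}\end{bmatrix}$ with $S(t),Q(t)$ symmetric, matching the initial datum $U(0)=\J_n X=\begin{bmatrix}P&Q\\-S&-P^\mathsf{T}\end{bmatrix}$.

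The crux is to convert the membership $U(t)^2+\dot U(t)\in\SN_{\I_n}$ into equations on the blocks. Using the description $\SN_{\I_n}=\{\I_{p,q}\J_n\Delta:\Delta\in\bigwedge^2\mathbb{R}^{2n}\}$ from the preceding proposition on the semi-normal space of $\Sp(2n,\mathbb{R})$, and the invertibility of $\I_{p,q}\J_n$, this membership is equivalent to requiring $\J_n\I_{p,q}\bigl(U(t)^2+\dot U(t)\bigr)$ to be skew-symmetric. This is the exact symplectic analogue of the condition $\operatorname{Skew}(U(t)^2+\dot U(t))=0$ that drives the $\O(p,q)$ computation in \cref{lemma: characterization of geodesics}. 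I would then expand $U(t)^2+\dot U(t)$ in the $2\times 2$ block structure coming from $\J_n$, further refine each $n\times n$ block $S,P,Q$ into sub-blocks of sizes $p$ and $n-p$ dictated by $\I_{p,q}$, and read off the constraints imposed by skew-symmetry.

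I expect the block bookkeeping to be the main obstacle, since the computation lives at two nested levels of partition and $\I_{p,q}$ does not commute with $\J_n$, so that forming $\J_n\I_{p,q}(U(t)^2+\dot U(t))$ couples the blocks nontrivially. Carrying this out, the skew-symmetry requirement should annihilate the time-derivatives of $S$, $P_{21}$, $P_{22}$, $Q_{11}$, and $Q_{22}$---forcing them to remain at their initial values, exactly as $\dot\Delta_1=\dot\Delta_3=0$ in the orthogonal case---while the surviving blocks $P_{11}(t)$, $P_{12}(t)$, and $Q_{12}(t)$ are constrained by the two algebraic relations together with the single first-order relation (the one carrying $\dot Q_{12}$) displayed in the statement. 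Substituting the resulting $U(t)$ back into $\gamma(t)=A\exp\bigl(\int_0^t U(\tau)\,d\tau\bigr)$ then yields the claimed formula, after a final routine check that $U(t)$ stays in $\mathfrak{sp}(n)$ and meets the initial conditions.
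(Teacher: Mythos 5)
Your overall strategy is sound, and in fact the paper offers \emph{no} proof of this proposition at all, so the relevant benchmark is the template it uses for $\O(p,q)$ in \cref{lemma: characterization of geodesics} --- which is exactly what you propose. Your setup is correct: by \cref{lemma:U(t)} and \cref{prop:U(t)} one writes $\gamma(t)=A\exp\bigl(\int_0^t U(\tau)\,d\tau\bigr)$ with $U(t)=\left[\begin{smallmatrix} P(t) & Q(t)\\ -S(t) & -P(t)^\mathsf{T}\end{smallmatrix}\right]\in\mathfrak{sp}(n)$, and since $(\I_{p,q}\J_n)^{-1}=-\J_n\I_{p,q}$, the membership $W:=U^2+\dot U\in\SN_{\I_{2n}}(\Sp(2n,\mathbb{R}),\GL(2n,\mathbb{R}))$ is indeed equivalent to skewness of $\J_n\I_{p,q}W$ (the sign is immaterial). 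With $\I_{p,q}=\diag(E,\I_n)$, $E=\diag(-\I_p,\I_{n-p})$, one gets $\J_n\I_{p,q}W=\left[\begin{smallmatrix} W_{21} & W_{22}\\ -EW_{11} & -EW_{12}\end{smallmatrix}\right]$, and skewness splits into three conditions: $W_{21}+W_{21}^\mathsf{T}=0$, which gives $\dot S=0$; $W_{22}=W_{11}^\mathsf{T}E$, which (since $\I-E=\diag(2\I_p,0)$ and $\I+E=\diag(0,2\I_{n-p})$) gives $\dot P_{21}=\dot P_{22}=0$ together with $(P^2-QS)_{11}=0$ and $(P^2-QS)_{12}=0$ --- precisely the first two displayed equations; and $EW_{12}+W_{12}^\mathsf{T}E=0$. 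So your predictions about which blocks freeze, and the first two algebraic relations, are exactly what the computation delivers.

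The one step that will not go as you expect is the last one. Writing $W_{12}=N+\dot Q$ with $N:=PQ-QP^\mathsf{T}$ skew and $\dot Q$ symmetric, the condition becomes $(EN-NE)+(E\dot Q+\dot QE)=0$; the first summand is block \emph{anti}-diagonal and the second is block diagonal, so the off-diagonal contributions of $\dot Q_{12}$ in $E\dot Q$ and $\dot QE$ cancel identically. The condition therefore decouples into $\dot Q_{11}=\dot Q_{22}=0$ and the \emph{purely algebraic} relation $N_{12}=P_{11}Q_{12}+P_{12}Q_{22}-Q_{11}P_{21}^\mathsf{T}-Q_{12}P_{22}^\mathsf{T}=0$; no first-order relation in $\dot Q_{12}$ survives. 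Hence your expectation of ``the single first-order relation (the one carrying $\dot Q_{12}$) displayed in the statement'' cannot be realized: the printed third equation appears to be corrupted, and its corrected transpose should read $(P_{21}Q_{11}+P_{22}Q_{12}^\mathsf{T})-(Q_{12}^\mathsf{T}P_{11}^\mathsf{T}+Q_{22}P_{12}^\mathsf{T})=0$, with $P_{22}Q_{12}^\mathsf{T}$ in place of $P_{22}P_{12}^\mathsf{T}$ and without the $\dot Q_{12}^\mathsf{T}$ term. (That this appendix contains transcription errors is independently verifiable: the displayed generators of $\Sp(2n,\mathbb{R})_A^\perp$ lie in $\mathfrak{sp}(2n,\mathbb{R})$ only if $X=Y=0$; their $(3,3)$ and $(4,3)$ blocks should be $-X^\mathsf{T}$ and $-Y^\mathsf{T}$.) Two further points would make your write-up complete: the three algebraic relations must already hold at $t=0$, so --- as in \cref{prop:geodesic O(n)}, and unlike \cref{prop:characterization-of-geodesics-indef} --- they restrict which initial directions $A\J_nX$ admit geodesics at all; and since the constraints are algebraic rather than an ODE for $P_{11},P_{12},Q_{12}$, they do not single out a unique $U(t)$, so the uniqueness in \cref{lemma:U(t)} (which presupposes a given $\gamma$) should not be read as uniqueness of the geodesic.
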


\end{document}